\newcommand*{\DashedArrow}[1][]{\mathbin{\tikz [baseline=-0.25ex,-latex, dashed,#1] \draw [#1] (0pt,0.5ex) -- (1.3em,0.5ex);}}%
\newtheorem{theorem}{Theorem}[section]
\newtheorem{lemma}[theorem]{Lemma}
\newtheorem{proposition}[theorem]{Proposition}
\newtheorem{question}[theorem]{Question}
\theoremstyle{definition}
\newtheorem{remark}[theorem]{Remark}
\newtheorem{example}[theorem]{Example}
\numberwithin{equation}{section}
\newenvironment{customthm}[1]
  {\innercustomthm}
  {\endinnercustomthm}
\DeclareMathOperator{\rk}{Rk}
\DeclareMathOperator{\Cr}{cr}
\DeclareMathOperator{\brk}{brk}
\DeclareMathOperator{\nullity}{nullity}
\DeclareMathOperator{\HF}{HF}
\title[Waring decompositions of the product of two quadrics]{Waring decompositions of the product of two quadrics: the small rank cases}
\author[Bhat]{Meghana Bhat}
\address[Meghana Bhat]{Department of Mathematics, Indian Institute of Technology Dharwad, Permanent Campus, Chikkamalligwad,  Dharwad - 580011, Karnataka, India}
\email{mbhat.math@gmail.com}
\author[Carlini]{Enrico Carlini}
\address[Enrico Carlini]{DISMA Department of Mathematical Sciences, Politecnico Di Torino,  Corso Duca degli Abruzzi, 24, Torino - 10129, Italy}
\email{enrico.carlini@polito.it}
\author[Dubey]{Saipriya Dubey}
\address[Saipriya Dubey]{Chennai Mathematical Institute, Plot H1 SIPCOT IT Park, Siruseri, Kelambakkam - 603103, Tamil
Nadu, India}
\email{saipriya721@gmail.com}
\author[Masuti]{Shreedevi K. Masuti}
\address[Shreedevi K. Masuti]{Department of Mathematics, Indian Institute of Technology Dharwad, Permanent Campus, Chikkamalligwad,  Dharwad - 580011, Karnataka, India}
\email{shreedevi@iitdh.ac.in}
\thanks{MB is supported by Prime Minister's Research Fellowship (PMRF), Govt. of India. EC acknowledge that this study was carried out within the PRIN 2022 PNRR project grant P2022E2Z4AK “0-Dimensional Schemes, Tensor Theory, and Applications” funded by European Union– Next Generation EU within the PRIN 2022 program (D.D. 104- 02/02/2022 Ministero dell’Università e della Ricerca).
SD is partially supported by a grant from the Infosys Foundation. SKM is supported by
CRG grant CRG/2022/007572 and MATRICS grant MTR/2022/000816  funded by SERB,  Govt. of India. The project is partially supported by SPARC grant SPARC/2019-2020/P1566/SL}
\date{\today}
\subjclass[2020]{Primary: 14N07 ; Secondary:  13D40}
\keywords{Waring rank, Sum of powers decomposition, Apolar sets, Hilbert function}
\begin{document}

\begin{abstract}
    {In this paper we study forms of the type $(x_1^2+ \cdots +x_m^2)(y_1^2+ \cdots+y_n^2)$ using projections. For $m=1, m=2$, and for any $n$ we describe: the geometry, the forbidden locus, the Hilbert function, and the ideal of all minimal apolar sets. {Further, we compute the cactus rank, a bound on the border rank, and the dimension of the Variety of Sums of Powers.} For $m,n \geq 3,$ we provide new lower and upper bounds for the Waring rank. }
\end{abstract}
\maketitle


\section{Introduction}
Let $S=\mathbb{C}[x_0, \ldots, x_n]$ be the standard graded polynomial ring, and let  $F \in S$ be a homogeneous polynomial of degree $d$, also known as a degree $d$ form. It is well known that there exist $L_1, \ldots, L_r \in S_1$ such that \[F=\sum_{i=1}^{r}\alpha_iL_i^d,\]  for some $\alpha_i \in \mathbb{C},$ $1 \leq i \leq r.$
Such a decomposition of $F$ is called {\it Waring decomposition} or {\it sum of powers decomposition}.  The least integer $r$ such that there exists a Waring decomposition of $F$ with exactly $r$ summand is  called {\it Waring rank} of $F,$ denoted by $\rk F.$
\begin{question}\label{Question:ComputeWaringRank}
    Given $F\in S_d,$ compute the Waring rank and a Waring decomposition of $F$.
\end{question}
 This problem is a specific instance of the broader problem of tensor decomposition. Since homogeneous polynomials can be identified with symmetric tensors, Waring decompositions of forms are  equivalent to decompositions of symmetric tensors into rank one {symmetric} terms. Consequently, Question \ref{Question:ComputeWaringRank} is highly relevant to fields relying on efficient tensor decompositions, such as Algebraic Statistics, Signal Processing \cite{Lan12}, Quantum entanglement \cite{BC12, QuantumInformationTheory}, and Complexity Theory \cite{NPhard-TensorRank, Shitov16}, {for example in the study of complexity of matrix multiplications \cite{polynomialMatrixMultiplication18,Lan14GCT,Lan22,LO15}, and more generally in circuit complexity \cite{DGIJL26, Kumar20, Survey}.}
One of the most celebrated result in the Waring rank  problems is due to J. Alexander and A. Hirschowitz where they  determined the rank of a generic form \cite{AH95}. 
{However, despite our understanding of the generic case, computing the Waring rank of a specific polynomial remains a challenging problem. In fact, in \cite{Shitov16} it is proved that computing the Waring rank is an NP-hard problem.}

Significant progress has been made for specific families of polynomials. For example, the Waring rank of binary forms was classically known due to Sylvester. In \cite{sylvester1851lx,Sylvester1851} an  algorithm to compute the Waring rank of binary forms is presented. Similarly, the Waring rank of a quadratic form in any number of variables is classically known, as it coincides with the rank of the associated symmetric matrix.

In more recent years, the rank has been determined for several other families:
the Waring rank of monomials are obtained in \cite{BBT13, CCG12}. The waring rank of binary binomials, and of elementary symmetric polynomials are computed in \cite{BM21} and \cite{Lee16}. 
The study of reducible forms has also been a subject of long-standing interest due to their importance in analysis. While the rank of a quadratic form itself is elementary, the ranks of its powers are far more intricate and have been studied extensively in \cite{Flavi25, Rez92}.
The Waring rank of reducible cubic forms was specifically investigated in \cite{CVG16}. More recently, the rank of broader classes of reducible forms has been explored in \cite{GG24}.

{
Given a homogeneous form $F \in S_d$, even when the $\rk F$ is known, it does not give any information on which linear forms  appear in the Waring decomposition. For $L = \sum_{i=0}^na_ix_i \in S_1$ we associate a point $P=[a_0: \cdots:a_n] \in \mathbb{P}^n$. Given a Waring decomposition  $F= \sum_{i=1}^rL_i^d$, hence we associate a set of points $\mathbb{X}=\{P_1, \ldots, P_r\}$ to $F$ where each $P_i$ corresponds to $L_i$ for $1 \leq i \leq r.$ In this case we call $\mathbb{X}$, a set of apolar points to $F$. Further, $\mathbb{X}$ is said to be a minimal apolar set if $|\mathbb{X}| = \rk F$.
\begin{question}\label{Question:Structure}
    Given $F\in S_d,$ describe algebraic and geometric properties of the {minimal apolar sets}  of $F$.
\end{question}
In \cite{BBT13}, the authors proved that any minimal apolar scheme to a monomial is a complete intersection. A systematic investigation of these sets was initiated in \cite{ForbiddenLocus}, where the authors introduced the concepts of Waring loci and forbidden loci. In this paper, we continue that line of inquiry by focusing on the product of two quadratic forms. Note that in {\cite[Proposition 4.4]{CCCGW18}} it was proved that $\rk  x^2(y_1^2+\ldots+y_n^2)=3n$, while the value of $\rk \, (x_1^2+x_2^2)(y_1^2+\ldots+y_n^2)$ was unknown. Our main results are the following theorems.}

\begin{customthm}{A}\label{thm:A}
    Let $F=x^2(y_1^2+\cdots+y_n^2)$ with $n\geq 2$. If $\mathbb{X}$ is a set of $\rk F=3n$ distinct points apolar to $F$, then the following facts hold:
    \begin{itemize}
        \item[i)] The forbidden locus of $F$ is
        \[
        \mathcal{F}_F=V(Y_1^2+\cdots+Y_n^2).
        \]
        \item[ii)] If $\mathbb{Y}$ is the projection of $\mathbb{X}$ from $V(Y_1,\ldots,Y_n)$ on $V(X)$, then $\mathbb{Y}$ is a set of $n$ distinct points in linear general position, that is
        \[
        \begin{array}{r|c|c|c}
        i & 0 & 1 & 2 \\
        \hline
        \HF(\mathbb{Y},i) & 1 & n & n \\
             
        \end{array},
        \]
        and $\HF(\mathbb{Y},i)=n$ for $i\geq 3$.
        \item[iii)] Let $\mathbb{Y}=\{Q_1,\ldots,Q_n\}$ {and let $P = V(Y_1, \ldots, Y_n) = [1:0: \cdots:0]$}. If $\Lambda_i=\langle  Q_i,P \rangle,$  where $\langle Q_i,P\rangle$ denotes the linear span of $Q_i$, then $\mathbb{X}\cap\Lambda_i$ is a set of three distinct points for $1\leq i\leq n$.
        \item[iv)] The Hilbert function of $\mathbb{X}$ is
        \[
        \begin{array}{r|c|c|c|c}
        i & 0 & 1 & 2 & 3 \\
        \hline
        \HF(\mathbb{X},i) & 1 & n & 2n+1 & 3n \\
             
        \end{array},
        \]
        and $\HF(\mathbb{X},i)=3n$ for $i\geq 3$.
        \item[v)] {If $\mathbb{X}$ is a minimal apolar set to $F$, then there exists $\sigma \in O(n)$, the orthogonal group on the variables $Y_1, \ldots, Y_n$, such that \[
        I(\sigma(\mathbb{X}))=(\{Y_iY_j\}_{1\leq i<j\leq n},\, X^3+ \sum_{i=2}^nL_i(Y_1^2-Y_i^2))
        \]}
        for {some} linear forms $L_i\in \mathbb{C}[X, Y_1, \ldots, Y_n], 2 \leq i \leq n$. 
    \end{itemize}
\end{customthm}
 \begin{figure}[H]
 \centering
    \def\svgwidth{0.5\textwidth}
\begingroup%
  \makeatletter%
  \providecommand\color[2][]{%
    \errmessage{(Inkscape) Color is used for the text in Inkscape, but the package 'color.sty' is not loaded}%
    \renewcommand\color[2][]{}%
  }%
  \providecommand\transparent[1]{%
    \errmessage{(Inkscape) Transparency is used (non-zero) for the text in Inkscape, but the package 'transparent.sty' is not loaded}%
    \renewcommand\transparent[1]{}%
  }%
  \providecommand\rotatebox[2]{#2}%
  \newcommand*\fsize{\dimexpr\f@size pt\relax}%
  \newcommand*\lineheight[1]{\fontsize{\fsize}{#1\fsize}\selectfont}%
  \ifx\svgwidth\undefined%
    \setlength{\unitlength}{432.39949036bp}%
    \ifx\svgscale\undefined%
      \relax%
    \else%
      \setlength{\unitlength}{\unitlength * \real{\svgscale}}%
    \fi%
  \else%
    \setlength{\unitlength}{\svgwidth}%
  \fi%
  \global\let\svgwidth\undefined%
  \global\let\svgscale\undefined%
  \makeatother%
  \begin{picture}(1,0.62680233)%
    \lineheight{1}%
    \setlength\tabcolsep{0pt}%
    \put(0,0){\includegraphics[width=\unitlength,page=1]{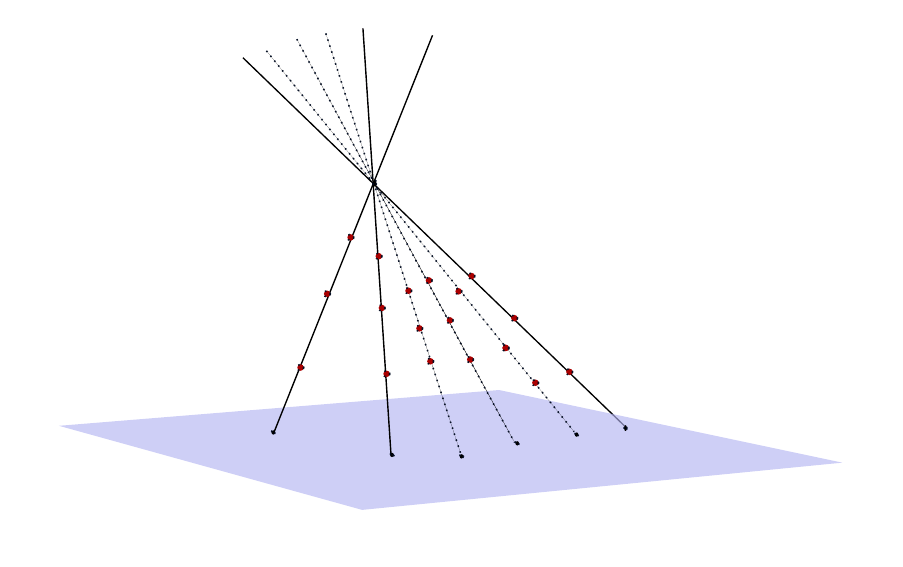}}%
    \put(0.43177932,0.42776651){\color[rgb]{0.10196078,0.10196078,0.10196078}\makebox(0,0)[lt]{\lineheight{1.25}\smash{\begin{tabular}[t]{l}$P$\end{tabular}}}}%
    \put(0.31219731,0.13402815){\color[rgb]{0.10196078,0.10196078,0.10196078}\makebox(0,0)[lt]{\lineheight{1.25}\smash{\begin{tabular}[t]{l}$Q_1$\end{tabular}}}}%
    \put(0.44060753,0.10513586){\color[rgb]{0.10196078,0.10196078,0.10196078}\makebox(0,0)[lt]{\lineheight{1.25}\smash{\begin{tabular}[t]{l}$Q_2$\end{tabular}}}}%
    \put(0.69903305,0.15007944){\color[rgb]{0.10196078,0.10196078,0.10196078}\makebox(0,0)[lt]{\lineheight{1.25}\smash{\begin{tabular}[t]{l}$Q_n$\\\end{tabular}}}}%
    \put(0.86757143,0.08988715){\color[rgb]{0.10196078,0.10196078,0.10196078}\makebox(0,0)[lt]{\lineheight{1.25}\smash{\begin{tabular}[t]{l}V(X)\end{tabular}}}}%
  \end{picture}%
\endgroup%

     \caption{A minimal apolar set of $F=x^2(y_1^2+\cdots+y_n^2)$: red dots represent apolar points contained in $n$-lines with 3 points on each line.}
     \label{Picture:PointsOnLines}
    \end{figure}
\begin{customthm}{B}\label{thm:B}
    Let $F=(x_1^2+x_2^2)(y_1^2+\cdots+y_n^2)$ and $n\geq 2$, then $\rk F = 4n$. If $\mathbb{X}$ is a set of $4n$ distinct points apolar to $F$, then the following facts hold:
    \begin{itemize}
        \item[i)] The forbidden locus of $F$ is given by $$\mathcal{F}_F=V(X_1^2+X_2^2)\cup V(Y_1^2+\cdots+Y_n^2).$$
        \item[ii)] If $\mathbb{Y}$ is the projection of $\mathbb{X}$ from $V(Y_1,\ldots,Y_n)$ on $V(X_1,X_2)$, then $\mathbb{Y}$ is a set of $n$ distinct points in linear general position, that is
        \[
        \begin{array}{r|c|c|c}
        i & 0 & 1 & 2 \\
        \hline
        \HF(\mathbb{Y},i) & 1 & n & n \\
             
        \end{array},
        \]
        and $\HF(\mathbb{Y},i)=n$ for $i\geq 3$.
         
        \item[iii)] Let $\mathbb{Y}=\{Q_1,\ldots,Q_n\}$ and let $\ell=V(Y_1,\ldots,Y_n)$. If $\Lambda_i=\langle \ell, Q_i \rangle$, where $\langle \ell, Q_i \rangle$ denotes the linear span of $\ell$ and $Q_i$, then $\mathbb{X}\cap\Lambda_i$ is a set of four distinct points for $1\leq i\leq n$ contained in two lines passing through $Q_i$.
       
        \item[iv)]  {If $\mathbb{X}$ is a minimal apolar set to $F$, then there exists $\sigma \in O(n)$, the orthogonal group on the variables $Y_1, \ldots, Y_n$, such that \[
        I(\sigma(\mathbb{X}))=(\{Y_iY_j\}_{1\leq i<j\leq n},\mathcal{Q}_1,\mathcal{Q}_2)
        \]}
        where $\mathcal{Q}_1=X_1^2+\sum_{i=2}^n \alpha_i(Y_1^2-Y_i^2)$, $\mathcal{Q}_2=X_2^2+\sum_{i=2}^n \beta_i(Y_1^2-Y_i^2)$ for some $\alpha_i, \beta_i \in \mathbb{C}\setminus\{0\},$ $ 2 \leq i \leq n$ such that $\sum_{i=2}^n\alpha_i \neq 0$, $\sum_{i=2}^n \beta_i \neq 0.$
        \item[v)] If $\mathbb{W}$ is the projection of $\mathbb{X}$ from $V(X_1,X_2)$ on $V(Y_1,\ldots,Y_n)$, then $\mathbb{W}$ is a set of $2k$ points with $k\leq n$.
    \end{itemize}
\end{customthm}
\begin{figure}[H]
\centering
    \def\svgwidth{0.5\textwidth}
\begingroup%
  \makeatletter%
  \providecommand\color[2][]{%
    \errmessage{(Inkscape) Color is used for the text in Inkscape, but the package 'color.sty' is not loaded}%
    \renewcommand\color[2][]{}%
  }%
  \providecommand\transparent[1]{%
    \errmessage{(Inkscape) Transparency is used (non-zero) for the text in Inkscape, but the package 'transparent.sty' is not loaded}%
    \renewcommand\transparent[1]{}%
  }%
  \providecommand\rotatebox[2]{#2}%
  \newcommand*\fsize{\dimexpr\f@size pt\relax}%
  \newcommand*\lineheight[1]{\fontsize{\fsize}{#1\fsize}\selectfont}%
  \ifx\svgwidth\undefined%
    \setlength{\unitlength}{544.99644362bp}%
    \ifx\svgscale\undefined%
      \relax%
    \else%
      \setlength{\unitlength}{\unitlength * \real{\svgscale}}%
    \fi%
  \else%
    \setlength{\unitlength}{\svgwidth}%
  \fi%
  \global\let\svgwidth\undefined%
  \global\let\svgscale\undefined%
  \makeatother%
  \begin{picture}(1,0.86764281)%
    \lineheight{1}%
    \setlength\tabcolsep{0pt}%
    \put(0,0){\includegraphics[width=\unitlength,page=1]{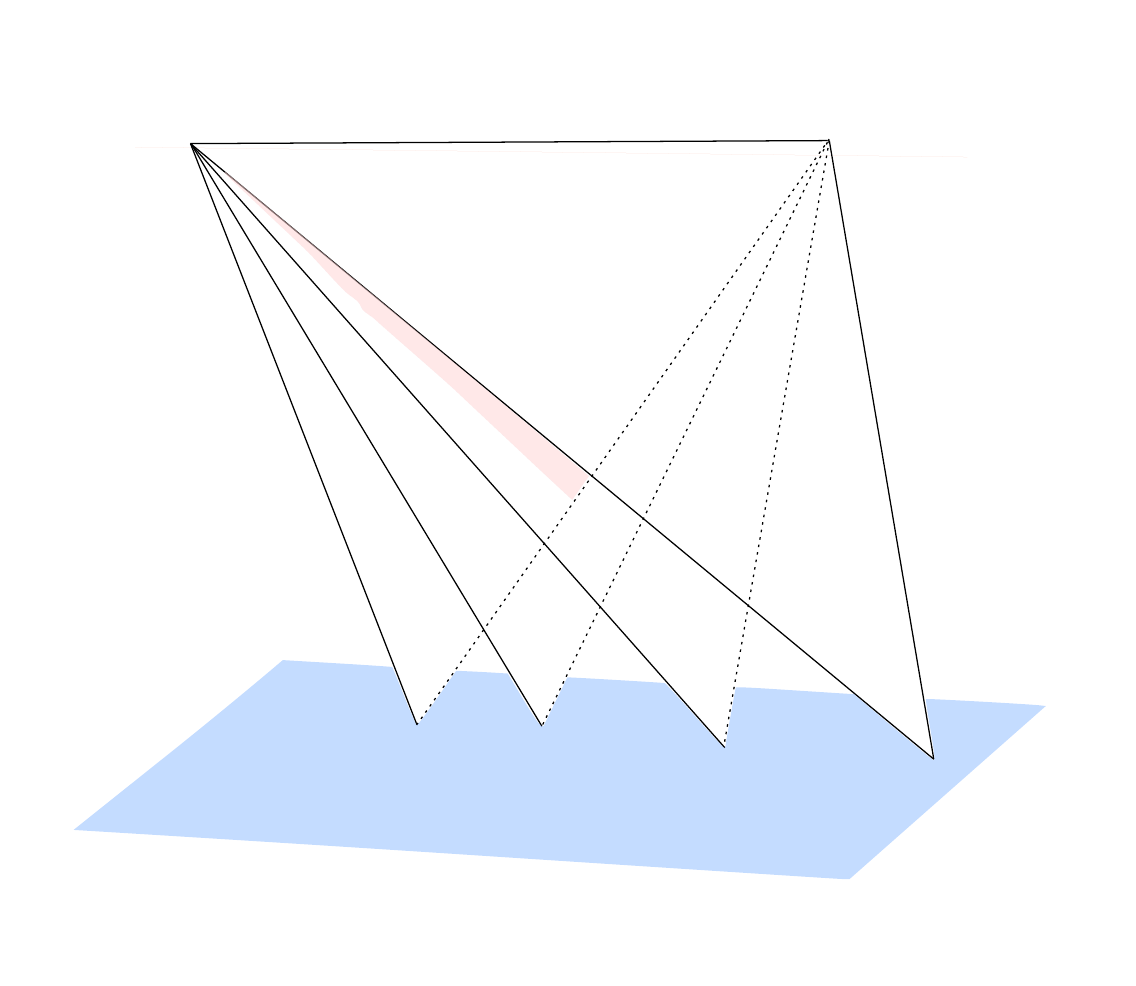}}%
    \put(0.73404207,0.10329408){\color[rgb]{0,0,0}\makebox(0,0)[lt]{\lineheight{1.25}\smash{\begin{tabular}[t]{l}        $V(X_1, X_2)$\end{tabular}}}}%
    \put(0.82630474,0.19420771){\color[rgb]{0,0,0}\makebox(0,0)[lt]{\lineheight{1.25}\smash{\begin{tabular}[t]{l}$Q_n$\end{tabular}}}}%
    \put(0.36634867,0.21037025){\color[rgb]{0,0,0}\makebox(0,0)[lt]{\lineheight{1.25}\smash{\begin{tabular}[t]{l}$Q_1$\end{tabular}}}}%
    \put(0.48150607,0.21239047){\color[rgb]{0,0,0}\makebox(0,0)[lt]{\lineheight{1.25}\smash{\begin{tabular}[t]{l}$Q_2$\end{tabular}}}}%
    \put(0.746166,0.7444042){\color[rgb]{0,0,0}\makebox(0,0)[lt]{\lineheight{1.25}\smash{\begin{tabular}[t]{l}$V(Y_1,Y_2, \ldots , Y_n)$\end{tabular}}}}%
    \put(0,0){\includegraphics[width=\unitlength,page=2]{bookpagesLike5.pdf}}%
  \end{picture}%
\endgroup%

     \caption{A minimal apolar set of $F=(x_1^2+x_2^2)(y_1^2+\cdots+y_n^2)$: black dots represent minimal apolar points contained in $n$-planes with 4 points on each plane.}
     \label{Picture:PointsOnPlane}
    \end{figure}

{We note that our projection approach is not going to work for forms that split as a product in higher degrees. The following example shows minimal apolar subsets that are not projectively equivalent.
\begin{example}
    Let $F=x(y_1^3+y_2^3)$. By \cite[Proposition 4.7]{CCCGW18} we have $\rk F = 6.$ Consider   the apolar sets $\mathbb{X}, \mathbb{X}'$, given  by 
    \begin{equation*}
        I({\mathbb{X}}) = (y_1y_2, x^3+y_1^3-y_2^3), \, I({\mathbb{X}'}) = (x^2+y_1y_2, x^3+y_1^3-y_2^3). 
    \end{equation*}
    {Note that the points of $\mathbb{X}$ lie on a unique conic: the union of two lines. Whereas the points of $\mathbb{X}'$  lie on a  unique conic which is irreducible. Hence, no linear change of coordinate can take $I(\mathbb{X})$ to $I(\mathbb{X}').$}
\end{example}
}

Our paper is organized as follows. In section \ref{Sec:Background} we recall some well-known results which are useful to obtain bounds for the Waring rank. In Section \ref{Sec:Basic results} we prove some technical lemmas on upper bounds for Hilbert function and change of coordinates. In sections 4, and 5 we study the specific forms $x^2(y_1^2+\cdots+y_n^2)$ and $(x_1^2+x_2^2)(y_1^2+\cdots+y_n^2)$. In particular, we study the geometry, the defining ideal, and the Hilbert function  of all minimal apolar sets. The results are summarised in Theorem \ref{thm:A} and Theorem \ref{thm:B}. In section 6, we present new lower and upper bounds for the Waring rank of forms of the type  $(x_1^2+ \cdots +x_m^2)(y_1^2+ \cdots+y_n^2).$ {In the final section, we present some results computing the cactus rank, border rank, and the dimension of VSP of the forms considered in Theorem A and Theorem B.}
\vskip 2mm
\noindent {\bf Acknowledgements}: The first, third and fourth authors would like to thank the second author and the Politecnico di Torino for the hospitality, where most of this work was done during their visit. The second author would like to thank IIT Dharwad for the kind hospitality during the phases of this research. We thank the anonymous referee for their suggestions and comments that have considerably improved the presentation of this paper. In particular, the whole last section was developed based on their suggestions.

\section{Background material}\label{Sec:Background}

    An important tool, which is useful in the study of Waring rank problems is the apolarity action. 
    Let $T=\mathbb{C}[X_0, \ldots, X_n]$ be a polynomial ring over $\mathbb{C}.$ The apolarity action of $T$ on $S$ is defined on monomials by
    \[X_0^{a_0} \cdots X_n^{a_n} \circ (x_0^{b_0}\cdots x_n^{b_n}) = \frac{\partial^{a_0}}{\partial x_0^{a_0}}\cdots \frac{\partial^{a_n}}{\partial x_n^{a_n}}(x_0^{b_0}\cdots x_n^{b_n}),\]
    and then extended linearly on $S.$ 
    Given a homogeneous form $F \in S_d,$ using the apolarity action, we write annihilator ideal of $F$ as 
    $F^{\perp} = \{ g \in T \mid g \circ F = 0\}.$ Note that $F^{\perp}$ is an Artinian Gorenstein ideal. 
    Consider a Waring decomposition $F = \sum_{i=1}^{r}\alpha_iL_i^d,$ with $L_i = \sum_{j=0}^{n}a_{ij}x_j$. We associate points $P_i \in \mathbb{P}^n$ with $L_i$ for all $1 \leq i \leq r$ given by $P_i = [a_{i0}: \cdots:a_{in}].$ The following lemma describes the deep connection between apolarity and Waring problems.  {We refer the reader to \cite[Lemma 1.15]{IK99} for a proof.}
    \begin{lemma}[Apolarity Lemma]\label{Lemma:Apolarity}
      Let $\mathbb{X}=\{P_1, \ldots, P_r\} \subset \mathbb{P}^n$ be a set of points. Then for some $\alpha_i \in \mathbb{C}$, we have $F = \sum_{i=1}^r\alpha_i L_i^d$ if and only if $I(\mathbb{X}) \subset F^{\perp},$ where $L_i$ is the linear form corresponding the point $P_i$ for $1 \leq i \leq r.$
    \end{lemma}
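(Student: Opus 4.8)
The plan is to deduce both implications from a single computation that records how a constant-coefficient differential operator acts on a power of a linear form, and then to handle the nontrivial (``only if'') direction through the nondegeneracy of the apolarity pairing in degree $d$. First I would establish the fundamental identity. Writing $P=[a_0:\cdots:a_n]$ and $L=\sum_j a_j x_j$, for a monomial $g=X_0^{c_0}\cdots X_n^{c_n}$ of degree $e\le d$ the repeated chain rule gives
\[
g\circ L^d=\frac{d!}{(d-e)!}\,g(a_0,\dots,a_n)\,L^{d-e},
\]
and by linearity this holds for every $g\in T_e$, where $g(P)$ denotes evaluation of $g$ at the coordinates of $P$. Since $L\neq 0$ and the scalar $d!/(d-e)!$ is nonzero, this shows $g\circ L^d=0$ exactly when $g$ vanishes at $P$ (for $e\le d$), while for $e>d$ one has $g\circ L^d=0$ automatically. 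This identity is the bridge between the apolarity action and the vanishing ideal $I(\mathbb{X})$.

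For the forward implication I would assume $F=\sum_i\alpha_i L_i^d$. Because $I(\mathbb{X})$ is a homogeneous ideal and $F^\perp$ is homogeneous (each graded piece of $g$ maps $F$ into a distinct degree), it suffices to verify $g\circ F=0$ for homogeneous $g\in I(\mathbb{X})$ of each degree $e$. If $e>d$ this is immediate; if $e\le d$, applying the identity term-by-term yields $g\circ F=\tfrac{d!}{(d-e)!}\sum_i\alpha_i\,g(P_i)\,L_i^{d-e}$, and each $g(P_i)=0$ since $g\in I(\mathbb{X})$. Hence $I(\mathbb{X})\subseteq F^\perp$.

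The converse carries the real content, and here I would pass to degree $d$ and exploit the apolarity pairing $T_d\times S_d\to\mathbb{C}$, $(g,H)\mapsto g\circ H$. In the monomial basis this pairing is diagonal with nonzero entries $X^\alpha\circ x^\alpha=\alpha!$, hence perfect. Setting $V=\langle L_1^d,\dots,L_r^d\rangle\subseteq S_d$, the identity with $e=d$ gives $g\circ L_i^d=d!\,g(P_i)$, so the orthogonal complement $V^\perp=\{g\in T_d:g\circ L_i^d=0\ \forall i\}$ is exactly $I(\mathbb{X})_d$. The hypothesis $I(\mathbb{X})\subseteq F^\perp$ yields in particular $I(\mathbb{X})_d\subseteq (F^\perp)_d$, i.e. $g\circ F=0$ for every $g\in V^\perp$; by perfectness of the pairing $(V^\perp)^\perp=V$, whence $F\in V$ and therefore $F=\sum_i\alpha_i L_i^d$ for suitable $\alpha_i\in\mathbb{C}$.

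The main obstacle I anticipate is this converse step, and precisely the point that it suffices to test membership in $F^\perp$ in the single degree $d$ rather than in all degrees. That reduction is supplied by the nondegeneracy of the degree-$d$ apolarity pairing together with the double-orthogonal identity $(V^\perp)^\perp=V$; once these are in place the conclusion is forced. By comparison the forward direction and the differentiation identity are routine, so I would spend the care on verifying perfectness of the pairing and the clean identification $V^\perp=I(\mathbb{X})_d$.
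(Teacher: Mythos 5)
Your proof is correct and complete: the differentiation identity $g\circ L^d=\tfrac{d!}{(d-e)!}\,g(P)\,L^{d-e}$, the reduction to homogeneous elements for the forward direction, and the converse via the identification $V^\perp=I(\mathbb{X})_d$ together with the double-annihilator identity $(V^\perp)^\perp=V$ for the perfect pairing $T_d\times S_d\to\mathbb{C}$ are all sound. There is nothing to compare against in the paper itself: this lemma is stated there as classical background material with no proof given (the perfect pairing is only invoked later, with a reference to \cite{IK99}), and your argument is exactly the standard proof of the Apolarity Lemma from the literature.
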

    
    Let {$\mathbb{X} \subset \mathbb{P}^n$} be a set of points. $\mathbb{X}$ is  said to be a  apolar to $F$ if $I(\mathbb{X}) \subset F^{\perp},$ and we call $I(\mathbb{X})$ an apolar ideal of $F.$  Further, $\mathbb{X}$ is a minimal apolar set of $F$ if $|\mathbb{X}| = \rk F.$ Note that the Apolarity Lemma is useful to give an upper bound for the Waring rank of $F.$

    In \cite{CCCGW18} the authors introduced the idea of e-computability, that is a technique to improve lower bounds for the Waring rank of a given form. An other result in this direction   is Theorem 1.3 in \cite{LT10}.


\begin{theorem}\label{Thm:Ecomputability}
    Let $F \in S_d,$ and $\mathbb{X}\in \mathbb{P}^n$ be an apolar set of $F.$ Let $I$ be an ideal in $T$ generated in degree $e >0,$  and $t \in I_e$. If $t$ is a non-zerodivisor in $T/I(\mathbb{X})\colon I$, then for $s >>0,$ we have
    \[ |\mathbb{X}| \geq \frac{1}{e}\sum_{i=0}^s\HF(T/I(\mathbb{X})\colon I +(t), i) \geq \frac{1}{e} \sum_{i=0}^s\HF(T/F^{\perp}\colon I +(t), i).\]
\end{theorem}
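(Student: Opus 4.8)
The plan is to isolate the role of the non-zerodivisor $t$ in a single short exact sequence and to treat the two inequalities separately. Write $J = I(\mathbb{X})\colon I$. First I would record that $I(\mathbb{X}) \subseteq J$ (because $I\cdot I(\mathbb{X}) \subseteq I(\mathbb{X})$), so $T/J$ is a graded quotient of $T/I(\mathbb{X})$ and $\HF(T/J,i) \le \HF(T/I(\mathbb{X}),i)$ for all $i$. Since $\mathbb{X}$ is a finite set of points, $T/I(\mathbb{X})$ is one-dimensional and $\HF(T/I(\mathbb{X}),i)$ stabilizes to $|\mathbb{X}|$. Moreover, the existence of a homogeneous non-zerodivisor $t$ of positive degree on $T/J$ forces $T/J$ to be non-Artinian, hence $\dim T/J = 1$; therefore $\HF(T/J,i)$ stabilizes to a constant $m \le |\mathbb{X}|$. (Geometrically $J$ is the ideal of the points of $\mathbb{X}$ not lying on $V(I)$, and $m$ is simply their number.)

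Next, the hypothesis that $t$ is a non-zerodivisor on $T/J$ is exactly what makes the sequence of graded $T$-modules
\[ 0 \longrightarrow (T/J)(-e) \xrightarrow{\ \cdot t\ } T/J \longrightarrow T/(J+(t)) \longrightarrow 0 \]
exact. Passing to Hilbert functions gives $\HF(T/(J+(t)),i) = \HF(T/J,i) - \HF(T/J,i-e)$. Summing from $i=0$ to $s$ and reindexing the shifted sum (using $\HF(T/J,j)=0$ for $j<0$), the expression telescopes to $\sum_{i=s-e+1}^{s}\HF(T/J,i)$. For $s \gg 0$ each of these $e$ terms equals the stabilized value $m$, so the sum equals $e\,m$. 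Dividing by $e$ and using $m \le |\mathbb{X}|$ from the previous step yields the first inequality; in fact it shows $\frac{1}{e}\sum_{i=0}^s \HF(T/(J+(t)),i) = m$ for $s \gg 0$.

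For the second inequality I would use only that $\mathbb{X}$ is apolar to $F$, namely $I(\mathbb{X}) \subseteq F^{\perp}$. Colon by $I$ preserves inclusions, so $I(\mathbb{X})\colon I \subseteq F^{\perp}\colon I$, and adding $(t)$ gives $(I(\mathbb{X})\colon I)+(t) \subseteq (F^{\perp}\colon I)+(t)$. The induced graded surjection of quotients then gives $\HF(T/((F^{\perp}\colon I)+(t)),i) \le \HF(T/((I(\mathbb{X})\colon I)+(t)),i)$ for every $i$; summing over $i \le s$ and dividing by $e$ produces the second inequality. The only points requiring care are the telescoping bookkeeping and the verification that $T/J$ is genuinely one-dimensional (so that its Hilbert function stabilizes); I do not anticipate a serious obstacle beyond this, since the non-zerodivisor hypothesis delivers the exact sequence directly.
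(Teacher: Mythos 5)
Your proof is correct. Note that the paper does not prove this statement at all---it is quoted as known background, citing Theorem 1.3 of \cite{LT10} and the e-computability framework of \cite{CCCGW18}---and your argument (the exact sequence $0 \to (T/J)(-e) \xrightarrow{\cdot t} T/J \to T/(J+(t)) \to 0$ furnished by the non-zerodivisor hypothesis, the telescoping of Hilbert functions to $\sum_{i=s-e+1}^{s}\HF(T/J,i)=e\,m$ with $m\leq |\mathbb{X}|$, and the termwise comparison of Hilbert functions induced by the inclusion $I(\mathbb{X})\colon I \subseteq F^{\perp}\colon I$) is precisely the standard proof of that cited result, sound up to the trivial degenerate case $I(\mathbb{X})\colon I = T$, where both sums vanish and the inequalities hold anyway.
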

A homogeneous form $F\in S_d$ is said to be $e$-computable if there exists an ideal $I \subset T$, and $t \in I_e \neq 0$ such that 
\[\rk F = \frac{1}{e} \sum_{i=0}^s\HF(T/F^{\perp}\colon I+(t), i).\]

The study of the Waring locus of a form was introduced in \cite{ForbiddenLocus}. For $F \in S_d,$
the Waring locus  of $F$ is given by 
\[
\mathcal{W}_F = \{ P\in \mathbb{P}^n \mid P\in \mathbb{X}, \, I_{\mathbb{X}} \subset F^{\perp}, \, \mid \mathbb{X} \mid  =\rk F\,\}.
\]
The locus of forbidden points of a form is defined as its complement, that is $\mathcal{F}_F = \mathbb{P}^n \, \setminus \,\mathcal{W}_F.$

\section{Basic results}\label{Sec:Basic results}

For the first result of this section, we set $S=\mathbb{C}[x_1,\ldots,x_n]$ be the standard graded polynomial ring and let $T=\mathbb{C}[X_1,\ldots,X_n]$ be the ring of differential operators on $S$. The following lemma is, at the best of our knowledge, a new bound on the Hilbert function of a minimal apolar set to a given form.

\begin{lemma}\label{Lem:UpperboundHF}
    Let $\mathbb{X}$ be a minimal set of $s$ points apolar to $F\in S_d$ and let $V\subset (T/I({\mathbb{X}}))_a$ be a subspace. If $\bar{g}\in (T/I({\mathbb{X}}))_b$ where {b$\geq 0$,} is  a nonzerodivisor and $\bar{g}V\subset (F^\perp/I({\mathbb{X}}))$, then
    \[
        \HF(\mathbb{X},d-a-b)\leq s-\dim_\mathbb{C} V.   
    \]
\end{lemma}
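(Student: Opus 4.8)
The plan is to translate the whole statement into linear algebra over $\mathbb{C}^s$ via evaluation at the points of $\mathbb{X}=\{P_1,\dots,P_s\}$, reading both the hypothesis $gV\subset F^\perp/I(\mathbb{X})$ and the conclusion as statements about subspaces of $\mathbb{C}^s$ together with an invertible diagonal rescaling. First I would dispose of the trivial range: if $a+b>d$ then $\HF(\mathbb{X},d-a-b)=0$ and the inequality is automatic (since $\dim_\mathbb{C}V\le s$), so I may assume $0\le d-a-b$.

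Next, fix coordinate representatives $p_1,\dots,p_s$ for the points and consider, in each degree $i$, the evaluation map $\mathrm{ev}_i\colon T_i\to\mathbb{C}^s$, $h\mapsto (h(p_1),\dots,h(p_s))$. Its kernel is $I(\mathbb{X})_i$, so it induces an injection $(T/I(\mathbb{X}))_i\hookrightarrow\mathbb{C}^s$ whose image $U_i:=\mathrm{ev}_i(T_i)$ has dimension $\HF(\mathbb{X},i)$. Two identifications are central. Writing the minimal decomposition $F=\sum_{j=1}^s\alpha_j L_j^d$ supplied by Lemma \ref{Lemma:Apolarity}, minimality forces every $\alpha_j\ne 0$ (dropping a vanishing coefficient would give a decomposition with fewer than $s=\rk F$ summands). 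Using $h\circ L_j^d=\tfrac{d!}{(d-k)!}\,h(p_j)L_j^{d-k}$ for $h\in T_k$ one gets $h\circ F=\tfrac{d!}{(d-k)!}\sum_j\alpha_j h(p_j)L_j^{d-k}$, and since the apolarity pairing $T_{d-k}\times S_{d-k}\to\mathbb{C}$ is perfect, a combination $\sum_j c_jL_j^{d-k}$ vanishes iff $(c_j)_j$ is orthogonal to every $\mathrm{ev}_{d-k}(h)$, i.e. iff $(c_j)_j\in U_{d-k}^\perp$ (orthogonal complement in $\mathbb{C}^s$ for the standard bilinear form). Hence
\[
h\in F^\perp_k \iff \big(\alpha_j h(p_j)\big)_j\in U_{d-k}^\perp.
\]
Second, as $T/I(\mathbb{X})$ is reduced and one–dimensional with associated primes the ideals of the individual points, $g$ is a nonzerodivisor exactly when $g(p_j)\ne 0$ for all $j$; equivalently the diagonal matrix $D_g=\mathrm{diag}(g(p_1),\dots,g(p_s))$ is invertible, and so is $D_\alpha=\mathrm{diag}(\alpha_1,\dots,\alpha_s)$.

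With these in hand the conclusion is a one–line dimension count. Put $\overline V=\mathrm{ev}_a(V)\subset U_a$; since $\mathrm{ev}_a$ is injective on $(T/I(\mathbb{X}))_a$ we have $\dim_\mathbb{C}\overline V=\dim_\mathbb{C}V$. For $v\in V$ (with any lift to $T_a$, the value $v(p_j)$ being well defined) the hypothesis $gv\in F^\perp_{a+b}$ and the displayed characterization applied with $k=a+b$ give $\big(\alpha_j\,g(p_j)\,v(p_j)\big)_j\in U_{d-a-b}^\perp$, that is $D_\alpha D_g\,\overline V\subset U_{d-a-b}^\perp$. As $D_\alpha D_g$ is invertible it preserves dimension, so
\[
\dim_\mathbb{C}V=\dim_\mathbb{C}\big(D_\alpha D_g\,\overline V\big)\le \dim_\mathbb{C}U_{d-a-b}^\perp = s-\HF(\mathbb{X},d-a-b),
\]
which rearranges to the claimed inequality.

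The main obstacle is establishing the displayed equivalence cleanly, namely identifying membership in $F^\perp_k$ with orthogonality to the span $U_{d-k}$ of the evaluation vectors; this is where the perfectness of the apolarity pairing in degree $d-k$ and the translation of the relations among the $L_j^{d-k}$ into $U_{d-k}^\perp$ are used. Everything else — the nonvanishing description of nonzerodivisors in a reduced coordinate ring of points, and the invertibility of the diagonal rescalings (which is exactly where minimality enters, through $\alpha_j\ne 0$) — is routine.
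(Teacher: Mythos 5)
Your proof is correct and follows essentially the same route as the paper's: both write a minimal decomposition $F=\sum_j\alpha_jL_j^d$ (with minimality forcing $\alpha_j\neq 0$), translate the hypothesis into $gv\in F^\perp$ and hence into the relations $\sum_j \alpha_j\, g(p_j)v(p_j)\,L_j^{d-a-b}=0$, use that a nonzerodivisor on the reduced ring of points has nonvanishing values at every point to invert the diagonal rescaling, and finish with a dimension count. The only difference is presentational: the paper runs the count through explicit matrices ($MGN=0$) and rank--nullity, while you run it through evaluation maps and orthogonal complements in $\mathbb{C}^s$, which has the minor advantage of proving, via the perfect pairing, the identification $\dim_\mathbb{C}\langle L_1^{e},\ldots,L_s^{e}\rangle=\HF(\mathbb{X},e)$ that the paper invokes without proof.
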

\begin{proof}
    Let $\mathbb{X}$ be a set of $s$ points apolar to $F$.  For each $P_i = [a_{i,0}: \ldots :a_{i,n}] \in \mathbb{X}$ we can construct the linear form
$$L_i = a_{i,0}x_0+ a_{i,1}x_1 + \cdots + a_{i, n}x_n$$
such that, by the Apolarity Lemma \ref{Lemma:Apolarity},
\[F=\sum_{i=1}^s \epsilon_i L_i^d,\]
for some constants $\epsilon_i\in\mathbb{C}$. Note that, since $\mathbb{X}$ is minimal, we can assume the $\epsilon_i=1$ for $1\leq i\ \leq s$.
Let $\dim V=l$, and $v_1, v_2, \ldots, v_l$ be a basis of $V$. Since $\bar{g}V \subset F^{\perp}/I({\mathbb{X}})$, there exists $f_j \in I({\mathbb{X}})$ such that $f_j +gv_j \in F^{\perp}$.
Since $f_j \in I({\mathbb{X}}) \subseteq F^{\perp},$ we get  $gv_j \in F^{\perp}$ for $1\leq j \leq l$.

\noindent Hence for $1\leq j \leq l$,  $gv_j \circ F=0.$
Thus we get the system of equations 
\begin{eqnarray*}
    \sum_{i=1}^s gv_j(P_i) L_i^{d-a-b} =0
\end{eqnarray*}
for $1 \leq j \leq l$, Which in matrix form has the shape:

\begin{equation}\label{EQ:MatrixEQ}
    \begin{bmatrix}
        L_1^{d-a-b} & L_2^{d-a-b} & \ldots & L_s^{d-a-b}
    \end{bmatrix}
    \begin{bmatrix}
        gv_i(P_j) \\
    \end{bmatrix}_{1\leq i\leq l, 1\leq j\leq s}
    =\text{\Large0}.
\end{equation}
If  we let $\{u_1, \ldots, u_{c}\}$ be an ordered basis of $T_{d-a-b}$, where $c={\binom{n+d-a-b}{d-a-b}}$, the we can write $L_i^{d-a-b}$ as a linear combination of the chosen basis. Namely, there exists a $c\times s$ matrix $M$ such that
\begin{equation}\label{EQ:MatrixEQ2}
    \begin{bmatrix}
        L_1^{d-a-b} & L_2^{d-a-b} & \ldots & L_s^{d-a-b}
    \end{bmatrix}
    =
    \begin{bmatrix}
        u_1 & u_2 & \ldots & u_c 
    \end{bmatrix}
    M.
\end{equation}
Thus we have that
\begin{equation}\label{EQ:rkM}
\text{rank}(M) = \dim_\mathbb{C} \text{span} (L_1^{d-a-b} , L_2^{d-a-b},\ldots, L_s^{d-a-b})_{d-a-b}.
\end{equation}
Substituting \eqref{EQ:MatrixEQ2} in \eqref{EQ:MatrixEQ}, we obtain
\begin{equation*}
    \begin{bmatrix}
        u_1 & u_2 & \ldots & u_{c} 
    \end{bmatrix}
    M
    \begin{bmatrix}
    gv_i(P_j) \\
    \end{bmatrix}_{1\leq i\leq l, 1\leq j\leq s}
    =\text{\Large0}.
\end{equation*}
Since $u_1, \ldots, u_{c}$ is a basis of $T_{d-a-b}$, we get 
\begin{equation}\label{EQ:MN}
    M
    \begin{bmatrix}
         gv_i(P_j) \\
    \end{bmatrix}_{1\leq i\leq l, 1\leq j\leq s}
    =\text{\Large0}
\end{equation}
Moreover, we get that {\footnotesize{
\begin{equation*}
    \begin{bmatrix}
        gv_i(P_j) \\
    \end{bmatrix}_{1\leq i\leq l, 1\leq j\leq s}  = 
    GN
\end{equation*}
}}
where $G$ is the $s\times s$ diagonal matrix having on the diagonal $g(P_1),\ldots,g(P_s)$ and $$N =\begin{bmatrix}
         v_i(P_j)
    \end{bmatrix}_{1\leq i\leq l, 1\leq j\leq s}.$$
Note that, since $\bar{g}$ is a non-zerodivisor in $(T/I({\mathbb{X}}))_b$, $g(P_i) \neq 0$ for all $1 \leq i \leq s$, $G$
is invertible. 
Also, note that, since $\{v_1, v_2, \ldots, v_l\}$ are linearly independent in $T/I({\mathbb{X}})$, we have that
\begin{equation}\label{EQ:rkN}
   \text{rank}(N) =  \dim_k(V) = l.
\end{equation}
\noindent Thus Equation \eqref{EQ:MN} yields:
\begin{equation*}
    M GN =\text{\Large0}.
\end{equation*}
Therefore the \text{column space } of $GN$ is contained in the {null space } of $M$. Thus,
\[
\text{rank}(GN) \leq \nullity(M),
\]
where $\nullity(M)$ is the dimension of the right kernel of $M$.
Since $G$ is invertible, $GN$ and $N$ have the same column space. Thus 
\[
\text{rank}(N)=\text{rank}(GN) \leq \nullity(M)
\]
Thus, by the Rank-Nullity Theorem, we have
\[
    \text{rank}(M)=s-\nullity(M)
\]
and thus $\text{rank}(M)\leq s-\text{rank}(N)$.

By Equation \eqref{EQ:rkM}, we have
\begin{equation*}
        \text{rank}(M)  = \dim_\mathbb{C}(L_1^{d-a-b}, \ldots, L_{s}^{d-a-b}) 
         = \HF(\mathbb{X}, d-a-b). 
\end{equation*}
and, by Equation \eqref{EQ:rkN}, we have $\text{rank}(N) = \dim_{\mathbb{C}} V$.

Hence, the result is now proved.
\end{proof}

From now on, and throughout the paper, we consider a specialised version of the rings $S$ and $T$ in which we divide the variables into two set. Namely, we let $S=\mathbb{C}[x_1,\ldots,x_m,y_1,\ldots,y_n]$ be a standard graded polynomial ring and let $T=\mathbb{C}[X_1,\ldots,X_m,Y_1,\ldots,Y_n]$ the ring of differential operators on $S$.
\begin{lemma} \label{Lem:ProjectionOverSetOfVariablesSaturated}
    Let $\mathbb{X}$ be a finite set of points in $\mathbb{P}^{m+n-1}=\mathbb{P}(S_1)$ . If we let $\mathbb{Y}$ be the projection of $\mathbb{X}$ from $V(Y_1,\ldots,Y_n)$ on $V(X_1,\ldots,X_m)$, then
    \[
    I(\mathbb{Y})=(X_1,\ldots,X_m)+I(\mathbb{X})\cap\mathbb{C}[Y_1,\ldots,Y_n].
    \]
\end{lemma}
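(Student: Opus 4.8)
The plan is to make the projection completely explicit, reduce the asserted identity to a statement about vanishing of $Y$-forms, and then verify that statement by direct evaluation on the points of $\mathbb{X}$.

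First I would record the projection on coordinates. Write a point of $\mathbb{P}^{m+n-1}$ as $P=[a_1:\cdots:a_m:b_1:\cdots:b_n]$, so that $a_i=X_i(P)$ and $b_j=Y_j(P)$. The center $V(Y_1,\ldots,Y_n)$ is the set of points with $b_1=\cdots=b_n=0$ and the target $V(X_1,\ldots,X_m)$ is the set with $a_1=\cdots=a_m=0$; these are complementary linear subspaces of $\mathbb{P}^{m+n-1}$. A short computation of $\big(\langle P\rangle+V(Y_1,\ldots,Y_n)\big)\cap V(X_1,\ldots,X_m)$ shows that, for $P$ outside the center, its image is the point of $V(X_1,\ldots,X_m)$ whose $Y$-coordinates are $(b_1,\ldots,b_n)$; in other words, the projection simply discards the $x$-coordinates. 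In particular every point of $\mathbb{Y}$ lies in $V(X_1,\ldots,X_m)$, so $(X_1,\ldots,X_m)\subseteq I(\mathbb{Y})$.

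Next I would peel off the linear part. Writing an arbitrary $h\in T$ as $h=h_0+\sum_{i=1}^m X_i h_i$ with $h_0\in\mathbb{C}[Y_1,\ldots,Y_n]$, and using that every point of $\mathbb{Y}$ has all $X_i=0$, one sees that $h$ vanishes on $\mathbb{Y}$ if and only if $h_0$ does. This gives the splitting
\[
I(\mathbb{Y})=(X_1,\ldots,X_m)+J,\qquad J:=\{\,g\in\mathbb{C}[Y_1,\ldots,Y_n]\;:\;g\text{ vanishes on }\mathbb{Y}\,\},
\]
so it only remains to identify $J$ with $I(\mathbb{X})\cap\mathbb{C}[Y_1,\ldots,Y_n]$. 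For this I would fix a homogeneous $g\in\mathbb{C}[Y_1,\ldots,Y_n]$ of positive degree. Since $g$ involves no $X_i$, for each $P\in\mathbb{X}$ we have $g(P)=g(b_1,\ldots,b_n)$, which is exactly the value of $g$ at the image of $P$ whenever that image is defined. Hence $g\in I(\mathbb{X})$ if and only if $g$ vanishes at every image point, i.e. $g\in J$, yielding $J=I(\mathbb{X})\cap\mathbb{C}[Y_1,\ldots,Y_n]$.

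The one point requiring care, and the main obstacle, is a point $P\in\mathbb{X}$ lying in the center $V(Y_1,\ldots,Y_n)$, for which the projection is undefined; this case genuinely occurs, for instance $P=[1:0:\cdots:0]$ in the setting of Theorem \ref{thm:A}. Such a $P$ has $b_1=\cdots=b_n=0$, so every positive-degree $g\in\mathbb{C}[Y_1,\ldots,Y_n]$ automatically satisfies $g(P)=0$; thus these points impose no condition in either direction and the evaluation argument above goes through unchanged. Restricting to positive degrees throughout is harmless, since the homogeneous ideal of a finite reduced point set contains no nonzero constants, and the splitting computed above is automatically the saturated vanishing ideal.
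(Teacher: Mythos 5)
Your proof is correct, but it takes a genuinely different route from the paper's. The paper argues ideal-theoretically: it notes that $J=I(\mathbb{X})\cap\mathbb{C}[Y_1,\ldots,Y_n]$ is radical (contraction of a radical ideal), deduces from the isomorphism $T/\bigl((X_1,\ldots,X_m)+J\bigr)\cong \mathbb{C}[Y_1,\ldots,Y_n]/J$ that $(X_1,\ldots,X_m)+J$ is radical, and then invokes the standard fact that $I(\mathbb{Y})$ is the \emph{saturation} of $(X_1,\ldots,X_m)+J$, concluding because a radical homogeneous ideal is already saturated. You bypass elimination theory and saturation altogether: you describe the projection explicitly on coordinates, split any $h\in T$ as $h_0+\sum_i X_ih_i$ with $h_0\in\mathbb{C}[Y_1,\ldots,Y_n]$, and compare vanishing conditions on $\mathbb{X}$ and on $\mathbb{Y}$ point by point. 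What your approach buys: it is elementary and self-contained, it produces $I(\mathbb{Y})$ directly as a vanishing ideal so no saturation question ever arises, and it treats explicitly the points of $\mathbb{X}$ lying in the center $V(Y_1,\ldots,Y_n)$ --- a case the paper's appeal to the ``standard definition of projection'' glosses over, since that fact is usually stated for centers disjoint from the variety. What the paper's approach buys: brevity, and the reusable conceptual point that radicality is exactly what makes the saturation step vacuous, which is the real subtlety in computing ideals of projections. One shared caveat: both arguments implicitly assume $\mathbb{Y}\neq\emptyset$, i.e.\ $\mathbb{X}\not\subseteq V(Y_1,\ldots,Y_n)$ (otherwise $I(\mathbb{Y})$ is the unit ideal while the right-hand side is only the irrelevant maximal ideal); your closing remark that the ideal of a finite reduced point set contains no nonzero constants quietly uses this, as does the paper's ``radical implies saturated'' step.
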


\begin{proof}
    Note that $I(\mathbb{X})$ is a radical ideal being an ideal of points, and so is $J=I(\mathbb{X}) \cap \mathbb{C}[Y_1,\ldots,Y_n]$ as the contraction of a radical ideal is radical {(see \cite[Exercise 1.18)]{AM69}}. Now we show that $(X_1,\ldots, X_m)+J$ is a radical ideal. Observe that
        \begin{equation}\label{EQ:reduced}
            \frac{\mathbb{C}[X_1,\ldots,X_m,Y_1,\ldots,Y_n]}{(X_1,\ldots, X_m)+J}\cong \frac{\mathbb{C}[Y_1,\ldots,Y_n]}{J}.
        \end{equation}
    Since $J$ is a radical ideal, ${\mathbb{C}[Y_1,\ldots,Y_n]}/{J}$ 
    is reduced. 
    Hence by Equation \eqref{EQ:reduced} we have $(X_1,\ldots, X_m)+J$ is a radical ideal. By the standard definition of projection, we have that $I(\mathbb{Y})$ is the saturation of $(X_1,\ldots, X_m)+J$. Since a radical ideal is saturated, the result is proved.
\end{proof}
Next lemma shows an interesting connection between apolar set of a polynomial and its projection.
\begin{lemma}\label{Lemma:ProjectionisApolar}
    Let $F=F_1F_2 \neq 0$ such that $F_1 \in S_1= \mathbb{C}[x_1, \ldots, x_m]$ and $F_2 \in S_2= \mathbb{C}[y_1, \ldots, y_n],$ and $\mathbb{X}$ be apolar to $F.$ If $\mathbb{Y}_1$ is a projection of $\mathbb{X}$ from $V(X_1, \ldots, X_m)$ to $V(Y_1, \ldots, Y_n)$, then $\mathbb{Y}_1$ is apolar to $F_1.$ Similarly, If $\mathbb{Y}_2$ is a projection of $\mathbb{X}$ from $V(Y_1, \ldots, Y_n)$ to $V(X_1, \ldots, X_m)$, then $\mathbb{Y}_2$ is apolar to $F_2.$ 
\end{lemma}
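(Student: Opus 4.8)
The plan is to translate the assertion ``$\mathbb{Y}_2$ is apolar to $F_2$'' directly into the ideal containment $I(\mathbb{Y}_2)\subseteq F_2^\perp$ (this is exactly the definition of apolarity), and then verify that containment by differentiation. I would prove the statement for $\mathbb{Y}_2$, the projection of $\mathbb{X}$ from $V(Y_1,\ldots,Y_n)$ onto $V(X_1,\ldots,X_m)$; the claim for $\mathbb{Y}_1$ then follows verbatim after interchanging the two blocks of variables ($x\leftrightarrow y$, $X\leftrightarrow Y$, $F_1\leftrightarrow F_2$).

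First I would identify the defining ideal of the projected set. Since $\mathbb{Y}_2$ is precisely the projection treated in Lemma \ref{Lem:ProjectionOverSetOfVariablesSaturated}, that lemma gives
\[
I(\mathbb{Y}_2)=(X_1,\ldots,X_m)+\bigl(I(\mathbb{X})\cap\mathbb{C}[Y_1,\ldots,Y_n]\bigr),
\]
and I would check the two summands lie in $F_2^\perp$ separately. The containment $(X_1,\ldots,X_m)\subseteq F_2^\perp$ is immediate, because $X_j\circ F_2=\partial F_2/\partial x_j=0$ for every $j$, as $F_2\in\mathbb{C}[y_1,\ldots,y_n]$ involves no $x$-variable. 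The substance of the argument is the contraction $J=I(\mathbb{X})\cap\mathbb{C}[Y_1,\ldots,Y_n]$: for $g\in J$ we have $g\in I(\mathbb{X})\subseteq F^\perp$, hence $g\circ(F_1F_2)=0$. Because $g$ is a differential operator in the $Y$-variables only and $F_1\in\mathbb{C}[x_1,\ldots,x_m]$ does not involve the $y$-variables, the product rule yields $g\circ(F_1F_2)=F_1\,(g\circ F_2)$. Thus $F_1\,(g\circ F_2)=0$, and since $F_1\neq 0$ and $S$ is an integral domain we conclude $g\circ F_2=0$, i.e. $g\in F_2^\perp$. Combining the two containments gives $I(\mathbb{Y}_2)\subseteq F_2^\perp$, which is exactly the statement that $\mathbb{Y}_2$ is apolar to $F_2$.

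I do not expect a serious obstacle here; the proof is short once the ideal of the projection is computed. The only points demanding care are bookkeeping: one must apply the correct instance of Lemma \ref{Lem:ProjectionOverSetOfVariablesSaturated} so that the projected points genuinely correspond to forms in the variables matching $F_2$ (here the center $V(Y_1,\ldots,Y_n)$ and target $V(X_1,\ldots,X_m)$ must be matched to the block split of $F=F_1F_2$), and one must observe that differential operators in a single block of variables commute past a factor built from the complementary block. A final remark worth making is that the argument is purely ideal-theoretic, so it remains valid even when some points of $\mathbb{X}$ lie on the center of the projection; such points are automatically absorbed into the contraction $J$ and require no separate treatment.
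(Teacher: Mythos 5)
Your proposal is correct and takes essentially the same route as the paper: both identify the projection ideal via Lemma \ref{Lem:ProjectionOverSetOfVariablesSaturated} and then verify that each summand annihilates the relevant factor, using the product rule together with the fact that the complementary factor is nonzero. The only cosmetic differences are that the paper argues for $\mathbb{Y}_1$ and handles $\mathbb{Y}_2$ ``similarly'' (while you do the reverse), and that the paper additionally quotes the decomposition $F^{\perp}=(F_1^{\perp})_{S_1}+(F_2^{\perp})_{S_2}$, which its own argument never actually uses.
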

\begin{proof}
    By \cite[Proposition 3.1]{DFP20-ApolarAlgebra} we have 
    \begin{equation*}
        F^{\perp} = (F_1^{\perp})_{S_1} +(F_2^{\perp})_{S_2}
    \end{equation*}
  where  $(F_i^{\perp})_{S_i}$ denotes the annihilator of $F_i$ computed in $S_i$ for $i=1,2.$
  Let $\mathbb{Y}_1$ is a projection of $\mathbb{X}$ from $V(X_1, \ldots, X_m)$ to $V(Y_1, \ldots, Y_n)$. By Lemma \ref{Lem:ProjectionOverSetOfVariablesSaturated} we have 
  \begin{equation*}
      I(\mathbb{Y}_1) = I(\mathbb{X})\cap \mathbb{C}[X_1, \ldots, X_m] +(Y_1, \ldots, Y_n).
  \end{equation*}
  If $g \in I(\mathbb{X})\cap \mathbb{C}[X_1, \ldots, X_m],$ then $g\circ F = 0$. This implies $F_2 (g\circ F_1) = 0$. Since $F_2 \neq 0,$ we obtain  $g \in F_1^{\perp}.$
  Moreover, $(Y_1, \ldots, Y_n) \subseteq F_1^{\perp}$. Hence $I(\mathbb{Y}_1) \subset F_1^{\perp}$. Therefore $\mathbb{Y}_1$ is apolar to $F_1.$ Similarly we obtain $\mathbb{Y}_2$ is apolar to $F_2.$
\end{proof}
To study minimal apolar sets to a form $F$, we will use the Apolarity Lemma. Thus we need to describe the annihilators of the forms of interest for us. This is done in the following lemma.

\begin{lemma}\label{Lem:PerpF(x)G(y)}
    Let $F=(x_1^2+\cdots +x_m^2)(y_1^2+\cdots +y_n^2) ,$ where $m\geq 1, n\geq 2.$ Then 
    \[
F^{\perp} = 
\begin{cases}
  (X_1^3, \{Y_1^2-Y_i^2\}_{2\leq i\leq n},\{Y_iY_j\}_{1\leq i<j\leq n}) & \text{if } m=1 \\[6pt]
  (\{X_1^2-X_i^2\}_{2\leq i\leq m}, \{X_iX_j\}_{1\leq i<j\leq m},     \{Y_1^2-Y_i^2\}_{2\leq i\leq n},\{Y_iY_j\}_{1\leq i<j\leq n}) & \text{if } m\geq 2.
\end{cases}
\]
\end{lemma}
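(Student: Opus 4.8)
The plan is to compute $F^{\perp}$ directly using the structure of $F$ as a product $F = F_1 F_2$ with $F_1 = x_1^2 + \cdots + x_m^2 \in \mathbb{C}[x_1,\ldots,x_m]$ and $F_2 = y_1^2 + \cdots + y_n^2 \in \mathbb{C}[y_1,\ldots,y_n]$. By \cite[Proposition 3.1]{DFP20-ApolarAlgebra} (already invoked in Lemma~\ref{Lemma:ProjectionisApolar}), we have
\[
F^{\perp} = (F_1^{\perp})_{S_1} + (F_2^{\perp})_{S_2},
\]
so it suffices to compute the annihilator of a single sum of squares $x_1^2 + \cdots + x_k^2$ inside its own variables and then combine. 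This reduces the whole problem to one clean computation, and explains the shape of the answer: the $Y$-part of every case is identical (it is $F_2^{\perp}$ for the $n\geq 2$ sum of squares), while the $X$-part depends on whether $m=1$ (where $F_1 = x_1^2$ is a single square) or $m\geq 2$.

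First I would compute $(x_1^2 + \cdots + x_k^2)^{\perp}$ for $k \geq 2$. The form is a quadric of full rank $k$, and I claim its annihilator is
\[
(\{X_1^2 - X_i^2\}_{2\leq i\leq k},\ \{X_iX_j\}_{1\leq i<j\leq k}).
\]
The inclusion $\subseteq$ is a direct check: $(X_1^2 - X_i^2)\circ F_1 = 2 - 2 = 0$ and $(X_iX_j)\circ F_1 = 0$ for $i \neq j$. For the reverse inclusion I would compare Hilbert functions. Since $F_1^{\perp}$ is Artinian Gorenstein with socle degree $2$, its Hilbert function is $(1, k, 1)$. A monomial-ordering or direct linear-algebra argument shows the proposed ideal $J$ has quotient $T/J$ with the same Hilbert function $(1,k,1)$: in degree $1$ nothing is killed, and in degree $2$ the relations $X_iX_j$ ($i<j$) and $X_1^2 - X_i^2$ cut the $\binom{k+1}{2}$-dimensional space down to the single class of $X_1^2$. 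Matching Hilbert functions of $T/J$ and $T/F_1^{\perp}$, together with $J \subseteq F_1^{\perp}$, forces equality. For the degenerate case $k=1$, the form $x_1^2$ has annihilator $(X_1^3)$, since $X_1^3 \circ x_1^2 = 0$ but $X_1^2 \circ x_1^2 = 2 \neq 0$, matching the Hilbert function $(1,1,1)$ of $(x_1^2)^\perp$.

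With these two building blocks in hand, the combination via the direct-sum formula is immediate: for $m=1$ the $X$-part contributes $(X_1^3)$ and for $m\geq 2$ it contributes $(\{X_1^2 - X_i^2\}, \{X_iX_j\})$, in both cases adjoined to the $Y$-part $(\{Y_1^2 - Y_i^2\}_{2\leq i\leq n}, \{Y_iY_j\}_{1\leq i<j\leq n})$ coming from $F_2$, which is exactly the stated ideal. I do not expect a serious obstacle here; the only point requiring care is justifying the reverse inclusion for the single sum-of-squares annihilator. The cleanest route is the Hilbert-function comparison sketched above, but one could alternatively argue that $T/J$ is itself Gorenstein with the correct socle and invoke the bijection between such quotients and their dual socle generators to identify $J$ with $F_1^{\perp}$ on the nose.
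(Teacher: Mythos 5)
Your proof is correct, but it takes a genuinely different route from the paper's. The paper works directly in the full ring $T$: it writes down the candidate ideal $I$, checks $I\subseteq F^{\perp}$, computes $\HF(T/I)$ by a direct (glossed) computation, and then pins down $\HF(T/F^{\perp})$ by combining the Gorenstein symmetry of the socle-degree-$4$ algebra (which gives degrees $0,1,3,4$ for free) with an explicit degree-$2$ catalecticant computation ($X_iY_j\circ F$, $X_i^2\circ F$, $Y_j^2\circ F$); the inclusion plus equal Hilbert functions forces $I=F^{\perp}$. You instead factor the problem through the splitting $F^{\perp}=(F_1^{\perp})_{S_1}+(F_2^{\perp})_{S_2}$ of \cite[Proposition 3.1]{DFP20-ApolarAlgebra} --- a result the paper itself invokes earlier, in the proof of Lemma \ref{Lemma:ProjectionisApolar}, so it is legitimately available here --- and thereby reduce everything to the classical annihilator of a single nondegenerate quadric, handled by a Gorenstein/Hilbert-function comparison in socle degree $2$. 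Your route is more modular and makes the shape of the answer transparent (the $Y$-part is exactly $F_2^{\perp}$, the $X$-part exactly $F_1^{\perp}$), and it replaces the paper's socle-degree-$4$ bookkeeping by two trivial socle-degree-$2$ cases; the paper's route, by contrast, is self-contained for this lemma and does not lean on the external splitting theorem. The only step you gloss is that your candidate ideal $J$ for $(x_1^2+\cdots+x_k^2)^{\perp}$ contains all of $T_3$, so that $\HF(T/J)$ really vanishes in degrees $\geq 3$ and the comparison closes; this is a one-line check (e.g. $X_1^3=X_1(X_1^2-X_2^2)+X_2(X_1X_2)\in J$, and $X_i^2X_j\in J$ for $i\neq j$), entirely analogous to the ``standard computations'' the paper itself leaves to the reader.
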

\begin{proof}
Let 
\[
I = 
\begin{cases}
  (X_1^3, \{Y_1^2-Y_i^2\}_{2\leq i\leq n},\{Y_iY_j\}_{1\leq i<j\leq n}) & \text{if } m=1 \\[6pt]
  (\{X_1^2-X_i^2\}_{2\leq i\leq m}, \{X_iX_j\}_{1\leq i<j\leq m},     \{Y_1^2-Y_i^2\}_{2\leq i\leq n},\{Y_iY_j\}_{1\leq i<j\leq n}) & \text{if } m\geq 2
\end{cases}
\]
and $T=\mathbb{C}[X_1,\cdots,X_m,Y_1,\cdots,Y_n].$ Note that $I\subseteq F^{\perp}.$ Moreover, a standard computations yields:

\[ \begin{array}{r|c|c|c|c|c}
     i&   0 & 1 & 2 &  3 & 4\\ 
     \hline 
     \HF(T/I, i)& 1 & n+m & nm+2 & n+m  & 1
\end{array} \]

Note that $T/F^{\perp}$ is an Artinian Gorenstein ring of socle degree 4. Thus, by symmetry, we have that 
$$\HF(T/F^{\perp},0)= \HF(T/F^{\perp},4)=1 \text{ and } \HF(T/F^{\perp},1)= \HF(T/F^{\perp},3)=n+m.$$
Now we compute $\HF(T/F^{\perp},2)$ by acting all possible degree 2 forms on $F.$ This can be done since apolarity gives a perfect pairing between $S_2$ and $T_2$ (see \cite{IK99}). For $1\leq i\leq m$ and $1\leq j\leq n,$ we have
\begin{align*}
   X_iY_j\circ F&=4x_iy_j \\
   X_i^2 \circ F&=y_1^2+\cdots+y_n^2\\
   Y_j^2 \circ F&=x_1^2+\cdots+x_m^2.
\end{align*}
Thus, $\HF(T/F^{\perp},2)=mn+2,$ and $T/I$ and $ T/F^\perp$ have the same Hilbert functions. Hence, since $I \subseteq F^\perp,$ we showed that $I=F^{\perp}.$
\end{proof}

    We will often use group actions to study apolar subsets. In particular, given the ring
    $$\mathbb{C}[x_1,\ldots,x_m,y_1,\ldots,y_n]$$
    we will consider the following action. We denote by $O(n)$ the subgroup of the complex orthogonal group $O(\mathbb{C},m+n)$ acting as the identity on the $m$ variables $x_i$ and in the standard way on the $n$ variables $y_j$. To a given linear form
    $$L=a_1x_1+\cdots a_mx_m+b_1y_1+\cdots +b_ny_n$$
    corresponds  a projective point
    $$L_P=[a_1:\ldots:a_m:b_1:\ldots:{b_n}],$$ and 
    the action of $g\in O(n)$ on $L$ corresponds to the action of $g^T$ on $L_P$. Moreover, if $\mathbb{X}$ is apolar to a form $F$ and $g\in O(n)$, then $g^T(\mathbb{X})$ is apolar to $g(F)$. 

In particular, we will use the following technical lemma to avoid zeroes in apolar subsets.
\begin{lemma}\label{Lem:NonZeroCoordinatesAfterRotation}
    Let $F\in\mathbb{C}[x_1,\ldots,x_m,y_1,\ldots,y_n]$ be a form fixed by the action of $O(n)$, and let $\mathbb{X}$ be a set of $r$ points apolar to $F$. Then, there exists a set $\mathbb{Y}$ of $r$ points apolar to $F$ such that, if
    $$[a_1:\ldots:a_m:b_1:\ldots:b_n]\in\mathbb{Y},$$
    then $b_j\neq 0$ for all $j$.
\end{lemma}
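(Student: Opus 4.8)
The plan is to exploit the $O(n)$-symmetry of $F$ to rotate the $y$-coordinates of the points in $\mathbb{X}$ so that none of them vanish, while keeping apolarity intact. Since $F$ is fixed by the action of $O(n)$, for any $g\in O(n)$ we have $g(F)=F$, and by the remark preceding the lemma, if $\mathbb{X}$ is apolar to $F$ then $g^T(\mathbb{X})$ is apolar to $g(F)=F$. Thus the set $\mathbb{Y}:=g^T(\mathbb{X})$ is automatically an apolar set of $r$ points for any choice of $g$, and the only remaining task is to select a single $g\in O(n)$ whose action clears all the vanishing $y$-coordinates simultaneously.

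First I would set up the genericity argument. Write the points of $\mathbb{X}$ as $P_i=[a_{i,1}:\cdots:a_{i,m}:b_{i,1}:\cdots:b_{i,n}]$ for $1\le i\le r$, and denote by $\mathbf{b}_i=(b_{i,1},\ldots,b_{i,n})$ the $y$-part of $P_i$. The action of $g\in O(n)$ on the $y$-coordinates is by the orthogonal matrix (more precisely $g^T$), so the new $y$-coordinates of $g^T(P_i)$ are the entries of $g^T\mathbf{b}_i^{\,T}$. I want to find $g$ such that every entry of every vector $g^T\mathbf{b}_i^{\,T}$ is nonzero. For each index pair $(i,j)$ the set of orthogonal matrices making the $j$-th entry of $g^T\mathbf{b}_i^{\,T}$ vanish is a proper Zariski-closed subset of $O(\mathbb{C},n)$: it is the locus where a fixed nonzero linear functional (the inner product of the $j$-th row with $\mathbf{b}_i$) vanishes, which is proper precisely because $\mathbf{b}_i\neq 0$.

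The key step is then the observation that $\mathbf{b}_i\ne 0$ for every $i$. This holds because no point of $\mathbb{X}$ can lie on $V(Y_1,\ldots,Y_n)$: such a point would have $\mathbf{b}_i=0$, so its linear form $L_i$ would lie in $\mathbb{C}[x_1,\ldots,x_m]$, and then $L_i^d$ would contribute nothing toward building $F$ along the $y$-directions; more directly, the projection $\mathbb{Y}_2$ of $\mathbb{X}$ from $V(Y_1,\ldots,Y_n)$ is only well-defined away from that center, and a point with $\mathbf{b}_i=0$ is precisely a point on the center of projection. (If a point of an apolar set lay on the center it could be discarded, contradicting minimality, or one argues directly from $I(\mathbb{X})\subset F^\perp$ using Lemma~\ref{Lem:PerpF(x)G(y)} that $V(Y_1,\ldots,Y_n)\cap\mathbb{X}=\emptyset$.) Granting $\mathbf{b}_i\ne 0$, each of the finitely many conditions ``$j$-th entry of $g^T\mathbf{b}_i^{\,T}=0$'' cuts out a proper closed subset of the irreducible variety $O(\mathbb{C},n)$ (or at least of its identity component $SO(\mathbb{C},n)$, which suffices).

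Finally I would conclude by a dimension/irreducibility count: a finite union of proper Zariski-closed subsets of an irreducible variety cannot cover the whole variety over the infinite field $\mathbb{C}$, so there exists $g\in O(n)$ avoiding all of them. For this $g$, the set $\mathbb{Y}=g^T(\mathbb{X})$ is apolar to $F$, has $r$ points, and every point $[a_1:\cdots:a_m:b_1:\cdots:b_n]\in\mathbb{Y}$ satisfies $b_j\ne 0$ for all $j$, as required. The main obstacle I anticipate is justifying cleanly that $\mathbf{b}_i\ne 0$ for all $i$, i.e.\ that no apolar point sits on the center of projection $V(Y_1,\ldots,Y_n)$; this is where one must invoke the structure of $F^\perp$ from Lemma~\ref{Lem:PerpF(x)G(y)} rather than treat it as automatic. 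Everything else is a routine genericity argument over $\mathbb{C}$.
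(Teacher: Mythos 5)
Your genericity argument is exactly the paper's proof. The paper defines, for each $P\in\mathbb{X}$ and each $j$, the locus $U(P,j)=\{g\in O(n)\mid b_j=0\}$, where $g^T(P)=[a_1:\ldots:a_m:b_1:\ldots:b_n]$, observes that it is cut out by a linear equation and hence is a proper subvariety, and picks $g$ outside the finite union $\bigcup_{P,j}U(P,j)$; your version (including the refinement of working in the irreducible component $SO(\mathbb{C},n)$) is the same argument. You have also correctly isolated the one delicate point, which the paper passes over in silence: properness of $U(P,j)$ requires the $y$-part $\mathbf{b}_i$ of each point to be nonzero, for if $\mathbf{b}_i=0$ then that point is fixed by every $g$ and $U(P_i,j)=O(n)$.

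The gap is in your justification of that key step. Apolarity places no constraint on the individual points of $\mathbb{X}$: if $\mathbb{X}_0$ is apolar to $F$ and $\mathbb{X}_0\subseteq\mathbb{X}$, then $I(\mathbb{X})\subseteq I(\mathbb{X}_0)\subseteq F^\perp$, so any superset of an apolar set is again apolar. In particular $\mathbb{X}=\mathbb{X}_0\cup\{[1:0:\cdots:0]\}$ is a legitimate set of points apolar to $F$ meeting $V(Y_1,\ldots,Y_n)$; hence your claim that one can argue ``directly from $I(\mathbb{X})\subset F^\perp$ using Lemma~\ref{Lem:PerpF(x)G(y)}'' that $\mathbb{X}\cap V(Y_1,\ldots,Y_n)=\emptyset$ is false, and for such an $\mathbb{X}$ no choice of $g$ can work, since the bad point is fixed by the whole group. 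Your other arguments do not repair this: a point of an apolar set need not ``contribute'' to any decomposition (its coefficient may be zero), so non-contribution is not a contradiction; and minimality is not a hypothesis of the lemma --- moreover, even adding it would not suffice in the stated generality, since for the $O(n)$-invariant form $x_1^2+y_1^2+\cdots+y_n^2$ the set of coordinate points is a minimal apolar set containing $[1:0:\cdots:0]$. What actually excludes points of the center, in every application the paper makes of this lemma, is the forbidden-locus inclusion $V(Y_1,\ldots,Y_n)\subseteq\mathcal{F}_F$, obtained from $e$-computability via Remark~\ref{Remark:eComputability&ForbiddenLocus} for the specific forms studied; it is a theorem about those forms, not a consequence of apolarity. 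In fairness, the paper's own proof tacitly assumes $\mathbb{X}\cap V(Y_1,\ldots,Y_n)=\emptyset$ as well, so your attempt shares its defect; but the clean fix is to add this intersection condition as a hypothesis (it holds whenever the lemma is invoked), rather than to assert, as you do, that it follows from $I(\mathbb{X})\subset F^\perp$, from which it does not.
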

\begin{proof}
For $P\in\mathbb{X}$ and $g\in O(n)$ we set
$$g^T(P)= [a_1:\ldots:a_m:b_1:\ldots:b_n]$$
and we define
$$U(P,j)=\{g\in O(n) \mid  b_j=0 \}.$$
Note that $U(P,j)$ is defined by a linear equation in $O(n)$. In particular, $U(P,j)$ is a proper subvariety of $O(n)$.
If we let
$$U=\bigcup_{P\in\mathbb{X},1\leq j\leq n} U(P,j),$$
then $U$ is a proper subvariety of $O(n)$. Let $g\in O(n)\setminus U$ and let
$$\mathbb{Y}=g^T(\mathbb{X}).$$
By construction $\mathbb{Y}$ is a set of $r$ points with the required property and the proof is now completed.
\end{proof}

We will often deal with finite set of points in general linear position, that is each subset has the maximal possible linear span. Thus we recall the following basic fact:
\begin{remark}\label{Lem:IdealOfNpointsGeneralPosition}  If $\mathbb{Y} = \{P_1,\ldots,P_n\}$ be a set of $n$ points in general position in $\mathbb{P}^{n-1},$ then there exist linearly independent linear forms $L_1,\ldots,L_n$  such that 
        \[
        I(\mathbb{Y}) = (L_iL_j \mid 1 \leq i <j \leq n).
        \]
    
\end{remark}

{
Next we prove a useful consequence of e-computability.
\begin{lemma}\label{Lemma:eComputability&ForbiddenLocus}
   If $F$ is e-computable by an ideal $I$, and $t \in I_e\neq 0$, then $V(I) \subseteq \mathcal{F}_F.$ 
\end{lemma}
\begin{proof}
    By the definition of e-computability, for any minimal apolar set $\mathbb{X}$ to $F$, $t$ is a non-zerodivisor on $T/I(\mathbb{X}):I.$ Hence we have the following exact sequence:
    \[
    0 \xrightarrow{\quad} \left(\frac{T}{I(\mathbb{X}):I} \right)_{i-e} \xrightarrow{\quad \cdot t \quad}  \left(\frac{T}{I(\mathbb{X}):I}\right)_i \xrightarrow{\quad\quad}  \left(\frac{T}{I(\mathbb{X}):I +(t)}\right)_i \xrightarrow{\quad} 0.
    \]
    Therefore, for $s \gg 0,$ we have 
  \begin{equation}\label{EQ:YnumberPoints}
      e \, \cdot \HF\left(\frac{T}{I(\mathbb{X}):I}, s\right) = \sum_{i=0}^s \HF\left(\frac{T}{I(\mathbb{X}):I+(t)}, i \right)
  \end{equation}
   Note that $I(\mathbb{X}):I$ is a saturated ideal. Let $\mathbb{Y} = V(I(\mathbb{X}):I)$ that is the set of points in $\mathbb{X}$ that do not lie on $V(I)$. Thus $\mathbb{Y}\subseteq \mathbb{X}$. 
   Moreover, for $s \gg 0,$ 
   \begin{equation}\label{EQ:Y}
       \mid \mathbb{Y} \mid = \HF\left(\frac{T}{I(\mathbb{X}):I}, s\right).
   \end{equation}
   Since $F$ is e-computable by $I$ and $t \in I_e$  for $s \gg 0,$ we have
   \begin{equation}\label{Eq:eCompX}
       \mid \mathbb{X} \mid = \rk F = \frac{1}{e}\sum_{i=0}^s \HF\left(\frac{T}{I(\mathbb{X}):I+(t)}, i \right) =\frac{1}{e} \sum_{i=0}^s \HF\left(\frac{T}{F^{\perp}:I+(t)}, i \right) 
   \end{equation}
   for any minimal apolar set $\mathbb{X}$ to $F.$
   By Equations \eqref{EQ:Y} and \eqref{Eq:eCompX} we obtain
   $\mathbb{Y} = \mathbb{X}.$
   Therefore $\mathbb{X} \cap V(I) = \emptyset$ for any minimal apolar set $\mathbb{X}$ to $F.$ Hence we have proved the result.
\end{proof}
}

 {In the next proposition we compute the Waring rank of the forms of the type  $F = x_1^{a_1}x_2^{a_2} \cdots x_m^{a_m}(y_1^b+\cdots+y_n^b),$ where $m,n \geq 1,$ $ b \geq 2$, and $a_1 \leq a_2 \leq \cdots \leq a_m$ with $a_1+1 \geq b$. Note that this is includes the case $F= (x_1^2+x_2^2)(y_1^2+\cdots+y_n^2)$.}

{\begin{proposition}\label{Prop:RankOfForms}
     Let $F = x_1^{a_1}x_2^{a_2} \cdots x_m^{a_m}(y_1^b+\cdots+y_n^b),$ where $m,n \geq 1,$ $ b \geq 2$, and $a_1 \leq a_2 \leq \cdots \leq a_m$ with $a_1+1 \geq b$. Then 
     \[ 
     \rk F = (\prod_{i =1}^m (a_i + 1))n.
     \]
\end{proposition}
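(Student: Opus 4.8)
The plan is to prove the equality by establishing the two inequalities $\rk F\le(\prod_{i=1}^m(a_i+1))n$ and $\rk F\ge(\prod_{i=1}^m(a_i+1))n$ separately. Throughout I write $M=x_1^{a_1}\cdots x_m^{a_m}$ and $G=y_1^b+\cdots+y_n^b$, so that $F=MG$. Since $M$ and $G$ involve disjoint sets of variables, \cite[Proposition 3.1]{DFP20-ApolarAlgebra} (exactly as used in Lemma \ref{Lemma:ProjectionisApolar}) gives $F^\perp=(X_1^{a_1+1},\ldots,X_m^{a_m+1})+G^\perp$, where, just as in Lemma \ref{Lem:PerpF(x)G(y)}, the Fermat-type annihilator is $G^\perp=(\{Y_iY_j\}_{i<j},\{Y_1^b-Y_i^b\}_{2\le i\le n})$. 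The hypothesis $a_1+1\ge b$ will enter through the resulting inequality $b-1\le a_i$ for every $i$.

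For the upper bound I would exhibit an explicit reduced apolar set. Consider the ideal
\[
J=(\{Y_iY_j\}_{1\le i<j\le n},\,X_1^{a_1+1}-R_1,\ldots,X_m^{a_m+1}-R_m),
\]
where each $R_i=\sum_{k=1}^n c_{i,k}Y_k^{a_i+1}$ is a pure-power form of degree $a_i+1$ chosen inside $G^\perp$: when $a_i+1>b$ every such form lies in $G^\perp$, while when $a_i+1=b$ membership amounts to the single condition $\sum_k c_{i,k}=0$, so that (because $n\ge 2$) one may still take all $c_{i,k}\ne 0$. Then $J\subseteq F^\perp$ for free, since each $X_i^{a_i+1}$ is a generator of $F^\perp$ and each $R_i\in G^\perp\subseteq F^\perp$. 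The point is that $J$ is radical and cuts out exactly $(\prod_i(a_i+1))n$ points: the generators $Y_iY_j$ force the support onto $\bigcup_k\Lambda_k$, where $\Lambda_k=V(Y_j:j\ne k)\cong\mathbb{P}^m$, and on $\Lambda_k$ the ideal restricts to the complete intersection $(X_i^{a_i+1}-c_{i,k}Y_k^{a_i+1})_{1\le i\le m}$ of $\prod_i(a_i+1)$ distinct reduced points (each binomial is squarefree as $c_{i,k}\ne 0$). Summing over the $n$ disjoint slices and invoking the Apolarity Lemma \ref{Lemma:Apolarity} yields the bound. I would verify radicality and cardinality either by a Gröbner basis argument as in Proposition \ref{Prop:HFfor2factors} or by checking the primary decomposition $J=\bigcap_k(Y_j:j\ne k,\,\{X_i^{a_i+1}-c_{i,k}Y_k^{a_i+1}\}_i)$ directly; the containment $c_{i,k}Y_k^{a_i+1}\in(Y_k^b)$ used on each $\Lambda_k$ is exactly where $a_i+1\ge b$ is needed.

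For the lower bound I would apply the e-computability estimate, Theorem \ref{Thm:Ecomputability}, with $I=(Y_1,\ldots,Y_n)$ (so $e=1$) and a generic linear form $t=\sum_k\gamma_k Y_k\in I_1$. By \cite[Lemma 3.2]{CCCGW18}, $F^\perp:I=\bigcap_k(Y_k\circ F)^\perp=\operatorname{Ann}(W)$ with $W=M\langle y_1^{b-1},\ldots,y_n^{b-1}\rangle$. Writing $A_W=T/\operatorname{Ann}(W)$ and letting $D=T\circ W$ be the inverse system generated by $W$, commutativity of the apolarity action gives $\operatorname{Ann}(W)=\operatorname{Ann}(D)$, so Macaulay duality identifies $A_W$ with $D^\vee$ and identifies multiplication by $t$ on $A_W$ with the contraction $t\circ(-)$ on $D$. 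Hence, for generic $t$,
\[
\sum_{i\ge0}\HF\!\big(T/(F^\perp:I+(t)),i\big)=\dim_{\mathbb C}\!\big(A_W/tA_W\big)=\dim_{\mathbb C}\ker\big(t\circ(-)\colon D\to D\big).
\]
Now $D$ is spanned by the monomials dividing some $My_k^{b-1}$, which gives $D=\bigoplus_{0\le c\le a}x^c\,U$ with $U=\langle 1\rangle\oplus\bigoplus_{k}\langle y_k,\ldots,y_k^{b-1}\rangle$; since $t\circ(-)$ fixes each factor $x^c$ and acts as $\sum_k\gamma_k\partial_{y_k}$ on $U$, the kernel splits and $\dim\ker(t\circ(-)|_D)=(\prod_i(a_i+1))\cdot\dim\ker(\partial_t|_U)$. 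A direct computation shows $\partial_t$ maps $U$ onto $\langle 1\rangle\oplus\bigoplus_k\langle y_k,\ldots,y_k^{b-2}\rangle$, whence $\dim\ker(\partial_t|_U)=n$. Choosing, for each minimal apolar $\mathbb{X}$, a generic $t$ that is simultaneously a nonzerodivisor on $T/(I(\mathbb{X}):I)$, Theorem \ref{Thm:Ecomputability} then gives $\rk F\ge(\prod_i(a_i+1))n$.

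The main obstacle I anticipate is the upper bound: one must confirm that the explicit generators define a \emph{reduced} scheme of the expected length $(\prod_i(a_i+1))n$ rather than something shorter or nonreduced, and that the $c_{i,k}$ can be chosen to meet both the $G^\perp$-membership constraints (active precisely when $a_i+1=b$) and the nonvanishing needed for reducedness on each $\Lambda_k$; this is exactly where $a_1+1\ge b$ and $n\ge 2$ are used. By contrast, the lower-bound computation is essentially bookkeeping once the inverse system $D$ and the kernel of $\partial_t$ on $U$ are correctly identified.
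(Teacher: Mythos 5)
Your proposal is correct and has the same two-step architecture as the paper's proof: a lower bound via the $e$-computability theorem with $I=(Y_1,\ldots,Y_n)$ and a generic $t\in I_1$, and an upper bound via an explicit reduced apolar set supported on the $n$ coordinate slices. The upper bounds are essentially identical: the paper's generator $Q_i=X_i^{a_i+1}+(Y_1^{a_i+1-b}+\cdots+Y_n^{a_i+1-b})((-n+1)Y_1^b+Y_2^b+\cdots+Y_n^b)$ reduces, modulo the monomials $Y_jY_l$, to a member of your family $X_i^{a_i+1}-R_i$, and both proofs assert rather than prove the primary decomposition into $n$ slices of $\prod_i(a_i+1)$ reduced points; your suggested Gr\"obner verification actually closes this gap immediately, since all pairs of generators have coprime leading monomials, so they already form a Gr\"obner basis with initial ideal $(\{Y_iY_j\},\{X_i^{a_i+1}\})$. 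Where you genuinely diverge is the lower bound: the paper writes down explicit generators of $F^\perp:I+(t)$ (without justification) and counts standard monomials via the combinatorial function $C(m,s,\mathbf{a})$, whereas you never compute the colon ideal at all, identifying $F^\perp:I=\mathrm{Ann}(W)$ with $W=\langle My_k^{b-1}\rangle_k$ and using Macaulay duality to turn $\sum_i\HF(T/(F^\perp:I+(t)),i)$ into $\dim\ker(t\circ(-))$ on the inverse system $D$, which splits into $\prod_i(a_i+1)$ copies of an $n$-dimensional kernel. Your route replaces the paper's unproved colon-ideal identification with a structural duality argument (and you are also more careful about the nonzerodivisor hypothesis of Theorem \ref{Thm:Ecomputability}); the paper's count is more elementary. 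One caveat concerning the statement rather than your argument: for $n=1$ with $a_1+1=b$ the claimed formula is false ($F=xy^2$ has rank $3$, not $2$), and both constructions degenerate there --- yours because $\sum_k c_{i,k}=0$ forces $c_{i,1}=0$, the paper's because the factor $(-n+1)$ vanishes; your explicit flagging that $n\ge 2$ is needed for the choice of the $c_{i,k}$ is a point in your favor.
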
}
\begin{proof} 
        We use $e$-computability to obtain a lower bound.
        Consider $I = (Y_1, Y_2, \ldots, Y_n)$ and $t = \sum_{i=1}^n \alpha_iY_i$. Let $\tilde{I} = F^{\perp} : I +(t).$
        Then 
        \begin{equation}\label{EQ:tildeIrank}
            \tilde{I} = (\{X_i^{a_i+1}\}_{1\leq i \leq n}, \{Y_iY_j\}_{1\leq i < j \leq n}, \{Y_i^b\}_{1 \leq i \leq n}, \sum_{i=1}^n \alpha_iY_i).
        \end{equation}
        We want to compute the Hilbert function of $T/\tilde{I}$. For $l \geq 0$, and $l+1 <b,$
        
    \begin{multline}\label{Eq:HFComputation}
        \HF(T/\tilde{I}, l+1) = (\text{No. of linearly independent degree } l\text{ monomials in } T/\tilde{I} \text{ involving only } x_i'\text{s})(n-1) \\ + (\text{No. of linearly independent degree } l+1 \text{ monomials in } T/\tilde{I} \text{ involving only } x_i'\text{s}).
    \end{multline}
            Let ${\bf{a}} = (a_1, a_2, \ldots, a_n)$. Define $$C(m ,s, {\bf{a}}) = \mid \{ (l_1, l_2, \ldots, l_m) \mid  \sum_{i=1}^m l_i = s \text{ and } l_i \leq a_i\} \mid.$$ 
            Here $C(m ,s, {\bf{a}})$ counts the number of linearly independent monomials of degree $s$ in $T/\tilde{I}$ involving only $x_i$'s. By Equation \eqref{Eq:HFComputation}, for all $l\geq 0$,
            \begin{equation}\label{Eq:HFComb}
                \HF(T/\tilde{I}, l+1 ) = C(m, l, {\bf a})(n-1) + C(m, l+1, {\bf a}).
            \end{equation}

    Further, $\HF(T/\tilde{I}, (\sum_{i=1}^m a_i)+1) = 0.$ Let $\alpha =\sum_{i=1}^m a_i$.
    We have

\begin{equation}\label{EQ:RankLowerBound}
    \begin{split}
        \sum_{l =0}^{\alpha+1} \HF(T/\tilde{I}, l)&  = 1 +  \sum_{l =1}^{\alpha+1} \HF(T/\tilde{I}, l) = 1 + \sum_{l =0}^{\alpha} \HF(T/\tilde{I}, l+1)\\
        & = 1 + \sum_{l =0}^{\alpha}  \left(C(m, l, {\bf a})(n-1) + C(m, l+1, {\bf a})\right) \quad (\text{by Equation } \eqref{Eq:HFComb}) \\
        & =   n \sum_{l =0}^{\alpha}   C(m, l, {\bf a}) = n \prod_{i=1}^m (a_i+1)
        \end{split}
\end{equation}
   Therefore, by Equation \eqref{EQ:RankLowerBound}, 
   \begin{equation}\label{EQ:rankEq1}
       \rk F \geq \sum_{l \geq 0} \HF(T/ \tilde{I}, l) = n \prod_{i=1}^m (a_i+1),
   \end{equation}
  where the first inequality holds by \cite[Theorem 3.3]{CCCGW18}.
   Further, the ideal 
   \[ I({\mathbb{X}}) = ( \,\{Y_iY_j\}_{1 \leq i <j \leq n}, \, Q_1, Q_2, \ldots, Q_m)\] where $Q_i = X_i^{a_i+1} + (Y_1^{a_i+1-b} + \cdots +Y_n^{a_i+1-b})((-n+1)Y_1^b+ Y_2^b+\cdots+Y_n^b)$ is contained in $F^{\perp}$. We have 
   \[
   I({\mathbb{X}}) = \bigcap_{i=1}^n \, (Y_1, Y_2, \cdots, \hat{Y_i} \cdots Y_n, \, q_1, \, q_2, \ldots, \, q_m), 
   \]
   where $q_j = X_j^{a_j+1}+Y_i^{a_i+1}$ for all $1 \leq j \leq m$, for all $2 \leq i \leq n$, and for $i=1,$ for all $1 \leq j \leq m$, $q_j = X_j^{a_j+1}+(-n+1)Y_i^{a_i+1}$. Note that over the field of complex numbers, $q_j$ split into linear forms, giving the primary decomposition of $I({\mathbb{X}})$. Hence $I({\mathbb{X}})$ is an ideal of $n \prod_{i=1}^m (a_i+1)$ distinct points.
   Therefore by the Apolarity Lemma \ref{Lemma:Apolarity}, 
   \begin{equation}\label{EQ:rankEq2}
       \rk F \leq n \prod_{i=1}^m (a_i+1).
   \end{equation}
  Therefore, by inequalities \eqref{EQ:rankEq1} and \eqref{EQ:rankEq2}, we have obtained the result.
\end{proof}

\section{The form $x^2(y_1^2+\cdots+y_n^2)$}
In this section, we focus on the case  $F=x^2(y_1^2+\cdots+y_n^2)$ with $n\geq 2$. Thus we consider the rings $S=\mathbb{C}[x,y_1,\ldots,y_n]$ and $T=\mathbb{C}[X,Y_1,\ldots,Y_n]$. We recall that, by {\cite[Proposition 4.4]{CCCGW18}}, we know that $\rk F = 3n$.

We will now proceed to investigate all minimal apolar sets of $F$. Our results are visually summarized in Figure \ref{Picture:PointsOnLines}.

\begin{proposition}\label{Prop:structurex^2}
    Let $F=x^2(y_1^2+\cdots+y_n^2)$ with $n\geq 2$ and let $\mathbb{X}$ be a set of $\rk F=3n$ distinct points apolar to $F$. Then the following holds:
        \begin{enumerate}
            \item[i)] If $\mathbb{Y}$ is the projection of $\mathbb{X}$ from $V(Y_1,\ldots,Y_n)$ on $V(X)$, then $\mathbb{Y}$ is a set of $n$ distinct points in linear general position, that is
                \[
                \begin{array}{r|c|c|c}
                i & 0 & 1 & 2 \\
                \hline
                \HF(\mathbb{Y},i) & 1 & n & n \\
                     
                \end{array},
                \]
                    and $\HF(\mathbb{Y},i)=n$ for $i\geq 3$.
            \item[ii)] Let $\mathbb{Y}=\{Q_1,\ldots,Q_n\}$ and $P=[1:0: \ldots :0]$. If $\Lambda_i=\langle  Q_i,P \rangle,$  where $\langle Q_i,P\rangle$ denotes the linear span of $Q_i$ and $P$, then $\mathbb{X}\cap\Lambda_i$ is a set of three distinct points for $1\leq i\leq n$. \end{enumerate}

\end{proposition}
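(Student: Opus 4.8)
The plan is to compute $\HF(\mathbb{X},2)$ exactly, transfer it to the projection, and then read off both $|\mathbb{Y}|$ and the fibre sizes. First I would invoke Lemma~\ref{Lemma:ProjectionisApolar} to see that $\mathbb{Y}$ is apolar to $G=y_1^2+\cdots+y_n^2$, so $I(\mathbb{Y})\subseteq G^{\perp}$; as $(G^{\perp})_1=0$ this forces $\HF(\mathbb{Y},1)=n$, i.e.\ $\mathbb{Y}$ spans $V(X)\cong\mathbb{P}^{n-1}$ and $|\mathbb{Y}|\geq n$. The crucial observation is that, by Lemma~\ref{Lem:PerpF(x)G(y)} with $m=1$, the degree-two piece $(F^{\perp})_2=\langle Y_1^2-Y_i^2,\,Y_iY_j\rangle$ involves only the $Y$-variables (the $x$-generator $X^3$ sits in degree $3$). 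Since $I(\mathbb{X})_2\subseteq (F^{\perp})_2\subseteq\mathbb{C}[Y_1,\dots,Y_n]_2$, every quadric through $\mathbb{X}$ is a $Y$-form, so $I(\mathbb{X})_2=(I(\mathbb{X})\cap\mathbb{C}[Y_1,\dots,Y_n])_2$, which by Lemma~\ref{Lem:ProjectionOverSetOfVariablesSaturated} equals $I(\mathbb{Y})_2$. Comparing dimensions gives the clean relation $\HF(\mathbb{X},2)=\HF(\mathbb{Y},2)+(n+1)$.

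Now I would squeeze $\HF(\mathbb{X},2)$ between two bounds. For the upper bound I apply Lemma~\ref{Lem:UpperboundHF} with $d=4$, $a=2$, $b=0$, $g=1$ and $V=(F^{\perp}/I(\mathbb{X}))_2$: here $\dim V=\HF(\mathbb{X},2)-\HF(T/F^{\perp},2)=\HF(\mathbb{X},2)-(n+2)$, so the lemma yields $\HF(\mathbb{X},2)\leq 3n-\dim V$, that is $\HF(\mathbb{X},2)\leq 2n+1$. For the lower bound, $\mathbb{Y}$ is a finite point set, so $\HF(\mathbb{Y},2)\geq\HF(\mathbb{Y},1)=n$, whence $\HF(\mathbb{X},2)\geq 2n+1$. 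Thus equality holds throughout and $\HF(\mathbb{Y},1)=\HF(\mathbb{Y},2)=n$. Because the first difference of the Hilbert function of a reduced zero-dimensional scheme cannot become positive again once it vanishes, this plateau propagates, giving $\HF(\mathbb{Y},d)=n$ for all $d\geq 1$; hence $|\mathbb{Y}|=n$ and $\mathbb{Y}$ is a set of $n$ points in linear general position, which is part~(i).

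For part~(ii), $|\mathbb{Y}|=n=\rk G$ makes $\mathbb{Y}$ a minimal apolar set of $G$, so by Lemma~\ref{Lem:OrthonalMatrixUsingSumOfsquare} I may choose orthogonal coordinates in which $Q_i=[0:e_i]$ and $\Lambda_i$ is the $(x,y_i)$-line. I first rule out $P\in\mathbb{X}$: otherwise each remaining point would lie on a unique $\Lambda_i$, grouping the Waring decomposition by lines would give $F=\alpha x^4+\sum_i G_i$ with $G_i\in\mathbb{C}[x,y_i]_4$, and the rank computation below would force $\sum_i\rk G_i=3n$ while $\sum_i|\mathbb{X}\cap\Lambda_i|=3n-1$, a contradiction. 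With $P\notin\mathbb{X}$ one has $F=\sum_i G_i$ and $|\mathbb{X}\cap\Lambda_i|\geq\rk G_i$. An isolated-monomial comparison — the only monomial of $F$ lying in $\mathbb{C}[x,y_i]$ is $x^2y_i^2$ — forces $G_i=\mu_i x^4+x^2y_i^2$, whose middle catalecticant is nonsingular and whose only repeated root has multiplicity $2$, so $\rk G_i=3$. Hence every fibre has at least three points, and since $\sum_i|\mathbb{X}\cap\Lambda_i|=3n$ over $n$ lines, each fibre consists of exactly three distinct points.

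The step I expect to be the main obstacle is the upper bound $\HF(\mathbb{X},2)\leq 2n+1$: the real work lies in choosing the subspace $V$ and checking the hypotheses of Lemma~\ref{Lem:UpperboundHF}, together with the observation that $(F^{\perp})_2$ is purely in the $Y$-variables — a feature special to $m=1$ — which is precisely what allows the degree-two data to pass back and forth between $\mathbb{X}$ and its projection $\mathbb{Y}$.
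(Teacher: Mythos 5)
Your proof is correct, but it takes a genuinely different route from the paper's in both parts. For (i), the paper first uses the $O(n)$-rotation of Lemma \ref{Lem:NonZeroCoordinatesAfterRotation} to make $Y_1$ a nonzerodivisor on $T/I(\mathbb{X})$ and then applies Lemma \ref{Lem:UpperboundHF} with $g=Y_1$, $V=\langle Y_2,\ldots,Y_n\rangle$, $a=b=1$, obtaining $\HF(\mathbb{X},2)\leq 3n-(n-1)=2n+1$ directly; your choice $g=1$, $b=0$, $V=(F^{\perp}/I(\mathbb{X}))_2$ is legitimate (a unit is a nonzerodivisor, and $V$ tautologically lands in $F^{\perp}/I(\mathbb{X})$) and yields the self-referential inequality $\HF(\mathbb{X},2)\leq 3n-\HF(\mathbb{X},2)+(n+2)$, which resolves to the same bound while dispensing with the rotation entirely. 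Both arguments then exploit that $(F^{\perp})_2\subset\mathbb{C}[Y_1,\ldots,Y_n]_2$ to transfer degree-two information to the projection and conclude by monotonicity of the Hilbert function of reduced points; as a bonus, your version already pins down $\HF(\mathbb{X},2)=2n+1$, which the paper only establishes later in Proposition \ref{Prop:HFofX2SumofSquares}. For (ii), the paper stays coordinate-free: it forms the hyperplane $\mathcal{H}_i$ spanned by $P$ and the $Q_j$, $j\neq i$, computes $F^{\perp}:I(\mathcal{H}_i)=(2x^2L_i)^{\perp}$ via \cite[Lemma 3.2]{CCCGW18}, gets $|\mathbb{X}\cap\Lambda_i|\geq \rk(x^2L_i)=3$ from the monomial rank of \cite{CCG12}, and excludes $P\in\mathbb{X}$ by $e$-computability (Remark \ref{Remark:eComputability&ForbiddenLocus}); you instead normalize $Q_i=e_i$ via Lemma \ref{Lem:OrthonalMatrixUsingSumOfsquare}, split the Waring decomposition line by line, identify $G_i=\mu_ix^4+x^2y_i^2$ by monomial comparison, compute $\rk G_i=3$ from the classification of binary quartics, and exclude $P\in\mathbb{X}$ by the same count ($3n-1\geq 3n$, a contradiction), so no $e$-computability input is needed. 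The paper's colon-ideal method has the advantage of generalizing verbatim to $(x_1^2+x_2^2)(y_1^2+\cdots+y_n^2)$ in Proposition \ref{prop:projfromline}, where the fibres sit in planes and binary-form arguments are unavailable; yours is more elementary and self-contained. Two slips of phrasing, neither a gap: $I(\mathbb{X})_2$ equals $(I(\mathbb{Y})\cap\mathbb{C}[Y_1,\ldots,Y_n])_2$ rather than $I(\mathbb{Y})_2$ itself, which also contains $(X)_2$ — your count $\HF(\mathbb{X},2)=\HF(\mathbb{Y},2)+(n+1)$ correctly accounts for this; and when $\mu_i=0$ the quartic $G_i=x^2y_i^2$ has two double roots rather than one, though $\rk G_i=3$ still holds since no root has multiplicity three.
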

\begin{proof} We start by proving (i). By Lemma \ref{Lem:PerpF(x)G(y)}  we have
            \begin{equation}\label{EQ:PerpF}
                F^{\perp} = (X^3, \{Y_iY_j\}_{(1\leq i < j \leq n)}, \{Y_1^2-Y_i^2\}_{(2 \leq i \leq n)}).
            \end{equation}
    Let $I({\mathbb{X}})$ be an ideal of a minimal apolar set $\mathbb{X}$ of $F$.  Since $O(n)$ fixes $F$, by Lemma \ref{Lem:NonZeroCoordinatesAfterRotation} we may assume that  for $P_i \in \mathbb{X}$, $P_i = [b_i:a_{i,1}: \ldots :a_{i,n}]$ is such that $a_{i,1} \neq 0$ for $1\leq i \leq {3n}$.  Hence $Y_1$ is a non-zerodivisor in $T/I(\mathbb{X})$. Let $V := \langle Y_2, Y_3, \ldots, Y_n\rangle_{\mathbb{C}} \subset (T/I({\mathbb{X}}))_1$. Since $I(\mathbb{X}) \subset F^{\perp},$ we have that $I(\mathbb{X})_1 = 0.$ Thus $\dim V = n-1$, and $Y_1V \subset (F^{\perp}/I({\mathbb{X}}))$. Hence, by Lemma \ref{Lem:UpperboundHF}, we obtain that
        \begin{equation}\label{EQ:HFUB2n+1}
            \HF(\mathbb{X}, 2) \leq 3n - (n-1) = 2n +1,
        \end{equation}
    and that
        \begin{equation}\label{EQ:IXinDim2}
            \dim \, I(\mathbb{X})_2 \geq \binom{n+2}{2}-(2n+1)=\binom{n}{2}.
        \end{equation}
    Note that, by Equation \eqref{EQ:PerpF}, $F^{\perp}$
        does not involve the variable $X$ in any degree 2 terms.
   Thus we get
        \begin{equation}\label{EQ:IXEqualsIYDegree2}
            I({\mathbb{X}})_2 = (I(\mathbb{X}) \cap \mathbb{C}[Y_1, \ldots, Y_n])_2.
        \end{equation}
     Therefore by Equation \eqref{EQ:IXinDim2} we obtain
         \begin{equation}\label{EQ:DimIXcapYinDegree2}
             \dim (I(\mathbb{X}) \cap \mathbb{C}[Y_1, \ldots, Y_n])_2 \geq \binom{n}{2}.
         \end{equation}

    \noindent Let $\mathbb{Y}$ be the projection of $\mathbb{X}$ from the point $P=[1:0:\ldots:0]$ on the hyperplane $V(X)$. By Lemma \ref{Lem:ProjectionOverSetOfVariablesSaturated} we have
    \[I({\mathbb{Y}}) =(X)+ I({\mathbb{X}}) \cap \, \mathbb{C}[Y_1, \ldots, Y_n].\]
    Since $I({\mathbb{X}})_1 = 0,$ we get $I({\mathbb{Y}})_1=(X)_1$ and thus
     \[\HF(\mathbb{Y},1)=n.\]
    Since $I({\mathbb{Y}})_2= (X)_2+(I({\mathbb{X}}) \cap \mathbb{C}[Y_1, \ldots, Y_n] )_2$,  Equation \ref{EQ:DimIXcapYinDegree2} yields
        \begin{equation}
            \HF(\mathbb{Y},2)\leq\binom{n+2}{2}-\left((n+1)+\binom{n}{2}\right)=n.
        \end{equation}
    Since $\HF(\mathbb{Y},1)=n$ and $I(\mathbb{Y})$ is an ideal of reduced points, {by \cite{GMR83} we have
    \[
    \Delta \HF(\mathbb{Y}) = (1, \HF(\mathbb{Y}, 1) -  \HF(\mathbb{Y}, 0), \ldots,  \HF(\mathbb{Y}, i) -  \HF(\mathbb{Y}, i-1), \ldots )
    \]
    is an O-sequence. Hence we obtain $\HF(\mathbb{Y},2)= n.$ Moreover, by Macaulay's criterion (see \cite[Theorem 4.2.10]{BH98}) for O-sequences, $\Delta \HF(\mathbb{Y}, 1) = 0$ implies $\Delta \HF(\mathbb{Y}, l) = 0$ for all $l \geq 1.$} Hence we get the Hilbert function of $\mathbb{Y}$ as 
            \[
            \begin{array}{r|c|c|c|c}
                i & 0 & 1 & 2 & \cdots \\
                \hline
                \HF(\mathbb{Y},i) & 1 & n & n& n \\
                     
            \end{array}
                \]
    Therefore $\mathbb{Y}$ is a set of $n$ points in a general position. This concludes the proof of (i).

\vskip 2mm

\noindent We now prove (ii). 
Let $\mathbb{Y} = \{Q_1,\ldots,Q_n\}$. We have 
        \begin{equation*}
            \mathbb{X} \subset \bigcup_{i=1}^n \langle \, Q_i,P\rangle.
        \end{equation*}
    Let $\mathcal{H}_i= \langle P,Q_1,\ldots, \hat{Q_i} ,\ldots, Q_n \rangle$ be the hyperplane generated by the linear span of $P$ and the points $Q_1,\ldots, \hat{Q_i} ,\ldots, Q_n.$ Note that, since $(F^\perp)_1=0$, $Q_i\notin \mathcal{H}_i$.
    If we let $L_i=\sum_{j=1}^n \alpha_{ij}Y_j.$ then 
        \begin{equation*}
                \begin{split}
                    F^{\perp} : I(\mathcal{H}_i)=F^{\perp} : L_i & = ((\alpha_{i1} Y_1 +  \cdots + \alpha_{in} Y_n) \circ F)^{\perp} \\
                    & = (2x^2(\alpha_{i1} y_1 +  \cdots + \alpha_{in} y_n))^{\perp},
                \end{split}
        \end{equation*}
    where the second equality holds by \cite[Lemma 3.2]{CCCGW18}. Note that 
         \begin{equation*}
           I(\mathbb{X} \cap \Lambda_i) \subseteq   I(\mathbb{X}):I(\mathcal{H}_i) \subseteq F^{\perp} : I(\mathcal{H}_i)=G^{\perp},
        \end{equation*}
    where $G=2x^2(\alpha_{i1} y_1 +  \cdots + \alpha_{in} y_{n}).$ {After a change of variables, $G$  is a binary monomial and hence $\rk G= 3$ by Sylvester's algorithm (see \cite{sylvester1851lx, Sylvester1851} or \cite[Remark 4.16]{CGO14}).}   Therefore each of the $n$ lines $\Lambda_i$ contains at least $3$ points of $\mathbb{X}.$ Note that $P$ is the only point in the intersection of any two lines and that $P \in \mathcal{F}_F$ by Lemma \ref{Lemma:eComputability&ForbiddenLocus}. Thus we have at least $3n$ points in the union of $n$ lines. Hence, since $\mid \mathbb{X} \mid = 3n$, each line contains exactly $3$ points. This concludes the proof of (ii).

{The proof is now completed}.
    
\end{proof}

We can now characterize the points that appear in a minimal apolar set to $F$.
\begin{theorem}\label{Thm:forbiddenLocusofX^2sumofSquares}
    If $F=x^2(y_1^2+\cdots+y_n^2)$, then the forbidden locus of $F$ is given by 
        \[
        \mathcal{F}_F=V(Y_1^2+\cdots+Y_n^2).
        \]
\end{theorem}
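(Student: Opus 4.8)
The plan is to establish the two inclusions $V(Y_1^2+\cdots+Y_n^2)\subseteq\mathcal F_F$ and $\mathcal F_F\subseteq V(Y_1^2+\cdots+Y_n^2)$ separately. For the second inclusion I would reformulate the goal as: every point $R=[b:c_1:\cdots:c_n]$ with $\sum_j c_j^2\neq 0$ occurs in some minimal apolar set, hence lies in $\mathcal W_F$.

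For $V(Y_1^2+\cdots+Y_n^2)\subseteq\mathcal F_F$ I would show that no point on the quadric can occur in a minimal apolar set, using only Proposition \ref{Prop:structurex^2} together with Lemma \ref{Lem:OrthonalMatrixUsingSumOfsquare}. Let $\mathbb X$ be a minimal apolar set and $R\in\mathbb X$. By Proposition \ref{Prop:structurex^2} the projection $\mathbb Y=\{Q_1,\dots,Q_n\}$ is apolar to $G=y_1^2+\cdots+y_n^2$, and $R$ lies on one of the lines $\Lambda_i=\langle Q_i,P\rangle$ with $R\neq P$ since $P\notin\mathbb X$. By Lemma \ref{Lem:OrthonalMatrixUsingSumOfsquare} the $Q_i$ are represented by the columns of a complex orthogonal matrix, so I may take $Q_i=[0:a_{i1}:\cdots:a_{in}]$ with $\sum_j a_{ij}^2=1$; in particular $Q_i\notin V(\sum_j Y_j^2)$. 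Writing $R=\lambda P+\mu Q_i$ with $\mu\neq 0$, the $Y_j$-coordinate of $R$ is $\mu a_{ij}$, whence $\sum_j R_j^2=\mu^2\sum_j a_{ij}^2=\mu^2\neq 0$, so $R\notin V(\sum_j Y_j^2)$. Thus $\mathcal W_F\cap V(\sum_j Y_j^2)=\emptyset$, which is the desired inclusion.

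For the reverse inclusion I would argue constructively. Fix $R$ with $\sum_j c_j^2\neq 0$. Since $F$ is fixed by the $O(n)$-action, I complete the nonisotropic vector $(c_1,\dots,c_n)$ to an orthonormal basis (possible exactly because $\sum_j c_j^2\neq0$) and change coordinates by the corresponding element of $O(n)$, reducing to $R=[b:c:0:\cdots:0]$ with $c\neq 0$. In these coordinates I build $\mathbb X=\bigsqcup_{i=1}^n\mathbb X_i$ with three points of $\mathbb X_i$ lying on the coordinate line $\Lambda_i$ (the $(x,y_i)$-line), chosen apolar to the binary quartic $h_i=x^2(\epsilon_i x^2+y_i^2)=x^2y_i^2+\epsilon_i x^4$. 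Since $\sum_i h_i=x^2\sum_i y_i^2+\bigl(\sum_i\epsilon_i\bigr)x^4$, the Apolarity Lemma \ref{Lemma:Apolarity} shows that $\mathbb X$ is apolar to $F$ provided $\sum_i\epsilon_i=0$. A direct computation of $h_i^\perp$ gives that the degree-$3$ part is the pencil $\langle X^3-6\epsilon_i XY_i^2,\ Y_i^3\rangle$; from this one reads off that $P$ never belongs to a size-$3$ apolar set of $h_i$, that every point of $\Lambda_i\setminus\{P,Q_i\}$ is realized for a suitable (possibly zero) $\epsilon_i$, and that $Q_i=[0:1]$ is realized precisely when $\epsilon_i\neq 0$. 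Hence, if $b\neq 0$ I place $R$ on $\Lambda_1$ with $\epsilon_1=0$, while if $b=0$ (so $R=Q_1$) I place $R$ on $\Lambda_1$ with any $\epsilon_1\neq 0$; in either case I then choose $\epsilon_2,\dots,\epsilon_n$ so that $\sum_i\epsilon_i=0$ and pick arbitrary size-$3$ apolar triples on the remaining lines. Distinct lines meet only at $P\notin\mathbb X$, so the $3n$ points are distinct, $\mathbb X$ is a minimal apolar set containing $R$, and $R\in\mathcal W_F$.

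The main obstacle is the gluing in the second inclusion. Naively one expects each $\mathbb X_i$ to be apolar to the monomial $x^2y_i^2$, but then every point of $\mathbb X$ would have nonzero $x$- and $y_i$-coordinate, so no point of $V(X)$ (in particular no $Q_i$) could appear, wrongly enlarging the forbidden locus. The resolution is to allow the correction term $\epsilon_i x^4$ on each line and impose the single global constraint $\sum_i\epsilon_i=0$; the technical heart is to verify that these corrected binary quartics still have rank $3$ and that their apolar triples realize exactly the points of $\Lambda_i\setminus\{P\}$, with $Q_i$ requiring $\epsilon_i\neq 0$. The first inclusion, by contrast, is essentially immediate once Proposition \ref{Prop:structurex^2} and the orthonormal normalization of Lemma \ref{Lem:OrthonalMatrixUsingSumOfsquare} are in hand.
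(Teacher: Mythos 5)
Your proof is correct, and its skeleton (projection plus the structure result for $V(Y_1^2+\cdots+Y_n^2)\subseteq\mathcal{F}_F$; $O(n)$-symmetry plus an explicit construction for the reverse inclusion) matches the paper's, but both halves are executed differently. For the first inclusion the paper projects $Q\in\mathbb{X}$ to $\mathbb{Y}$, notes $\mathbb{Y}$ is a minimal apolar set of $G=y_1^2+\cdots+y_n^2$, and quotes \cite[Proposition 3.1]{ForbiddenLocus} ($\mathcal{F}_G=V(Y_1^2+\cdots+Y_n^2)$); you instead normalize $\mathbb{Y}$ via Lemma \ref{Lem:OrthonalMatrixUsingSumOfsquare} and verify directly that every point of $\Lambda_i\setminus\{P\}$ lies off the quadric, which makes this half self-contained (one small attribution slip: apolarity of $\mathbb{Y}$ to $G$ comes from Lemma \ref{Lemma:ProjectionisApolar} combined with Proposition \ref{Prop:structurex^2}, not from the proposition alone, and $P\notin\mathbb{X}$ needs the counting remark that the $n$ lines meet only at $P$). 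For the second inclusion the paper keeps the target point $Q$ fixed up to cases, exhibits in each case one explicit ideal of $3n$ apolar points (an instance of Theorem \ref{thm:A}(v)) containing a seed point $P'$ or $P''$, and transports the seed onto $Q$ by an orthogonal matrix built from Lemma \ref{Lem:OrthonalMatrixUsingSumOfsquare}; you transport $Q$ onto a coordinate line instead and then glue size-$3$ apolar sets of the perturbed binary quartics $h_i=x^2y_i^2+\epsilon_ix^4$ under the single constraint $\sum_i\epsilon_i=0$. The two constructions are equivalent in substance: restricting the paper's cubic generators to the $i$-th coordinate line yields exactly square-free members of your pencils $\langle X^3-6\epsilon_iXY_i^2,\,Y_i^3\rangle$ with the $\epsilon_i$ summing to zero (case (i) has $\epsilon_i=-1/6$ for $i<n$, $\epsilon_n=(n-1)/6$; case (ii) has all $\epsilon_i=0$). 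What each buys: the paper's route is shorter because it only needs one example per case; your route yields a complete local picture of which points of each line $\Lambda_i$ can occur, explains precisely why reaching $Q_i$ forces $\epsilon_i\neq 0$, and handles both cases inside one framework. Your asserted computations ($h_i^\perp$ in degree $3$, rank $3$, the realizability statements, distinctness of the $3n$ points) all check out.
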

\begin{proof} Let $Q \in \mathcal{W}_F$ be a point of the Waring locus of $F$, that is there exists $\mathbb{X}$, a minimal apolar set to $F$, such that $Q\in\mathbb{X}$. Let  $\mathbb{Y}$ be the projection of $\mathbb{X}$ from $P=[1:0: \ldots :0]$ into $V(X)$. By Proposition \ref{Prop:structurex^2} we know that $\mathbb{Y}$ is a set $n$-points in general position. Thus $\mathbb{Y}$ is a minimal apolar set of $G=y_1^2+ \cdots +y_n^2.$  By Proposition \cite[Proposition 3.1]{ForbiddenLocus} we have $\mathcal{F}_G = V(Y_1^2+ \cdots +Y_n^2)$, thus $Q \notin V(Y_1^2+\cdots +Y_n^2).$ Hence we proved that
\[
\mathcal{F}_F \supseteq V(Y_1^2+ \cdots +Y_n^2).
\]
    
    We will now show that equality holds, that is, we prove that if $Q \notin V(Y_1^2+ \cdots Y_n^2)$, then $Q \in \mathcal{W}_F.$ We split the proof in two cases depending on the first coordinate corresponding to $X$ in $Q$ to be zero or not.\\
{
If the first coordinate of $Q$ is zero, then consider 
\[ I(\mathbb{X})= (\{Y_iY_j\}_{1 \leq i<j\leq n}, X^3+X(Y_1^2+ \cdots+Y_{n-1}^2+(-n+1)Y_n^2)).\]
Since the orthogonal action of $O(n)$ on the variables $Y_1, \ldots, Y_n$ with $\sum_{i=1}^n Y_i^2 \neq 0$, that is the action on $\mathbb{P}^{n-1} \setminus V(\sum_{i=1}^n{Y_i^2})$ has only one orbit, there exists $\sigma \in O(n)$ such that $\sigma(\mathbb{X}) = \mathbb{X}_1$ with $Q \in \mathbb{X}_1.$
\newline  If the first coordinate of $Q$ is non-zero, then consider
\[I(\mathbb{X}') = (\{Y_iY_j\}_{1 \leq i<j\leq n}, X^3+(- \sum_{i=1}^nY_i)(Y_1^2+ \cdots+Y_{n-1}^2+(-n+1)Y_n^2)).\]
Again, by the similar arguments above we get $Q \in \mathbb{X}_2$ apolar to $F$. Since $F$ is fixed by the action of orthogonal groups $O(n)$, the Waring locus and the forbidden locus are stable under the action of orthogonal group. Hence  result follows.
}
\end{proof}

We now focus on algebraic properties of minimal apolar sets to $F$: we begin by describing their Hilbert functions.

\begin{proposition}\label{Prop:HFofX2SumofSquares}
    If $F = x^2(y_1^2+ \cdots +y_n^2)$, $n \geq 2$, and $\mathbb{X}$ is a set of $3n$ points apolar to $F$, then the Hilbert function of $\mathbb{X}$ is
    \[
    \begin{array}{r|c|c|c|c|c|c}
        i & 0 & 1& 2&3 & 4& \cdots  \\
        \hline
        \HF(\mathbb{X}, i) & 1 & n+1 & 2n+1  & 3n & 3n & 3n.
    \end{array}
    \]
\end{proposition}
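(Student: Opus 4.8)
The plan is to compute $\HF(\mathbb{X},i)$ degree by degree, leaning on the geometric structure of $\mathbb{X}$ established in Proposition \ref{Prop:structurex^2}. The values in degrees $0$ and $1$ are immediate: $\HF(\mathbb{X},0)=1$, and since $I(\mathbb{X})\subseteq F^{\perp}$ while $(F^{\perp})_1=0$ by Lemma \ref{Lem:PerpF(x)G(y)}, we have $I(\mathbb{X})_1=0$, whence $\HF(\mathbb{X},1)=\dim_{\mathbb C} T_1=n+1$.

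For degree $2$ I would extract the exact value from the proof of Proposition \ref{Prop:structurex^2}. There it is shown that $I(\mathbb{X})_2=(I(\mathbb{X})\cap\mathbb{C}[Y_1,\ldots,Y_n])_2$ and that the projection $\mathbb{Y}$ satisfies $\HF(\mathbb{Y},2)=n$. By Lemma \ref{Lem:ProjectionOverSetOfVariablesSaturated} we have $I(\mathbb{Y})_2=(X)_2\oplus(I(\mathbb{X})\cap\mathbb{C}[Y_1,\ldots,Y_n])_2$ with $\dim_{\mathbb C}(X)_2=n+1$, so the identity $\dim_{\mathbb C} I(\mathbb{Y})_2=\binom{n+2}{2}-n$ forces $\dim_{\mathbb C}(I(\mathbb{X})\cap\mathbb{C}[Y_1,\ldots,Y_n])_2=\binom{n}{2}$. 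Hence $\dim_{\mathbb C} I(\mathbb{X})_2=\binom{n}{2}$ and $\HF(\mathbb{X},2)=\binom{n+2}{2}-\binom{n}{2}=2n+1$.

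The crux is degree $3$, where I must prove the lower bound $\HF(\mathbb{X},3)\geq 3n$; since $|\mathbb{X}|=3n$ already bounds the Hilbert function from above, this is exactly the statement that the $3n$ points impose independent conditions on cubics. Here I would use Proposition \ref{Prop:structurex^2}(ii) together with Remark \ref{Lem:IdealOfNpointsGeneralPosition}: after a linear change of coordinates (which preserves $\HF(\mathbb{X})$) fixing $P=[1:0:\cdots:0]$ and sending the general-position points $Q_i$ to the coordinate points of $V(X)$, the set $\mathbb{X}$ consists of three points on each of the $n$ coordinate lines $\ell_k=\langle P,[0:\cdots:1:\cdots:0]\rangle$, none of them equal to $P$ (recall $P\in\mathcal{F}_F$ by Theorem \ref{Thm:forbiddenLocusofX^2sumofSquares}, so $P\notin\mathbb{X}$). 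To separate a point $Q\in\mathbb{X}$ lying on $\ell_k$ from the remaining $3n-1$ points, let $Q',Q''$ be the other two points of $\mathbb{X}$ on $\ell_k$ and let $\ell',\ell''\in\mathbb{C}[X,Y_k]_1$ be linear forms vanishing at $Q'$ and $Q''$ respectively. Then the cubic $C=Y_k\,\ell'\,\ell''$ vanishes at every point of $\mathbb{X}\setminus\{Q\}$ --- on the other lines because $Y_k=0$ there, and at $Q',Q''$ by construction --- while $C(Q)\neq 0$, since $Q\neq P$ gives $Y_k(Q)\neq 0$ and $Q\notin\{Q',Q''\}$ gives $\ell'(Q)\ell''(Q)\neq 0$. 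As $Q$ ranges over $\mathbb{X}$, these cubics show the evaluation map $T_3\to\mathbb{C}^{3n}$ is surjective, so $\HF(\mathbb{X},3)=3n$.

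Finally, $\HF(\mathbb{X})$ is non-decreasing, since a general linear form is a non-zerodivisor on the one-dimensional Cohen--Macaulay ring $T/I(\mathbb{X})$, and it is bounded above by $|\mathbb{X}|=3n$; as it attains $3n$ already at $i=3$, it equals $3n$ for all $i\geq 3$. The main obstacle is the degree-$3$ lower bound, and the construction of the explicit separating cubics $Y_k\,\ell'\,\ell''$ is precisely what resolves it; everything else reduces to bookkeeping with the data of Proposition \ref{Prop:structurex^2}.
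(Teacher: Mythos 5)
Your proof is correct, and its interesting feature is that the crucial degree-$3$ step takes a genuinely different route from the paper. For degrees $0$, $1$, $2$ and the stable range your argument matches the paper's in substance: the paper proves $\HF(\mathbb{X},2)=2n+1$ by contradiction, you prove it by a direct dimension count, but both rest on the same two facts from Proposition \ref{Prop:structurex^2}, namely $I(\mathbb{X})_2=(I(\mathbb{X})\cap\mathbb{C}[Y_1,\ldots,Y_n])_2$ and $\HF(\mathbb{Y},2)=n$. In degree $3$, however, the paper argues by contradiction: assuming $\HF(\mathbb{X},3)\leq 3n-1$, it takes $W=I(\mathbb{X})_3$, passes to the subspace $W'$ of cubics through $P$ (each such cubic has four zeros on every line of $\mathcal{L}$, hence vanishes on all of $\mathcal{L}$), imposes $\binom{n+2}{3}-n$ further general points of $V(X)$ to force divisibility by $X$, and thereby produces at least $\binom{n}{2}+1$ linearly independent quadrics vanishing on $\mathcal{L}$, contradicting the fact that $I(\mathcal{L})$ is generated by the $\binom{n}{2}$ products $L_iL_j$. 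You instead prove the equivalent statement that the $3n$ points impose independent conditions on cubics \emph{constructively}: after normalizing the $n$ lines to the coordinate lines (harmless, as the Hilbert function is a projective invariant), the separator $C=Y_k\,\ell'\,\ell''$ vanishes on $\mathbb{X}\setminus\{Q\}$ but not at $Q$, because $Y_k$ kills all points on the other lines, $\ell',\ell''$ kill the two remaining points on $Q$'s line, and none of the three factors vanishes at $Q$ (using $Q\neq P$, which both proofs need and which follows from $P\in\mathcal{F}_F$; your citation of Theorem \ref{Thm:forbiddenLocusofX^2sumofSquares} is legitimate since it precedes this proposition and does not depend on it). Both arguments consume the same geometric input, Proposition \ref{Prop:structurex^2}(ii); your separator construction is the more elementary and self-contained of the two, avoiding the contradiction, the general-point count, and the appeal to the structure of $I(\mathcal{L})$, while the paper's dimension count is the kind of argument that can survive in situations where explicit separators are awkward to write down.
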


\begin{proof}
    Since $I(\mathbb{X}) \subset F^{\perp}$ and  $(F^\perp)_1 = 0$ we obtain $\HF(\mathbb{X},1)=n+1$. Moreover, we claim that $\HF(\mathbb{X},2)= 2n+1$.
    \begin{proof}[Proof of the claim]
    {Note that by Equation \eqref{EQ:HFUB2n+1} we have that $\HF(\mathbb{X},2) \leq 2n+1.$} By contradiction we assume that $\HF(\mathbb{X},2)\leq 2n$ and we use the notations of the proof of Proposition \ref{Prop:structurex^2}. From the assumption we get that $\dim I({\mathbb{X}})_2 \geq \binom{n+2}{2} - 2n.$ Since $F^{\perp}_2$ does not involve any forms in the variable $X,$ so is $I({\mathbb{X}})_2$. Therefore, $\dim I({\mathbb{X}})_2 \cap \mathbb{C}[Y_1,Y_2, \ldots, Y_n] \geq \binom{n+2}{2} - 2n.$
    
    We obtain $\HF(\mathbb{Y}, 2) \leq n-1$, a contradiction.
    \end{proof}
    We now prove by contradiction that $\HF(\mathbb{X},3)=3n$. Since $\mathbb{X}$ is a set of $3n$ points, we have that $\HF(\mathbb{X},3)\leq 3n$. Thus, we assume by contradiction that $\HF(\mathbb{X},3)\leq 3n-1$. Let $W=I(\mathbb{X})_3$. Then
    \begin{equation}\label{Equation:dimW}
          \dim W=\dim I(\mathbb{X})_3\geq \binom{n+3}{3}-3n+1.
    \end{equation}
    By Proposition \ref{Prop:structurex^2}(ii) we know that $\mathbb{X}$ is supported on $\mathcal{L}$ where $\mathcal{L}$ is the union of $n$ lines passing through the point $P=[1:0:\ldots:0]$ and each line in $\mathcal{L}$ contains exactly $3$ points of $\mathbb{X}$.
    Let $W' \subseteq W$ such that every element of $W'$ passes through the point $P$. Hence 
    \begin{equation}\label{Equation:dimW'}
        \dim W'\geq \dim W-1.
    \end{equation}
    Note that $W' \subset T_3$ and that each of its elements vanishes on $4$ points of each line of $\mathcal{L}$. Thus $W'$ vanishes on $\mathcal{L}.$
    Let $W'' \subset W'$ be the set of all elements passing through $\binom{n+2}{3}-n$ general points of the hyperplane $V(X)$. Note that
    Moreover,
        \begin{equation}\label{EQ:DimV''}
            \begin{split}
                \dim W'' & \geq \dim W'-\binom{n+2}{3}+n  \geq \binom{n}{2}+1 
            \end{split}
        \end{equation}
    where the last inequality follows from Equations \eqref{Equation:dimW} and \eqref{Equation:dimW'}. Since $\mathcal{L}\cap V(X)$
    consists of $n$ points, each element of $W''$ has $X$ as a factor.
    Thus, we obtain
    \[W''=\langle XQ_1,\ldots XQ_j\rangle\]
    where $j\geq\binom{n}{2}+1$ and the forms $Q_i$ are linearly independent degree two forms. Since $V(X)\not\supset\mathcal{L},$ we have 
    \[
    V(Q_1,Q_2,\ldots,  Q_j)\supset\mathcal{L}
    \]
    for some $j\geq\binom{n}{2}+1$. Thus, $Q_1,\ldots,Q_j\in I(\mathcal{L})$. Note that $\mathcal{L}$ is the non-degenerate union of $n$ lines through $P$, thus
    \[
    I(\mathcal{L})=(L_iL_j:1\leq i<j \leq n),\]
    where the linear forms $L_i$ do not involve $X$.
    In particular, the ideal of $\mathcal{L}$ is generated by $\binom{n}{2}$ degree two forms. Thus a contradiction, since the forms $Q_i$ are linearly independent and hence $\HF(\mathbb{X}, 3) = 3n.$
    Since ${\mathbb{X}}$ is a set of $3n$ reduced points, the proof is completed.
\end{proof}

We now complete the algebraic description of minimal apolar sets to $F$ by describing their defining ideals.
{
\begin{proposition}\label{Prop:ShapeofIdealMonomialproductSumofSquares}
     Let $F=x^2(y_1^2+\cdots+y_n^2)$ with $n \geq 2.$ If $\mathbb{X}$  is a minimal apolar set of $F$, then there exists $\sigma \in O(n)$ such that the ideal of $I(\sigma(\mathbb{X}))$ is given by
    \[I(\sigma(\mathbb{X}))= (\{Y_iY_j\}_{1\leq i < j \leq n}, \, X^3+\sum_{i=1}^n L_i(Y_1^2-Y_i^2))\]
    for some linear form $L_i $ in $\mathbb{C}[X,Y_1, \ldots, Y_n]$ for $1 \leq i \leq n.$
\end{proposition}}
\begin{proof}
    By Proposition \ref{Prop:HFofX2SumofSquares}, we have $\HF(\mathbb{X}, 3) = 3n$.
     Keeping the same notation as in Proposition \ref{Prop:HFofX2SumofSquares}, 
     we have $\HF(\mathbb{X},2) = \binom{n}{2}$ and $\mathbb{Y}$ is an ideal of $n$ points in general position apolar to $G=y_1^2+ \cdots+y_n^2.$ 
     {By \cite[Proposition 3.1]{ForbiddenLocus} for any point $P\in \mathbb{X}$, we have $P \notin \mathcal{F}_G$. Thus, we obtain $P \notin V(\sum_{i=1}^nY_i^2)$ for any point $P\in \mathbb{X}$. Since the action of orthogonal group $O(n)$ on the variables $Y_1, \ldots, Y_n$ with $\sum_{i=1}^nY_i^2 \neq 0$ has only one orbit, there exists $\sigma \in O(n)$ such that $\mathbb{Y} = \{E_1, E_2, \ldots, E_n\}$ where $E_i$ is the standard $i^{th}$ coordinate point for $1 \leq i \leq n$}. Hence
     \[(Y_iY_j \mid 1 \leq i < j \leq n) \subseteq I(\sigma(\mathbb{X})) \cap \mathbb{C}[Y_1, \ldots, Y_n].\] Therefore 
     \[I(\sigma(\mathbb{X}))_2 =(Y_iY_j \mid 1 \leq i < j \leq n).\]
     Further, by Proposition  \ref{Prop:HFofX2SumofSquares} we have $\sigma(\mathbb{X}) \subset \mathcal{L}.$ Hence $I (\sigma(\mathbb{X})) \supset I(\mathcal{L}).$
     Note that $\HF(\mathcal{L}, 3)= \binom{n+3}{3}-(3n+1).$ 
     Hence, there exists exactly one degree 3 generator in $I(\sigma(\mathbb{X})).$ Since $I(\sigma(\mathbb{X}) )\subset F^{\perp}$, a degree 3 generator has to be of the form $X^3+\sum_{i=2}^{n}L_i(Y_1^2-Y_i^2)$ for some $L_i \subset T_1$, $1 \leq i \leq n$. We write 
     \[I(\sigma(\mathbb{X})) = \bigcap_{i =1}^n \, (Y_1, Y_2, \ldots,\hat{Y_i}, \ldots, Y_n, \, Q= X^3+\sum_{i=2}^{n}L_i'(Y_1^2-Y_i^2)\,)+J,\]
    where $J$ is generated in degree at least four in $I(\sigma(\mathbb{X})).$ 
    By Proposition \ref{Prop:structurex^2}, we have 3 points on each line $V(Y_1, \, \ldots,\hat{Y_i}, \ldots, Y_n)$ for all $1 \leq i \leq n.$ Consider 
    $\bar Q \in \frac{\mathbb{C}[X, Y_1, \ldots, Y_n]}{(Y_1, \ldots,\hat{Y_i}, \ldots, Y_n)}.$ 
    Note that $\bar Q = X^3 +(\alpha X+kY_i)Y_i^2$ with $(\alpha, k) \neq 0$, and $\bar Q$ does not have a multiple factor. Indeed, if $\bar Q$ has a multiple factor, then there exists a degree 2 form in $(\bar Q)+ \bar J$. Since  $(\bar Q)+ \bar J$ is an ideal of 3 points, this is a contradiction.
    Hence $\bar Q$ does not have a multiple factor. Therefore, $(\bar Q)$ is an ideal of $3$ points. Thus $V(\bar Q) =  V( (\bar Q )+ \bar J)$. Therefore $(\bar Q) \supseteq \bar J.$ Hence $I(\sigma(\mathbb{X}))$ has generators in degree at most 3.
\end{proof}

Theorem \ref{thm:A} is now proved by Propositions \ref{Prop:structurex^2}, \ref{Thm:forbiddenLocusofX^2sumofSquares}, \ref{Prop:HFofX2SumofSquares}, \ref{Prop:ShapeofIdealMonomialproductSumofSquares}.

\section{The form $(x_1^2+x_2^2)(y_1^2+\cdots+y_n^2)$}
We note that up to a change of coordinates the forms 
$(x_1^2+x_2^2)(y_1^2+\cdots+y_n^2)$ and $x_1x_2(y_1^2+\cdots+y_n^2)$ can be identified. Hence, throughout this section, we assume the form to be $F=
x_1x_2(y_1^2+\cdots+y_n^2)$ with $n\geq 2$. By  Proposition \ref{Prop:RankOfForms}, we obtain that $\rk F = 4n.$ We the analyse all minimal apolar sets of $F$, we obtain the forbidden locus of $F$, and we also describe their ideals and Hilbert functions.

We begin by studying the projection of any minimal apolar set from $V(Y_1,\ldots,Y_n)$. A summary of the below result can be visualized in Figure \ref{Picture:PointsOnPlane}.

 \begin{proposition}\label{prop:projfromline}
     Let $F=x_1x_2(y_1^2+\cdots+y_n^2)$ with $n \geq 2$ and let $\mathbb{X}$ be a minimal apolar set of points to $F$. Then $\rk F=4n$ and the following facts hold:
     \begin{enumerate}
         \item[i)]   If $\mathbb{Y}$ is the projection of $\mathbb{X}$ from $V(Y_1,\ldots,Y_n)$ on $V(X_1,X_2)$, then $\mathbb{Y}$ is a set of $n$ distinct points in linear general position, that is
        \[
        \begin{array}{r|c|c|c}
        i & 0 & 1 & 2 \\
        \hline
        \HF(\mathbb{Y},i) & 1 & n & n \\
             
        \end{array},
        \]
        and $\HF(\mathbb{Y},i)=n$ for $i\geq 3$.
        
        \item[ii)] Let $\mathbb{Y}=\{Q_1,\ldots,Q_n\}$ and let $\ell=V(Y_1,\ldots,Y_n)$. If we consider the plane $\Lambda_i=\langle \ell, Q_i \rangle$, where $\langle \ell, Q_i \rangle$ denotes the linear span of $\ell$ and $Q_i,$ then $\mathbb{X}\cap\Lambda_i$ is a set of four distinct points for $1\leq i\leq n$. 
       
     \end{enumerate}
 \end{proposition}
\begin{proof} Proposition \ref{Prop:RankOfForms} immediately yields that $\rk F=4n$ we use.
We now prove (i).
     By Lemma \ref{Lem:PerpF(x)G(y)} we have
    \begin{equation}\label{EQ:PerpF2}
         F^{\perp} = (X_1^2,X_2^2, \{Y_iY_j\}_{(1\leq i < j \leq n)}, \{Y_1^2-Y_i^2\}_{(2 \leq i \leq n)}).
    \end{equation}
   
    Let $\mathbb{X}$ be a minimal apolar set of $F.$   Since $O(n)$ fixes $F$ by Lemma \ref{Lem:NonZeroCoordinatesAfterRotation}, we can assume that for $P_i = [u_i:v_i:a_{i,1}: \ldots :a_{i,n}] \in \mathbb{X}$, $a_{i,1} \neq 0$.
    \noindent Hence $g = Y_1 \in (T/I({\mathbb{X}}))_1$ is a non-zerodivisor in $T/I({\mathbb{X}})$. Let $V = \langle Y_2, Y_3, \ldots, Y_n\rangle_{\mathbb{C}} \subset (T/I({\mathbb{X}}))_1$. Note that $gV \subset (F^{\perp}/I({\mathbb{X}}))$, and $\dim V = n-1$. By Lemma \ref{Lem:UpperboundHF}, we get $$\HF(\mathbb{X}, 2) \leq 4n - (n-1) = 3n +1.$$
    Thus
    \begin{equation}\label{EQ:IX2forX1X2}
        \dim  I({\mathbb{X}})_2\geq \binom{n+3}{2} - 3n-1.     
    \end{equation}
    
    \noindent Let $\mathbb{Y}$ be the projection of $\mathbb{X}$ on $V(X_1,X_2)$ from the line $V(Y_1,\ldots,Y_n)$. 
    Therefore by Lemma \ref{Lem:ProjectionOverSetOfVariablesSaturated} we have 
        \begin{equation}
            I(\mathbb{Y}) = (X_1,X_2)+(I({\mathbb{X}}) \cap \, \mathbb{C}[Y_1, \ldots, Y_n])
        \end{equation}
    Since $I(\mathbb{X}) \subset F^{\perp}$, $I(\mathbb{X})_1 = 0.$ Hence $I(\mathbb{Y})_1 = (X_1,X_2)_1$. Therefore 
    \begin{equation}
        \HF(\mathbb{Y}, 1)= n.
    \end{equation}
    We have $I(\mathbb{Y})_2 = (X_1, X_2)_2 + (I({\mathbb{X}}) \cap \, \mathbb{C}[Y_1, \ldots, Y_n])_2.$ 
    We now use Grassmann formula to estimate
    \[
    \dim I({\mathbb{X}})_2 \cap \, \mathbb{C}[Y_1, \ldots, Y_n].
    \]
    By Equation \eqref{EQ:PerpF2} we note that $\dim (F^{\perp})_2 =  \binom{n}{2}+(n-1) +2$ and $$\dim (F^{\perp})_2 \cap \, \mathbb{C}[Y_1, \ldots, Y_n] = \binom{n}{2}+(n-1).$$ Thus we get that
    
        \begin{equation} \label{Equation:dimI(Y)}
        \dim I({\mathbb{X}})_2 \cap \, \mathbb{C}[Y_1, \ldots, Y_n]\geq\dim I({\mathbb{X}})_2+\dim (F^{\perp})_2 \cap \, \mathbb{C}[Y_1, \ldots, Y_n]-\dim (F^\perp)_2=\binom{n}{2},
        \end{equation}
    and also
        \begin{equation}\label{EQ:HFat2}
            \HF(\mathbb{Y}, 2) \leq \binom{n+1}{2} - \binom{n}{2} = n.
        \end{equation}
     Since $I(\mathbb{Y})$ is an ideal of points and  $\HF(\mathbb{Y} ,1) = n $, {by \cite{GMR83} we have
    \[
    \Delta \HF(\mathbb{Y}) = (1, \HF(\mathbb{Y}, 1) -  \HF(\mathbb{Y}, 0), \ldots,  \HF(\mathbb{Y}, i) -  \HF(\mathbb{Y}, i-1), \ldots )
    \]
    is an O-sequence. Hence we obtain $\HF(\mathbb{Y},2)= n.$ Moreover, by Macaulay's criterion (see \cite[Theorem 4.2.10]{BH98}) for O-sequences, $\Delta \HF(\mathbb{Y}, 1) = 0$ implies $\Delta \HF(\mathbb{Y}, l) = 0$ for all $l \geq 1.$} Thus, we obtain   
    \[\HF(\mathbb{Y}, 2)= n.\]
    Hence the Hilbert function of $\mathbb{Y}$ is given by
            \[
            \begin{array}{r|c|c|c|c}
                i & 0 & 1 & 2 & \cdots \\
                \hline
                \HF(\mathbb{Y},i) & 1 & n & n& n \\
                     
            \end{array}
                \]
    Therefore, $\mathbb{Y}$ is a set of $n$ points in a general position.

\vskip 5mm

    We can now proceed with the  proof of (ii).  Let $\mathbb{Y} = \{Q_1,\ldots,Q_n\}$. 
    From part (i) we have 
            \begin{equation*}
                \mathbb{X} \subset \bigcup_{i=1}^n \langle \,\ell, Q_i\rangle,
            \end{equation*}
    where $\langle \,\ell, Q_i\rangle$ is the plane passing through $\ell$ and $Q_i.$        
    Let $P_1=[1:0:\cdots:0]=V(X_2,Y_1, \ldots,Y_n),$ $P_2=[0:1:0:\cdots:0]=V(X_1,Y_1, \ldots,Y_n)$, and 
     $\mathcal{H}_i= \langle P_1,P_2,Q_1,\ldots, \hat{Q_i} ,\ldots, Q_n \rangle$ for $1\leq i\leq n$. Since $\mathcal{H}_i$ is a hyperplane, we let $\mathcal{H}_i=V(L_i),$ where $L_i$ is a linear form not involving $X_1$ and $X_2.$
     We {\em claim} that $Q_i\notin \mathcal{H}_i$.
     \begin{proof}[{Proof of the claim}]
         We prove the claim by contradiction. If $Q_i\in \mathcal{H}_i$ then $\mathbb{X} \subseteq \mathcal{H}_i.$ This implies that $I(\mathcal{H}_i)\subseteq I(\mathbb{X})\subseteq F^{\perp}.$  This yields $L_i\in F^{\perp},$ a contradiction.
     \end{proof} Let $L_i=\sum_{j=1}^n\alpha_{ij}y_j$, and consider the points of $\mathbb{X}$ not on $V(L_i)$, that is the points $\mathbb{X}\cap\Lambda_i$. Computing we get 
            \begin{equation*}
                \begin{split}
                     F^{\perp} : I(\mathcal{H}_i)=F^{\perp} : (\alpha_{i1} y_1 +  \cdots + \alpha_{in} y_n) & = ((\alpha_{i1} Y_1 +  \cdots + \alpha_{in} Y_n) \circ F)^{\perp}\\
                        & =(2x_1x_2(\alpha_{i1} y_1 +  \cdots + \alpha_{in} y_n))^\perp
                \end{split}
            \end{equation*}
    where the first equality holds by \cite[Lemma 3.2]{CCCGW18}. After a change of coordinates $G= x_1x_2L_i$ is a monomial and \cite[Proposition 3.1]{CCG12} yields $\rk G = 4$. Note that
    \[
    I(\mathbb{X}\cap\Lambda_i)=I(\mathbb{X}) : I(\mathcal{H}_i)\subseteq F^{\perp} : I(\mathcal{H}_i)=G^\perp.
    \]
    Therefore, each of the $n$ planes $\Lambda_i = \langle\ell,Q_i\rangle$ contains at least $4$ points of $\mathbb{X}$. Note that $V(Y_1,\ldots,Y_n)$ is the only line in the intersection of any two planes and that, by Lemma \ref{Lemma:eComputability&ForbiddenLocus}, $V(Y_1,\ldots,Y_n) \subseteq \mathcal{F}_F.$ Hence we obtain at least $4n$ points in the union of the $n$ planes. Since $\mid\mathbb{X}\mid = 4n$,
    each plane $\Lambda_i$ contains exactly $4$ points. {The proof is now completed.}
    \end{proof}

\begin{remark}\label{HFofX1X2}
    In the proof of Proposition \ref{prop:projfromline}, we proved that Equation \eqref{EQ:HFat2} is an equality. Hence, equality holds in Equations \eqref{Equation:dimI(Y)} and \eqref{EQ:IX2forX1X2}. Thus we proved that $\HF(\mathbb{X}, 2) = 3n+1.$
\end{remark}

We now determine the ideals of minimal apolar sets to $F$. Note that the geometry of minimal apolar sets can still vary quite widely depending on the parameters $\alpha_i$ and $\beta_i$ as shown in Remark \ref{Remark: ProejctiontoW}.
{
\begin{proposition}\label{Prop:structureOfIdealX1X2}
    Let $F=x_1x_2(y_1^2+\cdots+y_n^2)$ with $n \geq 2$. If $\mathbb{X}$ is a minimal apolar set of $F$, then there exists $\sigma \in O(n)$ such that ideal $I(\sigma(\mathbb{X}))$ is given by
    \[
    I(\sigma(\mathbb{X})) = (\{Y_iY_j\}_{1 \leq i <j \leq n}, \,X_1^2+\sum_{i=2}^n{\alpha_i}(Y_1^2-Y_i^2), \,X_2^2+\sum_{i=2}^n{\beta_i}(Y_1^2-Y_i^2) )
    \]
    where $\alpha_i, \beta_i \neq 0$ and $\sum_{i=2}^n \alpha_i\neq 0,\, \sum_{i=2}^n \beta_i \neq 0.$
\end{proposition}}
\begin{proof}
    Let $\mathbb{Y}$ be the projection of $\mathbb{X}$ on $V(X_1,X_2)$ from the line $V(Y_1,\ldots,Y_n)$ By the proof of Theorem \ref{prop:projfromline} (ii) we have $\mathbb{Y} = \{Q_1, \ldots, Q_n\}$ is a set of $n$ points in general position. Moreover, by Lemma \ref{Lemma:ProjectionisApolar} we have $\mathbb{Y}$ is apolar to $G=(y_1^2+ \cdots +y_n^2).$ 
    {Thus by \cite[Proposition 3.1]{ForbiddenLocus} we have $Q_i \notin V(\sum_{i=j}^nY_j^2)$ for $1 \leq i \leq n.$ Since the action of $O(n)$ on $\mathbb{P}^{n-1}\setminus V(\sum_{j=1}^nY_j^2)$ has exactly one orbit, there exists $\sigma \in O(n)$ such that  $\sigma(\mathbb{Y}) = \{E_1, \ldots, E_n\}$ where $E_i$ is the standard $i^{th}$-coordinate point for $1 \leq i \leq n$.}
    Hence $I(\sigma(\mathbb{Y}) )\supseteq (Y_iY_j \mid 1 \leq i < j \leq n).$ Hence $(Y_iY_j \mid 1 \leq i < j \leq n) \subseteq I(\sigma(\mathbb{X}))_2.$
    By Remark \ref{HFofX1X2} we have $\dim I(\sigma(\mathbb{X}))_2 = \binom{n}{2}+2$. Hence there exist quadratic forms $\mathcal{Q}_1,\mathcal{Q}_2 \in I(\mathbb{X})_2$ such that
    \[I(\sigma(\mathbb{X}))_2 = (\{Y_iY_j\}_{1 \leq i<j \leq n}, \, \mathcal{Q}_1,\mathcal{Q}_2).\]
    Since $\mathcal{Q}_1,\mathcal{Q}_2 \in F^{\perp},$ and $\HF(\sigma(\mathbb{Y}), 2) =\binom{n}{2}$, $\mathcal{Q}_1,\mathcal{Q}_2$ has at least one of the coefficient of $X_1^2, X_2^2$ non-zero. 
    Thus
    \[\mathcal{Q}_1= X_1^2+\sum_{i=2}^n \alpha_i (Y_1^2-Y_i^2),\, \, \mathcal{Q}_2 = X_2^2+\sum_{i=2}^n \beta_i (Y_1^2-Y_i^2). \]
    And we {\em claim} that $\alpha_i\neq 0, \beta_i\neq 0$ for $2 \leq i \leq n$ and $ \sum_{i=2}^n \alpha_i, \sum_{i=2}^n \beta_i \neq 0.$ 
    \begin{proof}[{Proof of the claim}]
    By contradiction assume $\alpha_i=0$ for some $i$ and consider the plane defined by $\mathcal{P}_i = (Y_1, \ldots, \hat Y_i, \ldots, Y_n).$ Note that, by Proposition \ref{prop:projfromline} (ii), $\overline{I(\sigma(\mathbb{X}))} \subset T/\mathcal{P}_i$ is an ideal of 4 points. Moreover, if $\alpha_i =0$, then  $X_1 \in \overline{I(\sigma(\mathbb{X}))}$. Hence, the Hilbert function of $\HF(T/\overline{I(\sigma(\mathbb{X}))},2) \leq 2.$
    This is a contradiction. Similar arguments can be repeated to complete the proof.
    \end{proof}
    
    \noindent Hence, we write
    \[I(\sigma(\mathbb{X})) = \bigcap_{i =1}^n \, (Y_1, Y_2, \ldots,\hat{Y_i}, \ldots, Y_n, \mathcal{Q}_1,\mathcal{Q}_2\,)+J,\]
    where $J$ is generated in degree at least three in $I(\mathbb{X}).$
    
    Consider \[
        \bar {\mathcal{Q}_1}, \bar{\mathcal{Q}_2}\in\frac{\mathbb{C}[X_1,X_2,Y_1,\ldots,Y_n]}{(Y_1,\ldots,\hat Y_i,\ldots,Y_{n})}\simeq{\mathbb{C}[X_1,X_2,Y_{i}]},
        \]
    and note that $\bar{\mathcal{Q}_1} = X_1^2-k_1Y_i^2, \, \bar{\mathcal{Q}_2} = X_2^2-k_2Y_i^2$ with $k_1,k_2\neq 0$. In particular, $(\bar{\mathcal{Q}_1},\bar{\mathcal{Q_2}})$ is an ideal of four points. By Proposition \ref{prop:projfromline}, we know that $(\bar{\mathcal{Q}_1},\bar{\mathcal{Q_2}})+\bar J$ is the ideal of the same four points. Thus, $(\bar{\mathcal{Q}_1},\bar{\mathcal{Q_2}}) \supseteq \bar J$  and hence  
    $$(Y_1, Y_2, \ldots,\hat{Y_i}, \ldots, Y_n, \mathcal{Q}_1,\mathcal{Q}_2\,)\supseteq J$$
    for all $i$. The proof is now completed.
\end{proof}

{
We now improve part (ii) of Proposition \ref{prop:projfromline}.
\begin{proposition}
    Using the notation of Proposition \ref{prop:projfromline},  let $\mathbb{Y}=\{Q_1,\ldots,Q_n\}$ and let $\ell=V(Y_1,\ldots,Y_n)$. If we consider the plane $\Lambda_i=\langle \ell, Q_i \rangle$, where $\langle \ell, Q_i \rangle$ denotes the linear span of $\ell$ and $Q_i,$ then  for $1\leq i\leq n$,  a set of four distinct points in $\mathbb{X}\cap\Lambda_i$ is  contained in two lines passing through $Q_i$.
\end{proposition}
\begin{proof}
    We want to show that the four points in each of the planes $\Lambda_i$ lie on a pair of lines passing through the point $Q_i$. After a change a coordinates we may assume $Q_i$ to be the coordinate point $E_i$, and we may assume the ideal of the four points to be 
    $$(Y_1, Y_2, \ldots,\hat{Y_i}, \ldots, Y_n, \mathcal{Q}_1,\mathcal{Q}_2\,).$$
   Thus, the ideal of the four points two contains degree two forms $\mathcal{Q}_1, \mathcal{Q}_2$ such that
   \[
   \bar{\mathcal{Q}_1}, \bar{\mathcal{Q}_2} \in T/(Y_1, Y_2, \ldots,\hat{Y_i}, \ldots, Y_n)
   \]
   with $\bar{\mathcal{Q}_1} \neq 0, \bar{\mathcal{Q}_2} \neq 0$. This is enough to conclude the proof.
\end{proof}}

We now determine the Hilbert function of any minimal apolar set to $F$.

\begin{proposition}\label{Prop:HFfor2factors}
    If $F= x_1x_2(y_1^2+y_2^2+ \cdots +y_n^2)$, then the Hilbert function of any minimal apolar set $\mathbb{X}$ is 
    \[
    \begin{array}{r|c|c|c|c|c|c}
        i & 0 & 1& 2 & 3& 4 & \cdots \\
        \hline
        \HF(\mathbb{X}, i) & 1 & n+2 & 3n+1  & 4n & 4n &4n
    \end{array}
    \]
    
\end{proposition}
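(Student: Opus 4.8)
The plan is to reduce the whole statement to the single value $\HF(\mathbb{X},3)$, using the entries of the Hilbert function that are already known together with the explicit defining ideal from Proposition \ref{Prop:structureOfIdealX1X2}. Since $\mathbb{X}$ is a reduced set of $4n$ points, the function $i\mapsto\HF(\mathbb{X},i)$ is non-decreasing, is bounded above by $|\mathbb{X}|=4n$, and equals $4n$ for $i\gg 0$; in particular, once it attains the value $4n$ it stays constant. The entries in degrees $0,1,2$ are already in hand: $\HF(\mathbb{X},0)=1$ trivially, $\HF(\mathbb{X},1)=n+2$ because $(F^\perp)_1=0$ and $I(\mathbb{X})\subseteq F^\perp$, and $\HF(\mathbb{X},2)=3n+1$ by Remark \ref{HFofX1X2}. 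Thus the whole statement follows once I prove $\HF(\mathbb{X},3)=4n$.

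To compute the degree-three value I would work with the explicit ideal. As the Hilbert function is invariant under change of coordinates, Proposition \ref{Prop:structureOfIdealX1X2} lets me assume $I(\mathbb{X})=J+(\mathcal{Q}_1,\mathcal{Q}_2)$, where $J=(\{Y_iY_j\}_{1\le i<j\le n})$, $\mathcal{Q}_1=X_1^2+\sum_{i=2}^n\alpha_i(Y_1^2-Y_i^2)$ and $\mathcal{Q}_2=X_2^2+\sum_{i=2}^n\beta_i(Y_1^2-Y_i^2)$. First I would record $\HF(T/J,\cdot)$ directly from the monomial structure of $J$: a monomial survives in $T/J$ precisely when at most one of $Y_1,\ldots,Y_n$ divides it, so a basis of $(T/J)_d$ consists of the $d+1$ monomials in $X_1,X_2$ together with, for each $i$, the $\binom{d+1}{2}$ monomials $X_1^aX_2^bY_i^c$ with $c\ge 1$. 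This gives $\HF(T/J,d)=(d+1)+n\binom{d+1}{2}$, and in particular $\HF(T/J,3)=6n+4$. Writing $\bar{\mathcal{Q}}_k$ for the image of $\mathcal{Q}_k$ in $T/J$, one has $\HF(\mathbb{X},3)=\HF(T/J,3)-\dim(\bar{\mathcal{Q}}_1,\bar{\mathcal{Q}}_2)_3$, so it suffices to show that this latter dimension equals $2n+4$.

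The space $(\bar{\mathcal{Q}}_1,\bar{\mathcal{Q}}_2)_3$ is spanned by the $2(n+2)$ products $X_1\bar{\mathcal{Q}}_k,\,X_2\bar{\mathcal{Q}}_k$ and $Y_j\bar{\mathcal{Q}}_k$ for $1\le j\le n$ and $k=1,2$, and the key claim is that these products are linearly independent. This is the main point. Since reducing modulo $J$ annihilates every monomial $Y_iY_j$ with $i\ne j$, the products $X_1\bar{\mathcal{Q}}_1,X_2\bar{\mathcal{Q}}_1,X_1\bar{\mathcal{Q}}_2,X_2\bar{\mathcal{Q}}_2$ contain respectively the four distinct pure-$X$ cubics $X_1^3,\,X_1^2X_2,\,X_1X_2^2,\,X_2^3$, each occurring in exactly one of them; meanwhile $Y_j\bar{\mathcal{Q}}_1$ reduces to $X_1^2Y_j$ plus a multiple of a cube $Y_?^3$, and $Y_j\bar{\mathcal{Q}}_2$ to $X_2^2Y_j$ plus such a term. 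The monomials $X_1^2Y_j$ and $X_2^2Y_j$ for $1\le j\le n$ are pairwise distinct, distinct from the four pure-$X$ cubics, and each appears in only the single product that produced it (the $X$-parts of $\mathcal{Q}_1,\mathcal{Q}_2$ being exactly $X_1^2$ and $X_2^2$ is what makes this bookkeeping clean). Reading off the coefficients of these $2n+4$ distinguished monomials in any putative linear relation forces all coefficients to vanish, so $\dim(\bar{\mathcal{Q}}_1,\bar{\mathcal{Q}}_2)_3=2n+4$.

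Combining the two computations yields $\HF(\mathbb{X},3)=(6n+4)-(2n+4)=4n=|\mathbb{X}|$, and the stabilization property of the Hilbert function of reduced points noted in the first step then gives $\HF(\mathbb{X},i)=4n$ for every $i\ge 3$, completing the proof. The only delicate step is the linear independence of the $2n+4$ products in $T/J$ from the third paragraph; everything else is either already established or a routine monomial count, so I expect that independence verification to be where the real content lies, and I would present it by explicitly exhibiting the distinguished monomial attached to each product.
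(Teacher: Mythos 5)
Your proof is correct, but it follows a genuinely different route from the paper's. Both arguments start from the explicit ideal of Proposition \ref{Prop:structureOfIdealX1X2}, but the paper then proves that the generators $\{Y_iY_j\}_{1\le i<j\le n},\mathcal{Q}_1,\mathcal{Q}_2$ form a Gr\"obner basis for the lexicographic order $X_1>X_2>Y_1>\cdots>Y_n$ (by checking that all S-polynomials reduce to zero modulo the generators), so that $\operatorname{in}_>(I(\mathbb{X}))=(\{Y_iY_j\}_{1\le i<j\le n},X_1^2,X_2^2)$ and the entire Hilbert function, in every degree simultaneously, is read off from this monomial ideal with no appeal to previously computed values. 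You instead reduce everything to the single value $\HF(\mathbb{X},3)$, which you compute via the monomial count $\dim(T/J)_3=6n+4$, where $J=(\{Y_iY_j\}_{1\le i<j\le n})$, together with the linear independence of the $2n+4$ products $X_1\bar{\mathcal{Q}}_k$, $X_2\bar{\mathcal{Q}}_k$, $Y_j\bar{\mathcal{Q}}_k$; your independence argument is sound, since each of the distinguished monomials $X_1^3$, $X_1^2X_2$, $X_1X_2^2$, $X_2^3$, $X_1^2Y_j$, $X_2^2Y_j$ occurs with coefficient $1$ in exactly one product and in no other (the cross terms like $X_1Y_1^2$ do appear in several products, but you never need them). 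The remaining degrees then follow from $(F^\perp)_1=0$, Remark \ref{HFofX1X2}, and the standard monotonicity and stabilization of Hilbert functions of reduced point sets. What the Gr\"obner route buys is uniformity: one computation handles all degrees at once, independent of earlier results. What your route buys is the avoidance of Gr\"obner machinery altogether, isolating degree $3$ as the only place where real work is needed; in fact, your monomial count also recovers $\HF(\mathbb{X},2)=\dim(T/J)_2-2=3n+1$ directly, since $\bar{\mathcal{Q}}_1,\bar{\mathcal{Q}}_2$ are visibly independent in $(T/J)_2$, so even the citation of Remark \ref{HFofX1X2} could be dispensed with.
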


\begin{proof} 
    By Proposition \ref{Prop:structureOfIdealX1X2}, after a change of coordinates, we have $I(\mathbb{X}) = (\{Y_iY_j\}_{1 \leq i <j \leq n}, \,Q_1, \,Q_2)$ where $Q_1 = X_1^2+\sum_{i=2}^n{\alpha_i}(Y_1^2-Y_i^2),$ and $Q_2 =X_2^2+\sum_{i=2}^n{\beta_i}(Y_1^2-Y_i^2)$ . We consider the lexicographic ordering $X_1>X_2>Y_1 > \cdots>Y_n$ on $\mathbb{C}[X_1, X_2,Y_1, \ldots, Y_n]$ and we compute a Gr\"obner basis $G$ of $I(\mathbb{X})$. In particular,  we will show that $G =(\{Y_iY_j\}_{1\leq i<j\leq n}, \, Q_1,\, Q_2)$ is such a basis and this is enough to prove the result.

    To prove that $G$ is Gr\"obner basis we prove that each S-polynomials is zero mod $G$.
     Let $f_{i,j}  = Y_iY_j$ and note that the S-polynomial of two monomials is automatically zero. Thus, we only need to consider the following two cases.
     \begin{equation*}
         \begin{split}
             S(f_{i,j}, Q_1) & = \frac{Y_iY_jX_1^2}{Y_iY_j}(Y_iY_j) -  \frac{Y_iY_jX_1^2}{X_1^2}Q_1 \\
             & =  Y_iY_j\sum_{k=2}^n \alpha_k (Y_1^2-Y_k^2) \\
             & = 0 \mod G.
         \end{split}
     \end{equation*}
     Similarly, $S(f_{i,j}, Q_2) = 0 \mod G.$
     Moreover, 
     \begin{equation*}
         \begin{split}
             S(Q_1,Q_2) & = \frac{X_1^2X_2^2}{X_1^2}Q_1 -\frac{X_1^2X_2^2}{X_2^2}Q_2 \\
             & = X_2^2 (\sum_{i=2}^n \alpha_i (Y_1^2-Y_i^2)) -X_1^2 (\sum_{i=2}^n \beta_i (Y_1^2-Y_i^2)) \\
             & = X_2^2 (\sum_{i=2}^n \alpha_i (Y_1^2-Y_i^2)) -X_1^2 (\sum_{i=2}^n \beta_i (Y_1^2-Y_i^2)) +X_1^2X_2^2 -X_1^2X_2^2 \\
             & = X_2^2 Q_1 -X_1^2Q_2 \\
             & = 0 \mod G.
         \end{split}
     \end{equation*}
     
     Hence $G$ is a Gr\"obner basis of $I(\mathbb{X}).$ 
     Therefore the initial ideal of $I(\mathbb{X})$ is given by
    \[\text{in}_>(I(\mathbb{X})) = (\{Y_iY_j\}_{1\leq i<j\leq n}, \, X_1^2,X_2^2).\]
    {Since $T/I(\mathbb{X})$ and $T/\text{in}_>(I(\mathbb{X}))$ have the same Hilbert function (see \cite[Chapter 9, Proposition 9]{CLO97}), the proof is now completed.}
\end{proof}

We can now characterize which points appear in any minimal apolar sets to $F$.

\begin{theorem}\label{Thm:forbidden2factors}
     If $F= x_1x_2(y_1^2+y_2^2+ \cdots +y_n^2)$, then the forbidden locus of $F$ \[\mathcal{F}_F = V(X_1X_2) \cup V(Y_1^2 + \cdots Y_n^2).\]
\end{theorem}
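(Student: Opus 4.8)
The plan is to prove the two inclusions separately, exploiting the symmetry group of $F$ throughout. Recall that $F=x_1x_2(y_1^2+\cdots+y_n^2)$ is fixed both by the orthogonal group $O(n)$ acting on $Y_1,\ldots,Y_n$ and by the stabiliser of the binary quadric $x_1x_2$ acting on $X_1,X_2$ (the split torus $\mathrm{diag}(t,t^{-1})$ together with the swap $X_1\leftrightarrow X_2$); crucially, both loci $V(X_1X_2)$ and $V(Y_1^2+\cdots+Y_n^2)$ are invariant under these actions. Since $\mathcal{W}_F$ is invariant under any linear automorphism preserving $F$, the whole argument can be organized around these two actions, exactly in the spirit of the proof of Theorem \ref{Thm:forbiddenLocusofX^2sumofSquares}.

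For the inclusion $\mathcal{F}_F\supseteq V(X_1X_2)\cup V(Y_1^2+\cdots+Y_n^2)$ I would show that no minimal apolar set meets either component. Let $\mathbb{X}$ be a minimal apolar set. By Proposition \ref{Prop:structureOfIdealX1X2}, after a coordinate change given by some $O\in O(n)$ (which fixes $F$ and preserves both loci), we may assume $I(\mathbb{X})=(\{Y_iY_j\}_{i<j},\mathcal{Q}_1,\mathcal{Q}_2)$ with $\mathcal{Q}_1=X_1^2+\sum_{i=2}^n\alpha_i(Y_1^2-Y_i^2)$, $\mathcal{Q}_2=X_2^2+\sum_{i=2}^n\beta_i(Y_1^2-Y_i^2)$, all $\alpha_i,\beta_i\neq 0$ and $\sum\alpha_i,\sum\beta_i\neq 0$. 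A point $P=[u:v:b_1:\cdots:b_n]\in\mathbb{X}$ satisfies $b_ib_j=0$ for $i\neq j$, so at most one coordinate $b_{i_0}$ is nonzero; they cannot all vanish, since then $\mathcal{Q}_1(P)=u^2$ and $\mathcal{Q}_2(P)=v^2$ would both be zero. Evaluating $\mathcal{Q}_1,\mathcal{Q}_2$ at $P$ (using $\sum\alpha_i,\sum\beta_i\neq 0$ when $i_0=1$, and $\alpha_{i_0},\beta_{i_0}\neq 0$ otherwise) forces $u\neq 0$, $v\neq 0$, and $\sum b_i^2=b_{i_0}^2\neq 0$. Hence $P\notin V(X_1X_2)\cup V(Y_1^2+\cdots+Y_n^2)$. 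Undoing the orthogonal change of coordinates, which preserves both loci, the same holds for the original $\mathbb{X}$, giving the inclusion.

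For the reverse inclusion I would produce, for any point off the two loci, a minimal apolar set through it. Fix $Q=[u:v:a_1:\cdots:a_n]$ with $uv\neq 0$ and $\sum_{i=1}^n a_i^2\neq 0$, and let $s$ be a square root of $\sum a_i^2$. Choose $\alpha_2=\beta_2=uv/s^2\neq 0$ and pick the remaining $\alpha_i,\beta_i$ nonzero with $\sum\alpha_i\neq 0$, $\sum\beta_i\neq 0$; then $(\{Y_iY_j\}_{i<j},\mathcal{Q}_1,\mathcal{Q}_2)$ defines a reduced set $\mathbb{X}_0$ of $4n$ points (as in the proof of Proposition \ref{Prop:HFfor2factors} together with the explicit four points on each plane $\Lambda_i$), which is apolar to $F$ because its generators lie in $F^\perp$; as $\rk F=4n$, it is minimal. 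The standard point $P_0=[\sqrt{\alpha_2}:\sqrt{\beta_2}:0:1:0:\cdots:0]$ lies on $\mathbb{X}_0$. Since $(a_1,\ldots,a_n)/s$ is a unit vector for the form $\sum Y_i^2$, it can be completed to a complex orthogonal matrix $A\in O(n)$ with $Ae_2=(a_1,\ldots,a_n)/s$; and taking $h=\mathrm{diag}(t,t^{-1})$ with $t=u/(s\sqrt{\alpha_2})$ gives $h(\sqrt{\alpha_2},\sqrt{\beta_2})=(u,v)/s$, by the choice $\alpha_2\beta_2=(uv/s^2)^2$. The block automorphism $\mathrm{diag}(h,A)$ fixes $F$ and sends $P_0$ to $Q$ projectively, so its image of $\mathbb{X}_0$ is a minimal apolar set containing $Q$; thus $Q\in\mathcal{W}_F$, and the reverse inclusion follows.

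I expect the reverse inclusion to be the main obstacle: one must exhibit a \emph{single} symmetry of $F$ that simultaneously places the $X$-ratio $[u:v]$ and the $Y$-direction $[a_1:\cdots:a_n]$ with one consistent projective scale. The device that makes this work is the freedom in the parameters $\alpha_2,\beta_2$ of the explicit apolar set, which absorbs the scaling mismatch between the two blocks, after which the orthogonal completion handles the $Y$-part. By contrast, the forbidden inclusion is essentially immediate once the defining ideal of Proposition \ref{Prop:structureOfIdealX1X2} is in hand, since the conditions $\alpha_i,\beta_i\neq 0$ and $\sum\alpha_i,\sum\beta_i\neq 0$ directly forbid $X_1=0$, $X_2=0$, and $\sum Y_i^2=0$ on $\mathbb{X}$.
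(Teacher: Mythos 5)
Your proof is correct, and although it follows the paper's overall architecture (the structure theorem of Proposition \ref{Prop:structureOfIdealX1X2} plus symmetries of $F$), both halves are executed differently. For the inclusion $\mathcal{F}_F\supseteq V(X_1X_2)\cup V(Y_1^2+\cdots+Y_n^2)$, the paper reads only the component $V(X_1X_2)$ off the normalized ideal; for the quadric it instead projects $\mathbb{X}$ from $V(Y_1,\ldots,Y_n)$, uses Lemma \ref{Lemma:ProjectionisApolar} to see that the projection is a minimal apolar set of $y_1^2+\cdots+y_n^2$, and quotes the forbidden locus of that form from \cite{ForbiddenLocus}. Your single evaluation argument (at most one $Y$-coordinate of any point of $\mathbb{X}$ is nonzero, and then $\mathcal{Q}_1,\mathcal{Q}_2$ force $u\neq 0$, $v\neq 0$, $\sum b_i^2\neq 0$) disposes of both components at once and is more self-contained. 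For the reverse inclusion both arguments transport a marked point of an explicit apolar set to $Q$ by a symmetry of $F$; the paper fixes one apolar set and implicitly needs $Q$ normalized so that $\sum c_i^2=1$ in order for its matrix $\mathrm{diag}(a,b,N)$ (built via Lemma \ref{Lem:OrthonalMatrixUsingSumOfsquare} from a minimal apolar set of $G$ through the projection of $Q$) to land on $Q$ rather than on $[as:bs:c_1:\cdots:c_n]$, whereas you keep $Q$ as given and absorb the scale into the parameters $\alpha_2=\beta_2=uv/s^2$ and the torus element $h$; your version makes explicit a scaling that the paper leaves tacit. Two minor points to tidy: with the paper's convention that points transform by $g^T$ when $g$ acts on forms, the unit vector $(a_1,\ldots,a_n)/s$ should be completed to a row of $A$ rather than a column (equivalently, use $A^T$ in the block matrix), exactly the same bookkeeping the paper glosses over; and reducedness and distinctness of your $\mathbb{X}_0$ should be cited from the primary decomposition \eqref{Eq:primaryDec} in the proof of Proposition \ref{Prop:prpjfromline}, since the Gr\"obner basis computation of Proposition \ref{Prop:HFfor2factors} only yields the Hilbert function, not reducedness.
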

\begin{proof}
     By Proposition \ref{Prop:structureOfIdealX1X2} we have $I(\mathbb{X}) = (\{Y_iY_j\}_{1 \leq i <j \leq n}, \,X_1^2+\sum_{i=2}^n{\alpha_i}(Y_1^2-Y_i^2), \,X_2^2+\sum_{i=2}^n{\beta_i}(Y_1^2-Y_i^2) )$ up to a change of coordinates.

    Let $P\in\mathcal{W}_F$ and assume that $P\in V(X_1X_2)$. We will prove that this a contradiction.
    If $X_1(P) = 0$ or $X_2(P)=0$, then, by the shape of the ideal, we get that $P=[0:0: \cdots :0]$. Thus, $V(X_1X_2) \subseteq \mathcal{F}_F.$

   Now, let  $Q = [a:b:c_1: \cdots :c_n] \in \mathcal{W}_F$ and  assume by contradiction that $Q\in V\left(\sum_{i=1}^n Y_i^2\right)$. 
   Let $\mathbb{X}$ be a minimal apolar set containing $Q$. Following Proposition \ref{prop:projfromline} we consider $\mathbb{Y}$ the projection of $\mathbb{X}$ from $V(Y_1, \ldots, Y_n)$ to $V(X_1,X_2)$. Since $\mid \mathbb{Y}\mid =n$, we obtain $\mathbb{Y}$ is a minimal apolar set of $G = y_1^2+\cdots +y_n^2$ by Lemma \ref{Lemma:ProjectionisApolar}. Note that, by Lemma \ref{Lemma:eComputability&ForbiddenLocus}, the image of $Q$ under the projection exists and it is $[0:0:c_1: \cdots :c_n] \in \mathcal{W}_G$. This is a contradiction by Proposition \cite[Proposition 3.1]{ForbiddenLocus} since $\mathcal{F}_G=V(\sum_{i=1}^n Y_i^2)$. Hence, $V(Y_1^2+ \cdots +Y_n^2) \subseteq \mathcal{F}_F.$

Thus we proved that $\mathcal{F}_F\supseteq V(X_1X_2) \cup V(Y_1^2 + \cdots Y_n^2)$. Now we prove that equality holds.
{
Let $Q=[a:b:c_1:c_2: \cdots :c_n]$ such that $ab \neq 0$ and  $\sum_{i=1}^n c_i^2 \neq 0$. Consider the ideal 
\[I(\mathbb{X}) = (\{Y_iY_j\}_{1 \leq i < j \leq n}, X_1^2-Y_1^2- \cdots -Y_{n-1}^2+(n-1)Y_n^2, X_1^2-X_2^2) \subseteq F^{\perp}.\]
Since the orthogonal group action on the variables $Y_1, \ldots, Y_n$ with $\sum_{i=1}^nY_i^2 \neq 0$ has only one orbit, there exists $\sigma \in O(n)$ such that $\sigma(\mathbb{X}) = \mathbb{X}'$ with $[1:1: c_1:c_2: \cdots :c_n] \in \mathbb{X}'.$ Moreover, scaling the first two coordinates fixes $F$ up to multiplication of scalars. Thus there exists $\mathbb{X}''$ apolar to $F$ containing $Q.$
Since $F$ is fixed by $O(n)$ and by scaling of the first two variables, the Waring locus and the forbidden locus are stable under these actions. Hence  the result follows.}
\end{proof}

We now consider the projections of minimal apolar sets to $F$ on the line $V(X_1,X_2)$.

\begin{proposition}\label{Prop:prpjfromline}
     If $\mathbb{W}$ is the projection of $\mathbb{X}$ from $V(X_1,X_2)$ on $V(Y_1,\ldots,Y_n)$, then $\mathbb{W}$ is a set of $2k$ points with $k\leq n$.
\end{proposition}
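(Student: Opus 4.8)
The plan is to read everything off the explicit defining ideal. By Proposition \ref{Prop:structureOfIdealX1X2}, after a change of coordinates we may assume
\[
I(\mathbb{X}) = (\{Y_iY_j\}_{1\le i<j\le n},\ X_1^2+\sum_{i=2}^n\alpha_i(Y_1^2-Y_i^2),\ X_2^2+\sum_{i=2}^n\beta_i(Y_1^2-Y_i^2)),
\]
with $\alpha_i,\beta_i\neq0$ and $\sum_{i=2}^n\alpha_i,\ \sum_{i=2}^n\beta_i\neq0$. Here the center of projection is the $(n-1)$-plane $V(X_1,X_2)$ and the target is the line $\ell=V(Y_1,\dots,Y_n)$, carrying the coordinates $[X_1:X_2]$. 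One could first invoke Lemma \ref{Lemma:ProjectionisApolar} to note that $\mathbb{W}$ is apolar to $x_1x_2$, i.e. $I(\mathbb{W})\subseteq(X_1^2,X_2^2)$; however this is vacuous as soon as $|\mathbb{W}|\ge 3$, so the parity in the statement must instead be extracted from the symmetry of $\mathbb{X}$ itself.

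I would first pin down the $4n$ points plane by plane. Restricting $I(\mathbb{X})$ to $\Lambda_i=V(Y_j:j\neq i)$ cuts out $\mathbb{X}\cap\Lambda_i$ in the coordinates $X_1,X_2,Y_i$: for $i\ge2$ one gets $(X_1^2-\alpha_iY_i^2,\ X_2^2-\beta_iY_i^2)$, whose four points are $[\pm\sqrt{\alpha_i}:\pm\sqrt{\beta_i}:1]$, while for $i=1$ one gets the same shape with $-\sum_{k\ge2}\alpha_k$ and $-\sum_{k\ge2}\beta_k$ in place of $\alpha_i,\beta_i$. The projection from $V(X_1,X_2)$ merely records $[X_1:X_2]$, so the four points of each $\Lambda_i$ collapse to the two points $A_i=[\sqrt{\alpha_i}:\sqrt{\beta_i}]$ and $B_i=[\sqrt{\alpha_i}:-\sqrt{\beta_i}]$ of $\ell$, which are distinct because $\beta_i\neq0$. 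Geometrically this is the collapse of the two lines through $Q_i$ from Proposition \ref{prop:projfromline}(ii), since the center meets $\Lambda_i$ exactly in $Q_i$. Hence $\mathbb{W}=\bigcup_{i=1}^n\{A_i,B_i\}$ has at most $2n$ points.

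The heart of the matter is the parity, which I would obtain from the involution $\sigma\colon[X_1:X_2]\mapsto[X_1:-X_2]$ of $\ell$. By construction $\sigma(A_i)=B_i$, so $\mathbb{W}$ is $\sigma$-stable (this reflects the visible invariance of $I(\mathbb{X})$ above under $X_2\mapsto -X_2$). The two fixed points $[1:0],[0:1]$ of $\sigma$ lie in $V(X_1X_2)$; since $\mathbb{X}$ avoids the forbidden locus, which contains $V(X_1X_2)$ by Theorem \ref{Thm:forbidden2factors}, and the projection preserves the $X$-coordinates, no point of $\mathbb{W}$ is $\sigma$-fixed. Equivalently, the hypotheses $\alpha_i,\beta_i\neq0$ and $\sum_{i=2}^n\alpha_i,\ \sum_{i=2}^n\beta_i\neq0$ force each $A_i,B_i$ off the fixed locus. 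Thus $\sigma$ acts freely on $\mathbb{W}$, which is therefore a disjoint union of size-two orbits; consequently $|\mathbb{W}|$ is even, say $|\mathbb{W}|=2k$, and the bound $|\mathbb{W}|\le 2n$ gives $k\le n$.

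The only genuinely nontrivial point is the evenness: the inequality $k\le n$ is just the elementary two-lines-to-two-points collapse within each plane, whereas the factor of two in $2k$ encodes the $\sigma$-symmetry of $\mathbb{X}$ together with the guarantee, supplied by the forbidden-locus computation, that no projected point can land on a fixed point of $\sigma$. I would double-check the plane $\Lambda_1$ separately, since its restricted ideal involves $\sum_{k\ge2}\alpha_k$ and $\sum_{k\ge2}\beta_k$ rather than a single pair $\alpha_i,\beta_i$; this is precisely where the hypotheses $\sum_{i=2}^n\alpha_i\neq0$ and $\sum_{i=2}^n\beta_i\neq0$ are used to keep $A_1,B_1$ off the fixed locus.
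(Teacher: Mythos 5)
Your argument is correct and is essentially the paper's own proof: both read $\mathbb{W}$ off the primary decomposition coming from Proposition \ref{Prop:structureOfIdealX1X2}, so that the four points of $\mathbb{X}$ in each plane $\Lambda_i$ collapse to the two roots of a binary quadric of the form $\beta_i X_1^2-\alpha_i X_2^2$ on the target line, giving at most $2n$ points. Your free involution $\sigma\colon[X_1:X_2]\mapsto[X_1:-X_2]$ is just a repackaging of the paper's parity step --- that any two such root-pairs either coincide or are disjoint --- and the appeal to Theorem \ref{Thm:forbidden2factors} is not even needed, since the nonvanishing of $\alpha_i,\beta_i$ and of the sums $\sum_{i\geq 2}\alpha_i$, $\sum_{i\geq 2}\beta_i$ already keeps every projected point off the fixed points of $\sigma$.
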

\begin{proof}
    By Lemma \ref{Lem:ProjectionOverSetOfVariablesSaturated}, we have $I(\mathbb{W}) = (Y_1, \ldots, Y_n) + I(\mathbb{X}) \cap \mathbb{C}[X_1, X_2].$ To prove that the maximal number of points is $2n$, it is enough to prove that $ (I(\mathbb{X})_{2n} \cap \mathbb{C}[X_1, X_2] )\neq 0.$
    By Proposition \ref{Prop:structureOfIdealX1X2} we have 
        \begin{equation}\label{Eq:primaryDec}
            \begin{split}
                I(\mathbb{X})   & = (\{Y_iY_j\}_{1 \leq i <j \leq n}, X_1^2+\sum_{i=2}^n\alpha_{1i}(Y_1^2-Y_n^2), X_2^2+\sum_{i=2}^n\alpha_{2i}(Y_1^2-Y_i^2)) \\
                                & = \bigcap_{i=1}^n (Y_1, \ldots, \hat{Y_i}, \ldots, Y_n, X_1^2-k_{1i}Y_i^2, X_2^2-k_{2i}Y_i^2)
            \end{split}
        \end{equation}
    where $k_{ji} = \alpha_{ji}$ for $1 \leq j \leq 2$ and $2 \leq i \leq n$, and $k_{j1} = \sum_{i=1}^n \alpha_{ji}$ for $1 \leq j \leq 2.$ Hence $\prod_{i=1}^n (k_{2i}X_1^2-k_{1i}X_2^2) \in I(\mathbb{X}).$ Thus the first part of the proof is completed.

    Now we prove that only an even number of points can occur in $\mathbb{W}$. Note that formula \ref{Eq:primaryDec} gives a primary decomposition of $I(\mathbb{X})$. In particular, to understand the projection $\mathbb{W}$, we need to study the common roots of the polynomials $fk_{2i} = X_1^2 -k_{1i}X_2^2$ for $1\leq i \leq n$. Any pair of these polynomials either has two common roots or no common root at all. Hence, the result follows.
\end{proof}
The previous Proposition \ref{Prop:prpjfromline} shows that the projection $\mathbb{W}$, unlike the projection $\mathbb{Y}$, is not completely determined. In the remark that follows we give more details on $\mathbb{W}$.
\begin{remark}\label{Remark: ProejctiontoW} Keeping the same notation as in Proposition \ref{prop:projfromline}, we note the following:
      \begin{enumerate}
          \item[(i)] If we choose $\alpha_{1i} = \alpha_{2i}$ for all $2 \leq i \leq n,$ then $X_1^2-X_2^2 \in I(\mathbb{X}).$ Thus, $\mathbb{W}$ contains exactly 2 points.
          \item[(ii)] If $\frac{\alpha_{1i}}{\alpha_{2i}} \neq \frac{\alpha_{1j}}{\alpha_{2j}}$ for any $2 \leq i, j \leq n,\,i \neq j$ such that $\frac{\sum_{i=2}^n \alpha_{1k}}{\sum_{k=2}^n\alpha_{2k}} \neq \frac{\alpha_{1i}}{\alpha_{2i}}$ for any $2 \leq i \leq n$, we obtain exactly $2n$ points in $\mathbb{W}.$ For example, choose $\alpha_{1i} = i-1$ for $2 \leq i \leq n-1$, $\alpha_{1n}=e^n$ and $\alpha_{2i}=1$ for all $2 \leq i \leq n.$ 
          \item[(iii)] For $n=3$ we can not get exactly $4$ points in $\mathbb{W}$. Indeed, note that if $\frac{\alpha_2}{\beta_2} \neq \frac{\alpha_3}{\beta_3}$ implies 
              \[
              \frac{\alpha_2 +\alpha_3}{\beta_2 + \beta_3} \neq \frac{\alpha_2}{\beta_2} \quad \textrm{and} \quad \frac{\alpha_2 +\alpha_3}{\beta_2 + \beta_3} \neq \frac{\alpha_3}{\beta_3}
              \]
              This implies $\mathbb{W}$ has exactly 6 points. Further, if  $\frac{\alpha_2}{\beta_2} = \frac{\alpha_3}{\beta_3}$, then we obtain exactly $2$ points in $\mathbb{W}.$ 
          \item[(iv)] For $n \geq 4,$ we can choose $\frac{\alpha_i}{\beta_i}$ such that $\mathbb{W}$ has $4$ points. i.e. choose     $\frac{\alpha_2}{\beta_2} \neq \frac{\alpha_3}{\beta_3} $, and $\frac{\alpha_3}{\beta_3} = \frac{\alpha_i}{\beta_i}$ for all $i \geq 4$ with $\sum_{i=2}^{n-1} \alpha_i = \sum_{i=2}^{n-1} \beta_i = 0.$
    \end{enumerate}   
        
\end{remark}
  
Theorem \ref{thm:B} is now proved by using Propositions \ref{prop:projfromline},\, \ref{Prop:structureOfIdealX1X2},\, \ref{Prop:HFfor2factors},\, \ref{Thm:forbidden2factors}, and \ref{Prop:prpjfromline}.

\begin{example}
    Let $F=(x_1^2+x_2^2)(y_1^2+y_2^2+y_3^2).$ 
    By Remark \ref{Remark: ProejctiontoW} we have that the projection of any minimal apolar set $\mathbb{X}$ from $V(X_1,X_2)$ to $V(Y_1,Y_2,Y_3)$ is either a set of 2 points or a set of 6 points, while the projection of $\mathbb{X}$ from $V(Y_1,Y_2,Y_3)$ to $V(X_1,X_2)$ is always a set of 3 points.
    Hence, by Theorem \ref{thm:B} we obtain that any minimal apolar set $\mathbb{X}$ to $F$ has one of the following shapes described in Figure \ref{fig:A} and Figure \ref{fig:B}. Red dots denote the points in $\mathbb{X}$ in both the pictures.

\begin{figure}[H]
\centering

\begin{minipage}{0.47\textwidth}
\centering
\begingroup
\small
\def\svgwidth{\linewidth}
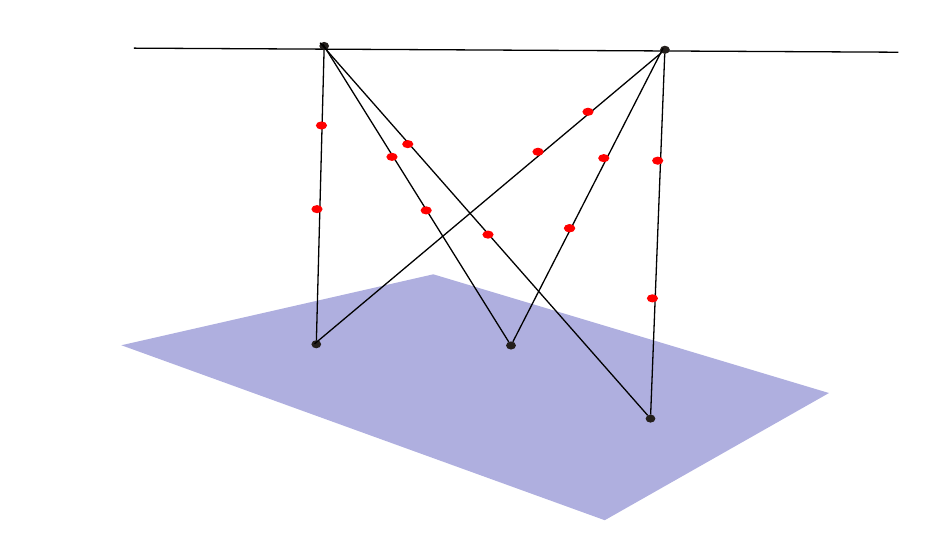
\endgroup
    \caption{Minimal apolar set whose projection from $V(X_1,X_2)$ to $V(Y_1,Y_2,Y_3)$ having 2 points.}
\label{fig:A}
\end{minipage}
\hfill
\begin{minipage}{0.5\textwidth}
\centering
\begingroup
\small
\def\svgwidth{\linewidth}
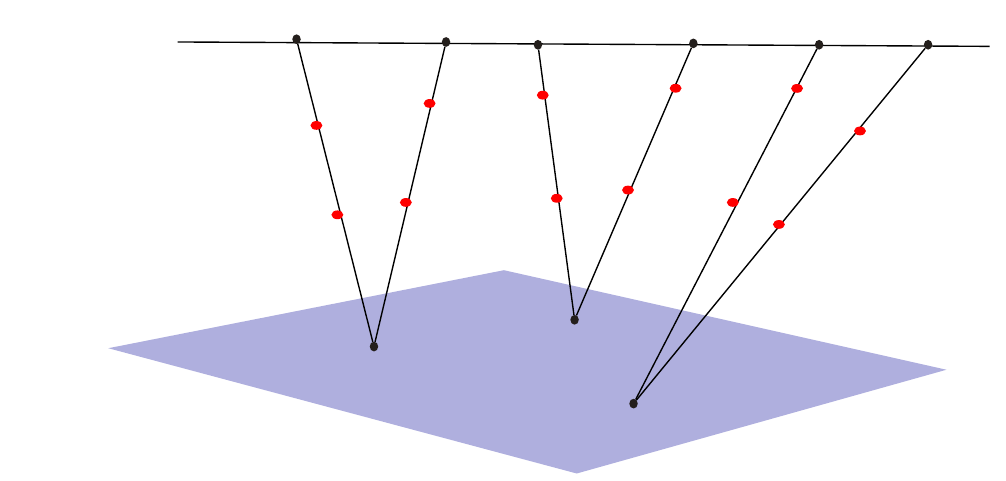
\endgroup
\caption{Minimal apolar set whose projection from $V(X_1,X_2)$ to $V(Y_1,Y_2,Y_3)$ having 6 points.}
\label{fig:B}
\end{minipage}

\end{figure}
\end{example}

\section{The forms  $(x_1^2+x_2^2+ \cdots+x_m^2)(y_1^2+y_2^2+ \cdots+y_n^2)$}
 
In this section, we provide a new lower and a upper bound for the rank of the forms $(x_1^2+x_2^2+ \cdots+x_m^2)(y_1^2+y_2^2+ \cdots+y_n^2)$. 
\begin{proposition}\label{Prop:RKm=3}
    Let $n \geq 2$. If we let $F=(x_1^2+x_2^2+x_3^2)(y_1^2+ \cdots+y_n^2)$, then $\rk F \leq 6n.$
\end{proposition}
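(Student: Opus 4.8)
The plan is to prove the bound directly through the Apolarity Lemma by exhibiting an explicit Waring decomposition of $F$ with $6n$ summands. The guiding principle is that, writing $q=x_1^2+x_2^2+x_3^2$, we have $F=q\,(y_1^2+\cdots+y_n^2)=\sum_{i=1}^n q\,y_i^2$, so I would build the decomposition fibre by fibre: to each square $y_i^2$ I attach a small group of fourth powers of linear forms involving only $x_1,x_2,x_3,y_i$, and then arrange that the unavoidable error terms cancel after summing over $i$.

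First I would record the elementary identity $(a+b)^4+(a-b)^4=2a^4+12a^2b^2+2b^4$. Applying it with $a=x_j$, $b=\lambda y_i$ and dividing by $12\lambda^2$ isolates the wanted term $x_j^2y_i^2$ at the cost of a pure $x_j^4$ term and a pure $y_i^4$ term. Summing over $j=1,2,3$ with fibre-dependent nonzero scalars $\lambda_j^{(i)}$ gives, on the $i$-th fibre,
\[
\sum_{j=1}^3\frac{1}{12(\lambda_j^{(i)})^2}\Big[(x_j+\lambda_j^{(i)}y_i)^4+(x_j-\lambda_j^{(i)}y_i)^4\Big]
= q\,y_i^2+\frac{y_i^4}{6}\sum_{j=1}^3(\lambda_j^{(i)})^2+\frac16\sum_{j=1}^3\frac{x_j^4}{(\lambda_j^{(i)})^2}.
\]
This uses exactly six fourth powers per fibre, hence $6n$ in total, and the two error terms are a multiple of $y_i^4$ and a pure quartic in the $x_j$.

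The heart of the argument is to kill both errors by a good choice of the $6n$ parameters $\lambda_j^{(i)}\in\mathbb C\setminus\{0\}$. The $y_i^4$ term vanishes on every fibre provided $\sum_{j=1}^3(\lambda_j^{(i)})^2=0$; summing the fibre identities over $i$, the residual pure $x$-quartic is $\frac16\sum_{j=1}^3 x_j^4\big(\sum_{i=1}^n(\lambda_j^{(i)})^{-2}\big)$, which vanishes provided $\sum_{i=1}^n(\lambda_j^{(i)})^{-2}=0$ for each $j$. I would produce such parameters in separated (product) form $\lambda_j^{(i)}=\rho_i\sigma_j$: choosing $\sigma=(1,1,\sqrt{-2})$ forces $\sum_j\sigma_j^2=0$, so the first family of conditions holds for every $i$; choosing nonzero $\rho_i$ with $\sum_{i=1}^n\rho_i^{-2}=0$ (possible for $n\ge2$, e.g. $\rho_i^{-2}=\zeta^i$ for a primitive $n$-th root of unity $\zeta$, since $\sum_{i=1}^n\zeta^i=0$) forces the second family. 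With these choices
\[
F=\sum_{i=1}^n\sum_{j=1}^3\frac{1}{12(\lambda_j^{(i)})^2}\Big[(x_j+\lambda_j^{(i)}y_i)^4+(x_j-\lambda_j^{(i)}y_i)^4\Big]
\]
is a sum of $6n$ fourth powers, so $\rk F\le 6n$.

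The main obstacle is not any single computation but the simultaneous satisfaction of the two families of conditions on the $\lambda_j^{(i)}$; the idea that unlocks the proof is the separation $\lambda_j^{(i)}=\rho_i\sigma_j$, which decouples the per-fibre constraint $\sum_j\sigma_j^2=0$ from the cross-fibre constraint $\sum_i\rho_i^{-2}=0$. A more structural alternative would be to use Lemma \ref{Lem:PerpF(x)G(y)} and construct an apolar set of $6n$ points on the $n$ fibres $\langle X_1,X_2,X_3,Y_i\rangle$, each carrying a minimal apolar set of the cubic $(x_1^2+x_2^2+x_3^2)y_i$ (which has rank $6$), but one must then reconcile the fibres with the linking relations $Y_1^2-Y_i^2\in F^\perp$; the explicit decomposition above accomplishes exactly this compatibility, so I would follow it.
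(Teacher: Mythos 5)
Your proof is correct, and it takes a genuinely different route from the paper. The paper never writes down a decomposition: it exhibits an explicit ideal
\[
I(\mathbb{X})=(\{Y_iY_j\}_{1\leq i<j\leq n},\,X_1X_2,\,X_1X_3,\,X_2^2-X_3^2,\,X_1^2-X_2^2+Y_1^2+\cdots+Y_{n-1}^2-(n-1)Y_n^2),
\]
checks via Lemma \ref{Lem:PerpF(x)G(y)} that it lies in $F^{\perp}$, computes its primary decomposition to see that it defines $6n$ reduced points, and concludes by the Apolarity Lemma. You instead verify a raw polynomial identity built from $(a+b)^4+(a-b)^4=2a^4+12a^2b^2+2b^4$, with the separated parameters $\lambda_j^{(i)}=\rho_i\sigma_j$ decoupling the per-fibre constraint $\sum_j\sigma_j^2=0$ from the cross-fibre constraint $\sum_i\rho_i^{-2}=0$; all steps check out, and your $6n$ linear forms are even pairwise non-proportional. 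The resulting apolar set is also genuinely different: your points each have exactly one nonzero $X$-coordinate and one nonzero $Y$-coordinate, whereas the paper's configuration includes, on each fibre, four points with two nonzero $X$-coordinates. What each approach buys: the paper's construction stays inside its apolarity framework and produces a point set whose structure (points on the $n$ fibres through $V(Y_1,\ldots,Y_n)$, ideal generated in low degree) matches the structural analysis running through the rest of the paper; your argument is more elementary and, notably, more general --- since for any $m\geq 2$ one can choose nonzero $\sigma_1,\ldots,\sigma_m$ with $\sum_j\sigma_j^2=0$, the same identity proves $\rk\,(x_1^2+\cdots+x_m^2)(y_1^2+\cdots+y_n^2)\leq 2mn$ for all $m,n\geq 2$ in one stroke, subsuming both this proposition and Proposition \ref{prop:generalUpperBound}, which the paper obtains by splitting $m$ into even and odd cases.
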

\begin{proof}

    Consider the ideal \[I(\mathbb{X}) =(\{Y_iY_j\}_{1 \leq i<j\leq n}, X_1X_2, X_1X_3, X_2^2-X_3^2, X_1^2-X_2^2+Y_1^2+ \cdots Y_{n-1}^2-(n-1)Y_n^2 ). \]
    Note that $I(\mathbb{X}) \subset F^{\perp}$ by Lemma \ref{Lem:PerpF(x)G(y)} and that 
   {\small \[I(\mathbb{X}) = \left(\bigcap_{i=1}^n(Y_1,\ldots, \hat Y_i, \ldots Y_n, X_1, X_2^2-X_3^2, -X_2^2+Y_i^2) \right) \cap \left(\bigcap_{i=1}^n(Y_1,\ldots, \hat Y_i, \ldots Y_n, X_2, X_3, X_1^2+Y_i^2) \right). \]}
   Thus,  $\mathbb{X}$ is an ideal of $6n$ points apolar to $F$.
   Hence, by the Apolarity Lemma, the result is proved.
\end{proof}

\begin{proposition}\label{prop:generalUpperBound}
    Let $m,n \geq 2$. If we let $F=(x_1^2+\cdots+x_m^2)(y_1^2+ \cdots+y_n^2)$, then $\rk F \leq 2mn.$
\end{proposition}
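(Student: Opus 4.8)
The plan is to prove the upper bound by a \emph{block decomposition} of the quadric $\sum_{k=1}^m x_k^2$ combined with the subadditivity of Waring rank. Recall that Waring rank is subadditive: if $G=\sum_i L_i^d$ and $H=\sum_j M_j^d$ are Waring decompositions, then concatenating them gives $G+H=\sum_i L_i^d+\sum_j M_j^d$, whence $\rk(G+H)\leq \rk G+\rk H$. The point is that the linear forms produced this way already live in the full ambient ring $S=\mathbb{C}[x_1,\ldots,x_m,y_1,\ldots,y_n]$, so no change of ambient ring is needed and the bound will be genuinely intrinsic.

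First I would partition the index set $\{1,\ldots,m\}$ into blocks: if $m$ is even, into $m/2$ blocks of size two; if $m$ is odd (so $m\geq 3$), into $(m-3)/2$ blocks of size two together with a single block of size three. Since distinct blocks involve disjoint $x$-variables, distributing yields
\[
F=\left(\sum_{k=1}^m x_k^2\right)\left(\sum_{j=1}^n y_j^2\right)=\sum_{B}\left(\sum_{k\in B}x_k^2\right)\left(\sum_{j=1}^n y_j^2\right),
\]
the sum running over the blocks $B$. Next I would bound each summand separately. For a size-two block $B=\{a,b\}$, relabeling $x_a,x_b$ as $x_1,x_2$ identifies $\bigl(\sum_{k\in B}x_k^2\bigr)\bigl(\sum_j y_j^2\bigr)$ with the form of Theorem \ref{thm:B}, whose rank is $4n$, and the corresponding decomposition supplies $4n$ linear forms in $S$. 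For the single size-three block in the odd case, relabeling identifies the summand with the form of Proposition \ref{Prop:RKm=3}, furnishing a decomposition with at most $6n$ summands. Concatenating everything gives a Waring decomposition of $F$ with at most $(m/2)\cdot 4n=2mn$ terms when $m$ is even, and at most $\tfrac{m-3}{2}\cdot 4n+6n=2mn$ terms when $m$ is odd; in both cases $\rk F\leq 2mn$.

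The only genuine subtlety is the parity of $m$, and this is the step that does the real work. A naive pairing of the variables leaves, for odd $m$, a leftover single square $x_m^2$, and $\rk\bigl(x_m^2\sum_j y_j^2\bigr)=3n$ by \cite[Proposition 4.4]{CCCGW18}, which would only give the weaker bound $\tfrac{m-1}{2}\cdot 4n+3n=(2m+1)n$. The fix is to absorb the leftover variable into a size-three block and invoke Proposition \ref{Prop:RKm=3}: using $6n$ rather than $4n+3n=7n$ for the triple is exactly what keeps the total at $2mn$. Everything else is bookkeeping, so I expect no further obstacle beyond verifying that the two parity counts both collapse to $2mn$.
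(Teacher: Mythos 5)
Your proof is correct and follows essentially the same route as the paper: the paper also splits the quadric $\sum_k x_k^2$ into pairs (for even $m$) or pairs plus one triple (for odd $m$), bounds each summand by the $4n$ result of Proposition \ref{Prop:RankOfForms} and the $6n$ bound of Proposition \ref{Prop:RKm=3}, and concludes by subadditivity, with exactly the same arithmetic $\tfrac{m-3}{2}\cdot 4n+6n=2mn$ in the odd case. Your remark on why a leftover single variable (costing $3n$) must be absorbed into a triple is precisely the point of the paper's Case (ii).
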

 \begin{proof} Let $q =y_1^2+ \cdots+y_n^2.$ We divide the proof in two cases: $m$ even and $m$ odd.

 \begin{description}
     \item[Case (i)] $m =2k$ for some $k \in \mathbb{N}.$ Then we write
     \begin{equation*}
         \begin{split}
            (x_1^2+\cdots+x_m^2)q & = (x_1^2+x_2^2)q +\cdots+(x_{2k-1}^2+x_{2k}^2)q.
         \end{split}
     \end{equation*}
     Therefore by Proposition \ref{Prop:RankOfForms}, we get
     \[\rk F \leq 4nk = 2n \cdot 2k = 2mn.\]
     \item[Case (ii)] $m =2k+1$ for some $k \in \mathbb{N}.$
     Then we write
     \begin{equation*}
         \begin{split}
            (x_1^2+\cdots+x_m^2)q & = (x_1^2+ \cdots+x_{2k-2}^2)q+(x_{2k-1}^2+x_{2k}^2+x_{2k+1}^2)q.
         \end{split}
     \end{equation*}
 \end{description}
    
     Therefore, by case (i) and  Proposition \ref{Prop:RKm=3}, we get
     \[\rk F \leq 2(2k-2)n +6n = 2n \cdot (2k+1) = 2mn. \]
     The proof is completed.
 \end{proof}

\begin{theorem}\label{Thm:RankOfProductofSquares}
    Let $n \geq m \geq 3$. If we let $F=(x_1^2+\cdots+x_m^2)(y_1^2+ \cdots+y_n^2)$,  then \[n(m+2) < \rk F \leq 2mn.\] 
\end{theorem}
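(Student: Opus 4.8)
The upper bound $\rk F\le 2mn$ is exactly Proposition \ref{prop:generalUpperBound}, so everything reduces to the strict lower bound $\rk F>n(m+2)$. The plan is to first obtain $\rk F\ge n(m+2)$ from Theorem \ref{Thm:Ecomputability}, and then to exclude equality by a fibration argument that only succeeds because $m\ge 3$.

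For the inequality $\rk F\ge n(m+2)$ I would apply Theorem \ref{Thm:Ecomputability} with $I=(Y_1,\dots,Y_n)$, $e=1$, and $t$ a general linear form in $\langle Y_1,\dots,Y_n\rangle$; such a $t$ is a non-zerodivisor modulo $I(\mathbb{X})\colon I$ for any minimal apolar set $\mathbb{X}$, since the points of $\mathbb{X}$ off the center $V(Y_1,\dots,Y_n)$ have nonzero $Y$-part (Lemma \ref{Lem:NonZeroCoordinatesAfterRotation}). The heart of the matter is to identify $F^{\perp}\colon(Y_1,\dots,Y_n)$. Since $g\in F^{\perp}\colon I$ if and only if $g\circ(i\circ F)=0$ for all $i\in I$, and $Y_j\circ F=2(x_1^2+\cdots+x_m^2)\,y_j$, this colon ideal is the annihilator of the inverse system generated by the $n$ cubics $(x_1^2+\cdots+x_m^2)y_j$. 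Differentiating these degree by degree, the quotient $T/\big(F^{\perp}\colon(Y_1,\dots,Y_n)\big)$ is Artinian with Hilbert function $1,\ m+n,\ mn+1,\ n$ in degrees $0,1,2,3$. Cutting this algebra by the purely-$Y$ form $t$ and summing the resulting Hilbert function gives exactly $n(m+2)$ — the purely-$Y$ choice of $t$ being what maximizes the sum — so Theorem \ref{Thm:Ecomputability} yields $\rk F\ge n(m+2)$.

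To upgrade to a strict inequality I would assume $\rk F=n(m+2)$ and derive a contradiction. Equality means $F$ is $1$-computable through $I=(Y_1,\dots,Y_n)$, so by Remark \ref{Remark:eComputability&ForbiddenLocus} the center $V(Y_1,\dots,Y_n)$ lies in $\mathcal{F}_F$ and hence meets $\mathbb{X}$ in no point. Next, exactly as in Proposition \ref{prop:projfromline}(i), Lemma \ref{Lem:UpperboundHF} (with $g=Y_1$, $V=\langle Y_2,\dots,Y_n\rangle$) together with the Grassmann formula forces $\HF(\mathbb{Y},2)\le \rk F-mn-n=n$, so the projection $\mathbb{Y}=\{Q_1,\dots,Q_n\}$ of $\mathbb{X}$ from $V(Y_1,\dots,Y_n)$ onto $V(X_1,\dots,X_m)$ consists of exactly $n$ points in linear general position. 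With $\mathbb{Y}$ in general position the colon argument of Proposition \ref{prop:projfromline}(ii) (via \cite[Lemma 3.2]{CCCGW18}) goes through for general $m$: for each $i$ there is a linear form $\ell_i\in\langle Y_1,\dots,Y_n\rangle$ vanishing on the center and on the $Q_j$ with $j\ne i$, and $\mathbb{X}\cap\Lambda_i$, with $\Lambda_i=\langle V(Y_1,\dots,Y_n),Q_i\rangle$, is apolar to $(x_1^2+\cdots+x_m^2)\ell_i$, which after a coordinate change is $(x_1^2+\cdots+x_m^2)z$.

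The decisive input is the auxiliary estimate $\rk\big((x_1^2+\cdots+x_m^2)z\big)\ge 2m$, valid precisely for $m\ge 3$. I would prove it by a second application of Theorem \ref{Thm:Ecomputability}, now with $I=(X_1,\dots,X_m)$ and a general $X$-linear form: the colon $\big((x_1^2+\cdots+x_m^2)z\big)^{\perp}\colon(X_1,\dots,X_m)$ is the annihilator of the inverse system of $\{x_iz\}$, with Hilbert function $1,\ m+1,\ m$, and cutting by the chosen form sums to $2m$. Since the planes $\Lambda_i$ pairwise meet only along the center, which contains no point of $\mathbb{X}$, the fibers $\mathbb{X}\cap\Lambda_i$ are disjoint, and hence
\[
n(m+2)=|\mathbb{X}|=\sum_{i=1}^{n}\bigl|\mathbb{X}\cap\Lambda_i\bigr|\ \ge\ n\cdot 2m\ =\ 2mn,
\]
which is impossible because $2mn-n(m+2)=n(m-2)>0$ for $m\ge 3$. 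This contradiction gives $\rk F>n(m+2)$. The main obstacle, and the only place where the hypothesis $m\ge 3$ enters, is the auxiliary bound $\rk\big((x_1^2+\cdots+x_m^2)z\big)\ge 2m$ together with the verification that the fibration over $\mathbb{Y}$ is honest (general position of $\mathbb{Y}$ and emptiness of $\mathbb{X}$ on the center); for $m\le 2$ the corresponding fiber rank is only $m+2$, the displayed inequality degenerates to an equality, and indeed $\rk F=n(m+2)$ in those cases.
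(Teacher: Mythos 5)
Your proof is correct and is essentially the paper's own argument: the upper bound is Proposition \ref{prop:generalUpperBound}; the bound $\rk F\geq n(m+2)$ comes from the same e-computability computation with $I=(Y_1,\ldots,Y_n)$ and a $Y$-linear form $t$; and equality is excluded exactly as in the paper --- assuming $\rk F=n(m+2)$ makes $F$ $1$-computable so the center $V(Y_1,\ldots,Y_n)$ is forbidden, the projection $\mathbb{Y}$ is forced (Lemma \ref{Lem:UpperboundHF} plus Grassmann, as in Proposition \ref{prop:projfromline}(i)) to be $n$ points in general position, and each of the $n$ spans $\Lambda_i=\langle V(Y_1,\ldots,Y_n),Q_i\rangle$ then carries at least $\rk\bigl((x_1^2+\cdots+x_m^2)z\bigr)\geq 2m$ points of $\mathbb{X}$, giving $|\mathbb{X}|\geq 2mn>n(m+2)$ for $m\geq 3$. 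The only substantive difference is that you prove the fiber bound $\rk\bigl((x_1^2+\cdots+x_m^2)z\bigr)\geq 2m$ by a second application of Theorem \ref{Thm:Ecomputability}, whereas the paper simply asserts the exact value $2m$ (it follows from its Proposition \ref{Prop:RankOfForms}); incidentally, your Hilbert function $1,\ m+n,\ mn+1,\ n$ for $T/(F^{\perp}\colon I)$ is the correct one --- the degree-two entry $m(n-1)$ in the paper's table is off by one from the value $m(n-1)+1$ needed for the stated sum $n(m+2)$ --- and your only slip, the aside that the bound $\geq 2m$ is ``valid precisely for $m\geq 3$'' (it also holds for $m\leq 2$; what fails there is $2m>m+2$), does not affect the argument.
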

 \begin{proof}
     Note that the upper bound comes from Proposition \ref{prop:generalUpperBound}, thus we only need to prove the lower bound.
     Using Lemma \ref{Lem:PerpF(x)G(y)} we get
  {\small \[ F^{\perp} = (\{X_iX_j\}_{1\leq i<j \leq m}, \, \{X_1^2 -X_i^2\}_{2 \leq i \leq m},\, \{Y_iY_j\}_{1\leq i < j \leq n}, \, \{Y_1^2-Y_i^2\}_{2 \leq i \leq n}).\]}
    Set $I = (Y_1, \ldots,Y_n)$, $t= \alpha_1Y_1 + \cdots + \alpha_n Y_n$, and $J = (F^{\perp} : I) + (t)$. Thus we get
 $$
        J = (\{X_iX_j\}_{1\leq i<j \leq m}, \, \{X_1^2 -X_i^2\}_{2 \leq i \leq m}, \{Y_iY_j\}_{1\leq i < j \leq n},Y_i^2\}_{1 \leq i \leq n}) +(t).
$$
Computing we get
\[
\begin{array}{r|c|c|c|c|c}
    i & 0 & 1 & 2 & 3 &4  \\ \hline
    \HF(T/J, i) & 1 & m+n-1  & m(n-1) & n-1 & 0
\end{array}
\]

\vspace{.5 em}
and thus $\sum_{i \geq 0} \HF(T/J, i) = n(m+2).$
Hence, by \cite[Theorem 3.3]{CCCGW18}, we get  \[\rk F \geq \sum_{i \geq 0} \HF(T/J, i) =n(m+2).\]

We now prove by contradiction that $\rk F >n(m+2).$  We assume that $\rk F = n(m+2)$ and we let $\mathbb{X}$ being a minimal apolar subset of $F$. Note that $F$ is $e$-computable using $I= (Y_1, \ldots, Y_n)$ and $t = \sum_{i=1}^n 
\alpha_i Y_i,$. Thus Lemma  \ref{Lemma:eComputability&ForbiddenLocus} yields $V(Y_1, Y_2, \ldots, Y_n) \subseteq \mathcal{F}_F.$

\noindent Let $V=F^{\perp}_2 \cap \mathbb{C}[Y_1,\ldots, Y_n]_2$ and note that $\dim V = \binom{n}{2} + n-1.$ If we let $\dim V/I(\mathbb{X}) = \dim V - \alpha,$ then $\dim I(\mathbb{X})_2 \cap \mathbb{C}[Y_1,\ldots, Y_n]_2 = \alpha.$ 
Thus, by Lemma \ref{Lem:UpperboundHF}, we have 
\[\HF(\mathbb{X}, 2) \leq n(m+2) - \dim V/I(\mathbb{X}).\]
Therefore 
\[\dim I(\mathbb{X})_2 \geq \binom{n+m+1}{2} -n(m+2) + \dim V/I(\mathbb{X}). \]
Applying Grassmann's formula, we get 
{\small
\begin{equation}\label{EQ:alphaGEQ1}
    \begin{split}
        \alpha=\dim I(\mathbb{X})_2 \cap \mathbb{C}[Y_1,\ldots, Y_n]_2 & \geq \dim I(\mathbb{X})_2 + \dim V - \dim (V + I(\mathbb{X})_2) \\
                 & \geq \dim I(\mathbb{X})_2 + \dim V - \dim F^{\perp}_2 \\
           & \geq \binom{n+m+1}{2} -n(m+2) + \dim V/I(\mathbb{X})  +\dim V - \dim F^{\perp}_2 \\
         & \geq \binom{n}{2}.
    \end{split}
\end{equation}}
and thus $\alpha\geq \binom{n}{2}$.

We now show that $\alpha= \binom{n}{2}$. Note that $(X_1, X_2, \ldots, X_m) + I(\mathbb{X}) \cap \mathbb{C}[Y_1, \ldots, Y_n]$ is the ideal of the projection of $\mathbb{X}$ onto $V((X_1, X_2, \ldots, X_m)$; we call this projection $\mathbb{Y}$.
Since $\HF(\mathbb{Y}, 1) = n$, we get $\HF(\mathbb{Y}, 2) \geq n$.
Thus
$$\HF(\mathbb{Y}, 2) = \binom{n+1}{2} - \dim I(\mathbb{X})_2 \cap \mathbb{C}[Y_1, \ldots, Y_n]_2 \geq n$$
and
\begin{equation}\label{EQ:alphaGEQ2}
    \alpha  \leq \binom{n+1}{2}  - n = \binom{n}{2}.
\end{equation}
Therefore by Equation \eqref{EQ:alphaGEQ1}, \eqref{EQ:alphaGEQ2}, we have 
\[\dim I(\mathbb{X})_2 \cap \mathbb{C}[Y_1, \ldots, Y_n]_2 = \binom{n}{2}.\]
Therefore, $\HF(\mathbb{Y}, 2) = n$ and thus $I(\mathbb{Y})$ is an ideal of $n$ points. Therefore the apolar set $\mathbb{X}$ is contained in $n$ $m$-dimensional linear spaces each passing through  $V(Y_1,\ldots,Y_n)$.

    \noindent Moreover,
    \begin{equation*}
        \begin{split}
            F^{\perp} : (\alpha_1 Y_1 +  \cdots + \alpha_n Y_n) & = ((\alpha_1 Y_1 +  \cdots + \alpha_n Y_n) \circ F)^{\perp}\\
                & =\left((x_1^2+x_2^2+ \cdots x_m^2)(\alpha_1 y_1 +  \cdots + \alpha_n y_n)\right)^\perp
        \end{split}
    \end{equation*}
    where the first equality holds by \cite[Lemma 3.2]{CCCGW18}. Note that $G = (x_1^2+x_2^2+ \cdots +x_m^2)(\alpha_1 y_1 +  \cdots + \alpha_n y_n)$ is such that $\rk G = 2m.$ Since $V(Y_1, Y_2, \ldots, Y_n) \subseteq \mathcal{F}_F,$ $\mathbb{X}$ contains $2mn$ distinct points. Hence a contradiction since $2mn>n(m+2)$.
 \end{proof}

{
\begin{remark}
    Our conjecture is that the form $F = (x_1^2+\cdots+x_m^2)(y_1^2+ \cdots+y_n^2)$ for $m,n \geq 2$ has the Waring rank $\rk F = 2mn.$ For $m=2$ and any $n,$ the Waring rank is $4n.$ If the conjecture turns out to be true, then by using Lemma \ref{Lem:UpperboundHF} and using  similar ideas as in the proofs of Theorem \ref{thm:A} and Theorem \ref{thm:B}, we can get the ideal of minimal apolar sets and their Hilbert functions.
\end{remark}
}

\section{Cactus rank, border rank, and VSP}
In this section, we collect results about the border rank and the cactus rank of our forms, for example see \cite{BBM14, BK13} for some general facts about these ranks. 
We recall that, given $F \in S_d$, the cactus rank of $F$ is
\[
\Cr F = \min\{{\text{length}(\mathbb{X}) }: \mathbb{X} \subset \mathbb{P}(T_1) \text{ with } \dim \mathbb{X} = 0, I(\mathbb{X}) \subseteq F^{\perp} \},
\]
and the border rank $\brk F$ is the smallest $r$ such that $F$ is the limit of polynomials of Waring rank $r.$
We also investigate the variety of sum of powers (see \cite{RS00}) of a given form $F$ with respect to some non-negative integer $r$, namely
\[
VSP(F,r)=\lbrace \mathbb{X}\in\mathcal{H}{ilb}_r(\mathbb{P}^n) : I(\mathbb{X})\subseteq F^\perp\rbrace,
\]
where $\mathcal{H}{ilb}_r(\mathbb{P}^n)$ denotes the Hilbert scheme of length $r$ zero-dimensional subschemes of $\mathbb{P}^n$.
We begin investigating the forms $x^2(y_1^2+\ldots+y_n^2)$.

\begin{lemma}\label{Lem:CactusBorderRankm=1}
    If we let $F=x^2(y_1^2+\cdots+y_n^2)$ for $n\geq 2$, then $\Cr F= \brk F=n+2.$
\end{lemma}
\begin{proof}
    By Lemma \ref{Lem:PerpF(x)G(y)} we have $\HF(T/F^{\perp}, 2) = n+2.$ Hence $\Cr  F \geq n+2$ and $\brk F \geq n+2.$ We now obtain an upper bound for the cactus rank of $F.$ 
    Consider the ideal \[I = (\{Y_iY_j\}_{1\leq i<j \leq n}, \{Y_1^2-Y_i^2\}_{2 \leq i \leq n}).\]
    By Lemma \ref{Lem:PerpF(x)G(y)}  we have $I\subseteq F^{\perp}$. 
    
    We now show that $I$ is a saturated ideal. Let $g \in T$ be such that there exists $l \in \mathbb{N}$ for which $X^lg \in I$. Thus $\bar{X^l}\bar{g} = 0 \in T/I.$
    Since $\bar{X}$ is a non-zerodivisor in $T/I$ we get $\bar{g} = 0.$ Thus, $g \in I.$ Therefore, $I$ is a saturated ideal.
    The ideal $I$ defines a smoothable scheme by \cite[Proposition 2.10]{CN09}. Therefore,
    $I$ defines a zero-dimensional smoothable scheme of length $n+2$.
    Since the smoothable rank is an upper bound on both the cactus rank and the border rank (see \cite[Section 2.1]{BB15} or \cite[Lemma 5.17]{IK99}),  we get that $\Cr F \leq n+2$, $\brk F \leq n+2$. Hence, the equality holds. 
\end{proof}

\begin{proposition}\label{Prop:VSP(F,3n)}
    If we let $F=x^2(y_1^2+\cdots+y_n^2)$ for $n \geq 2$, then $\dim VSP(F,3n)=2n-1+\binom{n}{2}$.    
\end{proposition}
\begin{proof}
    We define the following incidence correspondence
\[
VSP(F,3n)\times O(n)\supseteq\Sigma=\lbrace (\mathbb{X},\sigma) : \pi(\sigma(\mathbb{X}))=\lbrace E_1,\ldots,E_n\rbrace \rbrace
\]
where $\pi$ is the projection from $\mathbb{P}^n$ onto the hyperplane $V(X)$ and the points $E_i$ are the coordinate points in it.

Consider the projection map
\[
\Psi: \Sigma \longrightarrow VSP(F,3n).
\]
Note that, by Proposition \ref{Prop:ShapeofIdealMonomialproductSumofSquares} $\Psi$ is a surjective map having fibers isomorphic to the symmetric group over $n$ elements. Thus, $\dim \Sigma=\dim VSP(F,3n)$.

We \emph{claim} that there exists a rational map
\[
\Phi: \Sigma \DashedArrow[->,densely dashed    ] \mathbb{A}^{2n-1}
\]
such that $\Phi$ is dominant and its fibers are isomorphic to $O(n)$. Hence, $\dim \Sigma=2n-1+\binom{n}{2}$ and the desired result follows.
\begin{proof}[{ Proof of the claim.}] Given a general element $\mathbb{X}\in VSP(F,3n)$, by Proposition \ref{Prop:ShapeofIdealMonomialproductSumofSquares} and using its notation, there exists a $\sigma\in O(n)$ such that $I(\sigma(\mathbb{X}))$ is completely determined by the linear forms $L_i$. Using the degree two generators of $I(\sigma(\mathbb{X}))$ we can assume the linear forms to be
\[
L_i=a_iX+b_{i1}Y_1+b_{ii}Y_i
\]
for $2\leq i\leq n$. Working out the computations, a degree three generator of $I(\sigma(\mathbb{X}))$ can be assumed to be
\[
X^3+\left(\sum_{i=2}^n a_i\right)XY_1^2-\sum_{i=2}^n a_i XY_i^2+\left(\sum_{i=2}^n b_{i1}\right)Y_1^3+\sum_{i=2}^n b_{ii}Y_i^3
\]
and thus we define $\Phi(\mathbb{X},\sigma)=(a_2,\ldots,a_n,\sum_{i=2}^n b_{i1},b_{22},\ldots,b_{nn})$.

To study the map $\Phi$ we use Theorem A as follows. We consider $\mathbb{X}'=\sigma(\mathbb{X})$ which we know is supported on the lines $l_i=\langle P,E_i\rangle$ and we note that $\mathbb{X}'\cap l_i= V(F_i(\alpha))\cap l_i$ for the degree three polynomials
\[
F_1(\alpha)= X^3+ \sum_{i=2}^n a_iXY_1^2 + \sum_{i=2}^n b_{i1}Y_1^3 \mbox{ and } F_i(\alpha)=X^3-a_iXY_i^2+b_{ii}Y_i^3 \mbox{ for } i\geq 2,
\]
where $\alpha=\Phi(\mathbb{X},\sigma)=(a_2,\ldots,a_n,\sum_{i=2}^n b_{i1},b_{22},\ldots,b_{nn})$.
Since $\mathbb{X}'$ is a reduced set of $3n$ points each polynomial $F_i$ avoids the relevant discriminant locus. Thus, $\Phi$ is a dominant map.

Given $\alpha\in\mbox{Im}\, \Phi$ we have that
\[
\Phi^{-1}(\alpha)=\lbrace (\sigma(\mathbb{X}_\alpha),\sigma^{-1}): \sigma \in O(n) \rbrace
\]
where $\mathbb{X}_\alpha$ is the reduced set of $3n$ points $\bigcup_{i= 1}^n V(F_i(\alpha))\cap l_i$. Hence, $\Phi^{-1}(\alpha)$ is isomorphic to $O(n)$.

\end{proof}
The proof of the proposition is completed.
\end{proof}
We now investigate the form $F=(x_1^2+ x_2^2)(y_1^2+\cdots+y_n^2)$ for $n\geq 2$.  In the rest of the section, we present the cactus rank and the border rank of $F$, and compute the dimension of $VSP(F, 4n).$
\begin{lemma}
    If we let $F=(x_1^2+ x_2^2)(y_1^2+\cdots+y_n^2)$ for $n\geq 2$, then \begin{align*}
        \Cr F=2n+4 {\text{ and  } \, 2n+2 \leq \brk F \leq 2n+4.}
    \end{align*} 
\end{lemma}
\begin{proof}
    Since \begin{align*}
        F & = x_1^2(y_1^2+\cdots+y_n^2)+  x_2^2(y_1^2+\cdots+y_n^2),
    \end{align*}
    by Lemma \ref{Lem:CactusBorderRankm=1} we obtain 
    \begin{align}\label{EQ:CactusUB}
        \Cr F \leq 2n+4, \, \text{and}\, \brk F \leq 2n+4.
    \end{align}
    By Lemma \ref{Lem:PerpF(x)G(y)} we have that $F^{\perp}$ is generated in degree 2, and length $\ell(T/F^{\perp}) = 4n+8.$  Hence by \cite[Corollary 1]{RS11} we obtain 
    \begin{align}\label{EQ:CactusLB}
        \Cr F \geq \frac{1}{2}(4n+8) = 2n+4.
    \end{align}
    Hence $\Cr F =2n+4.$
Moreover, Lemma \ref{Lem:PerpF(x)G(y)} also yields that  $\HF(T/F^{\perp}, 2 ) = 2n+2.$ Hence a lower bound on the border rank is proved, and this completes the proof of the lemma. 
\end{proof}

\begin{proposition}
    If we let $F=(x_1^2+x_2^2)(y_1^2+\cdots+y_n^2)$ for $n \geq 2$, then $\dim VSP(F,4n)= 2n-2+\binom{n}{2}$.    
\end{proposition}
\begin{proof} This proof is similar to the proof of Proposition \ref{Prop:VSP(F,3n)}.
    We consider the  incidence correspondence
\[
VSP(F,4n)\times O(n)\supseteq\Sigma=\lbrace (\mathbb{X},\sigma) : \pi(\sigma(\mathbb{X}))=\lbrace E_1,\ldots,E_n\rbrace \rbrace
\]
where $\pi$ is the projection $\mathbb{P}^{n+1} \to V(X_1, X_2)$.

We consider the map
\[
\Psi: \Sigma \longrightarrow VSP(F,4n).
\]
which is surjective with $0$-dimensional fibers. Thus, $\dim \Sigma=\dim VSP(F,4n)$.

We \emph{claim} that there exists a rational map
\[
\Phi: \Sigma \DashedArrow[->,densely dashed    ] \mathbb{A}^{2n-2}
\]
such that $\Phi$ is dominant and its fibers are isomorphic to $O(n)$. Hence, $\dim \Sigma= 2n-2+\binom{n}{2}$ and the desired result follows.
\begin{proof}[{ Proof of the claim.}]
Given an element $\mathbb{X}\in VSP(F,4n)$, by Proposition \ref{Prop:structureOfIdealX1X2} there exists a $\sigma\in O(n)$ such that $I(\sigma(\mathbb{X}))$ is completely determined by $\alpha_i, \beta_i$ for $2 \leq i \leq n$. Thus we define
\[
\Phi(\mathbb{X}, \sigma) = (\alpha_2, \ldots, \alpha_n,\beta_2, \ldots, \beta_n).
\]
We consider $\mathbb{X}'=\sigma(\mathbb{X})$ which we know is supported on the plane $l_i=\langle \ell,E_i\rangle$ where $\ell = V(Y_1, \ldots, Y_n)$ and we note that $\mathbb{X}'\cap l_i$ is a set of 4 points described by two degree 2 polynomials $\mathcal{Q}_1 = X_1^2 - \sum_{i=2}^n \alpha_i (Y_1^2-Y_i^2)$ and  $\mathcal{Q}_2 = X_2^2 - \sum_{i=2}^n \beta_i (Y_1^2-Y_i^2).$ Since $\mathbb{X}'$ is a reduced set of $4n$ points, we obtain $\alpha_i \neq 0,$ $\beta_i \neq 0$ for all $2 \leq i \leq n$ and $\sum_{i=2}^n \alpha_i \neq 0,$ $\sum_{i=2}^n \beta_i \neq 0.$ Hence $\Phi$ is a dominant map.

Given $\alpha\in\mbox{Im} \,\Phi$ we have that
\[
\Phi^{-1}(\alpha)=\lbrace (\sigma(\mathbb{X}_\alpha),\sigma^{-1}): \sigma \in O(n) \rbrace
\]
where $\mathbb{X}_\alpha$ is the reduced set of $4n$ points $\bigcup_{i=1}^n V(\mathcal{Q}_1, \mathcal{Q}_2)\cap l_i$. Hence, $\Phi^{-1}(\alpha)$ is isomorphic to $O(n)$.

\end{proof}
The proof is completed.
\end{proof}
\bibliographystyle{abbrv}
\bibliography{main}

\end{document}